\newcommand{\sfA}{\mathsf{A}}
\title
{
	Local well-posedness for dispersive equations with bounded data
} 
\author{Jason Zhao}
\address{Department of Mathematics, University of California, Berkeley, 94720}
\email{zhao.j@berkeley.edu}
\date{\today}
\begin{document}

\begin{abstract}
	Given sufficiently regular data \textit{without} decay assumptions at infinity, we prove local well-posedness for non-linear dispersive equations of the form 
		\[
			\partial_t u + \mathsf A(\nabla) u + \mathcal Q(|u|^2) \nabla u= \mathcal N (u, \overline u),
		\]
	where $\mathsf A(\nabla)$ is a Fourier multiplier with purely imaginary symbol of order $\sigma + 1$ for $\sigma > 0$, and polynomial-type non-linearities $\mathcal Q(|u|^2)$ and $\mathcal N(u, \overline u)$. Our approach revisits the classical energy method by applying it within a class of local Sobolev-type spaces $\ell^\infty_{\mathsf A(\xi)} H^s (\mathbb R^d)$ which are adapted to the dispersion relation in the sense that functions $u$ localised to dyadic frequency $|\xi| \approx N$ have size 
		\[
			||u||_{\ell^\infty_{\mathsf A(\xi)} H^s} \approx N^s \sup_{{\operatorname{diam}(Q) = N^\sigma}} ||u||_{L^2_x (Q)}.
		\]	
	In analogy with the classical $H^s$-theory, we prove $\ell^\infty_{\mathsf A(\xi)} H^s$-local well-posedness for $s > \tfrac{d}2 + 1$ for the derivative non-linear equation, and $s > \tfrac{d}2$ without the derivative non-linearity. As an application, we show that if in addition the initial data is spatially almost periodic, then the solution is also spatially almost periodic. 
\end{abstract}

\maketitle

\tableofcontents

\section{Introduction}

In this article we consider local well-posedness for non-linear dispersive equations in a class of bounded continuous functions \textit{without} decay assumptions at infinity. To illustrate the robustness of the argument while keeping the technical details at a minimum, we consider equations of the form 
	\begin{equation}\tag{NL}\label{eq:dispersive}
		\begin{split}
			\partial_t u + \sfA(\nabla) u + \cQ (|u|^2) \nabla u
				&= \cN (u, \overline u),\\
			u_{|t = 0}
				&= u_0,	
		\end{split}
	\end{equation}
where $u: [0, T] \times \R^d \to \C$ is a complex scalar field, $\cQ(|u|^2)$ and $\cN(u, \overline u)$ are polynomials which are either zero or of degree at least one, and $\sfA(\nabla)$ is a Fourier multiplier with purely imaginary symbol $\sfA(\xi) \in C^\infty (\R^d\setminus 0)$ of order $\sigma + 1$ for $\sigma > 0$, i.e. satisfying the derivative bounds 
	\begin{equation} \tag{V}\label{eq:groupvelocity}
		|\nabla_\xi^k \sfA(\xi)| 
                \lesssim_k \langle \xi \rangle^{\sigma + 1 - k},
				\qquad \text{for all integers $k \geq 0$}.
	\end{equation}			
For example, the non-linear 
Schr{\"o}dinger (NLS) and derivative non-linear Schr{\"o}dinger (dNLS) equations are within the scope of \eqref{dispersive} and assumption \eqref{groupvelocity}. After suitable modifications, our arguments also apply to real scalar field solutions $u: [0, T] \times \R^d \to \R$ to equations which preserve reality, in which case we require the symbol of the dispersion relation to satisfy $\overline{\sfA(\xi)} = \sfA(-\xi)$ and the non-linearities are instead polynomials $\cQ(u)$ and $\cN(u)$ in the variable $u$. The Korteweg-de Vries (KdV), Benjamin-Ono (BO) and intermediate long waves (ILW) equations fall within this category. 

Initial data of particular interest, and the equations they have been posed on, include 
\begin{itemize}
	\item almost periodic functions on the line $\R$, e.g.
		\[
		u_0 (x) 
			= \cos(x) + \cos(\sqrt 2 x),
		\]
		for NLS \cite{Monvel,Oh2014,Oh2015a,Schippa2024} and KdV \cite{Egorova1994, DamanikGoldstein2015,BinderEtAl2018,Tsugawa2012,Schippa2024}, and general dispersive equations \cite{Papenburg2024},

	\item functions on the plane $\R^2$ with non-zero degree at infinity, e.g.
		\[
			\lim_{r \to \infty} u_0 (r e^{i \theta}) 
				=
				e^{i n \theta}, \qquad n \in \Z, 
		\]
		for the Gross-Pitaevskii equation \cite{BethuelSmets2007}, 

	\item perturbations of a fixed background with exotic spatial asymptotics, e.g. 
		\[
			u_0 (x)
				= \operatorname{tanh}(x) + q_0 (x), \qquad \lim_{|x| \to \infty} q_0(x) = 0,
		\]
		for the KdV equation \cite{IorioEtAl1998, Gallo2005, Laurens2023}, gKdV \cite{Palacios2022}, and BO \cite{IorioEtAl1998, Gallo2005}. 
		
	\item functions with continuous bounded derivatives up to order $k$, for a regularised NLS \cite{DodsonEtAl2020}.
\end{itemize}

The main novelty of this article is in studying the initial data problem \eqref{dispersive} within a uniform local Sobolev space adapted to the dispersion relation $\ell^\infty_{\sfA(\xi)}  H^s (\R^d)$. At first approximation, one can regard our function space as embedded in the local Sobolev space $H^s_\loc (\R^d)$ and such that the projection $P_N u$ to a dyadic frequency $N \in 2^\N$ has size 
	\[
		||P_N u||_{\ell^\infty_{\sfA(\xi)}  H^s }\approx N^s \sup_{\substack{Q \subseteq \R^d \text{ cube} \\  \operatorname{diam}(Q) = N^\sigma}} ||P_N u||_{L^2_x (Q)}.
	\]
The norm above is adapted to the dispersion relation in the sense that wave packets localised to frequency $|\xi| \approx N$ travel with speed $|\nabla_\xi \sfA(\xi)| \lesssim N^\sigma$ under the linear equation, 
	\begin{equation}\label{eq:lindispersive}\tag{L}
		\begin{split}
		\partial_t u + \sfA(\nabla) u 
			&= 0,
		\end{split}
	\end{equation}
so on unit time scale $t \approx 1$ we expect each wave packet to have traveled within a cube of length $L \approx N^\sigma$. This heuristic suggests then that the linear flow approximately conserves the $\ell^\infty_{\sfA(\xi)}  H^s$-norm on unit time scales, i.e. we expect the energy estimate
	\begin{equation}\label{eq:prop1}\tag{$\star$}
		||e^{-t \sfA(\nabla)} u_0 ||_{\ell^\infty_{\sfA(\xi)}  H^s} \lesssim ||u_0||_{\ell^\infty_{\sfA(\xi)}  H^s}, \qquad t \in [0, 1].
	\end{equation}
Furthermore, the space $\ell^\infty_{\sfA(\xi)} H^s (\R^d)$ forms a Banach algebra when $s > \tfrac{d}{2}$,
	\begin{equation}\label{eq:prop2}\tag{$\star\star$}
		||uv||_{\ell^\infty_{\sfA(\xi)} H^s} \lesssim ||u||_{\ell^\infty_{\sfA(\xi)} H^s}||v||_{\ell^\infty_{\sfA(\xi)} H^s}.
	\end{equation}

Using the analogues of \eqref{prop1}-\eqref{prop2} for the usual Sobolev spaces $H^s$, it is well-known in the folklore that one can apply the abstract semigroup method of Kato \cite{Kato1993} to conclude $H^s$-local well-posedness for equations of the form \eqref{dispersive} at regularity $s > \tfrac{d}{2}$ when there is no derivative in the non-linearity, i.e. $\cQ(|u|^2) \equiv 0$, and $s > \tfrac{d}{2} + 1$ with derivative non-linearity, i.e. $\cQ(|u|^2) \not\equiv 0$. The latter case relies additionally on the energy method to overcome what one naively expects to be a derivative loss in the energy estimates, see \cite{BonaScott1976, TsutsumiFukuda1980,AbdelouhabEtAl1989} for examples on the line. Our first main result follows by adapting these arguments to the $\ell^\infty_{\sfA(\xi)} H^s$-setting,

\begin{theorem}[$\ell^\infty_{\sfA(\xi)}  H^s$-local well-posedness]\label{thm:lwp}
	The non-linear dispersive equation \eqref{dispersive} with dispersion relation obeying \eqref{groupvelocity} 
	is $\ell^\infty_{\sfA(\xi)} H^s$-locally well-posed for $s > \tfrac{d}2 + 1$ in the following sense: 
	\begin{enumerate}
		\item \label{eq:exist} existence; for initial data $u_0 \in \ell^\infty_{\sfA(\xi)} H^s (\R^d)$, there exists a solution $u \in C^0_t (\ell^\infty_{\sfA(\xi)} H^s)_x ([0, T] \times \R^d)$ to the equation \eqref{dispersive} up to a time $T := T(||u_0||_{\ell^\infty_{\sfA(\xi)} H^s})$ depending on the size of the data,
		
		\item \label{eq:unique} weak Lipschitz dependence; for solutions $u, v \in C^0_t (\ell^\infty_{\sfA(\xi)} H^s)_x([0, T] \times \R^d)$ to the equation \eqref{dispersive}, we have
			\[
				||u - v||_{C^0_t (\ell^\infty_{\sfA(\xi)} H^0)_x} \lesssim_{||u_0||_{\ell^\infty_{\sfA(\xi)}  H^s}, ||v_0||_{\ell^\infty_{\sfA(\xi)}  H^s}} ||u_0 - v_0||_{\ell^\infty_{\sfA(\xi)} H^0},
			\]

		\item \label{eq:ctsdependence}continuous dependence; given a sequence of solutions $\{u_j\}_j \subseteq  C^0_t (\ell^\infty_{\sfA(\xi)} H^s)_x ([0, T] \times \R^d)$ and a solution $u \in C^0_t (\ell^\infty_{\sfA(\xi)} H^s)_x ([0 ,T] \times \R^d)$ to \eqref{dispersive} arising respectively from initial data $\{u_{0j}\}_j \subseteq \ell^\infty_{\sfA(\xi)} H^s (\R^d)$ and $u_0 \in \ell^\infty_{\sfA(\xi)} H^s (\R^d)$, we have
		\[
			\lim_{j \to \infty}||u - u_j||_{C^0_t (\ell^\infty_{\sfA(\xi)} H^s)_x} = 0 \qquad \text{whenever } \lim_{j \to \infty} ||u_0 - u_{0j}||_{\ell^\infty_{\sfA(\xi)} H^s} = 0.
		\]
	\end{enumerate}	
	When the derivative in the non-linearity is absent, $\cQ(|u|^2) \equiv 0$, then \eqref{dispersive} is locally well-posed for $s > \tfrac{d}{2}$, and furthermore the data-to-solution map satisfies
	\begin{enumerate}\setcounter{enumi}{3}
		\item \label{eq:stronglip}strong Lipschitz dependence; for solutions $u, v \in C^0_t (\ell^\infty_{\sfA(\xi)} H^s)_x ([0, T] \times \R^d)$
			\[
				||u - v||_{C^0_t (\ell^\infty_{\sfA(\xi)} H^s)_x} \lesssim_{||u_0||_{\ell^\infty_{\sfA(\xi)} H^s}, ||v_0||_{\ell^\infty_{\sfA(\xi)} H^s}} ||u_0 - v_0||_{\ell^\infty_{\sfA(\xi)} H^s}.
			\]
	\end{enumerate}
\end{theorem}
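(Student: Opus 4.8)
The plan is to run the classical energy method of Kato within the dispersion-adapted spaces $\ell^\infty_{\sfA(\xi)} H^s$, treating the two regimes $\cQ \not\equiv 0$ (requiring $s > \tfrac d2 + 1$) and $\cQ \equiv 0$ (requiring $s > \tfrac d2$) in parallel but with the derivative non-linearity handled by a symmetrisation trick. The backbone is the pair of properties $(\star)$ and $(\star\star)$: the linear group estimate $\|e^{-t\sfA(\nabla)} u_0\|_{\ell^\infty_{\sfA(\xi)} H^s} \lesssim \|u_0\|_{\ell^\infty_{\sfA(\xi)} H^s}$ on unit time scales, which I would first upgrade to an estimate with an inhomogeneous Duhamel term, and the Banach algebra property $\|uv\|_{\ell^\infty_{\sfA(\xi)} H^s} \lesssim \|u\|_{\ell^\infty_{\sfA(\xi)} H^s}\|v\|_{\ell^\infty_{\sfA(\xi)} H^s}$ for $s > \tfrac d2$, which (together with the Moser-type estimate $\|F(u)\|_{\ell^\infty_{\sfA(\xi)} H^s} \lesssim C(\|u\|_{L^\infty}) \|u\|_{\ell^\infty_{\sfA(\xi)} H^s}$ for polynomial $F$) controls the non-linearities $\cQ(|u|^2)$ and $\cN(u,\overline u)$. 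Since $s > \tfrac d2$ gives $\ell^\infty_{\sfA(\xi)} H^s \hookrightarrow L^\infty_x$, the polynomial structure is harmless. I would set up the iteration scheme by frequency truncation: define $u^{(n+1)}$ as the solution of the \emph{linear} equation $\partial_t u^{(n+1)} + \sfA(\nabla) u^{(n+1)} + \cQ(|u^{(n)}|^2)\nabla u^{(n+1)} = \cN(u^{(n)}, \overline{u^{(n)}})$, or else regularise via $P_{\le n}$ and pass to the limit; the former keeps the transport term with the iterate's own coefficient, which is cleaner for the energy estimate.

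The heart of the matter is the a priori energy estimate closing on a time $T = T(\|u_0\|_{\ell^\infty_{\sfA(\xi)} H^s})$. For the derivative term I would differentiate $\|P_N u\|_{L^2(Q)}^2$ in time (with a suitable partition of unity adapted to cubes $Q$ of diameter $N^\sigma$), and note that the contribution of $\cQ(|u|^2)\nabla u$ to $\tfrac{d}{dt}\|P_N u\|_{L^2}^2$ is, after integration by parts, $-\int (\nabla \cdot (\cQ(|u|^2)\,\mathbf{v})) |P_N u|^2$ up to commutators, where the dangerous top-order term is absorbed because only one derivative falls on the coefficient — this is exactly where $s > \tfrac d2 + 1$ enters, so that $\nabla \cQ(|u|^2) \in L^\infty_x$. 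The commutator $[P_N, \cQ(|u|^2)]\nabla u$ must be controlled by $\|\nabla \cQ(|u|^2)\|_{L^\infty} \|u\|_{\ell^\infty_{\sfA(\xi)} H^s}$ via a Kato–Ponce / Coifman–Meyer commutator estimate adapted to the local $L^2(Q)$ structure; here I expect to need that the spatial localisation to cubes of size $N^\sigma$ is compatible with the frequency projection $P_N$ up to rapidly-decaying tails (an almost-orthogonality / finite-overlap argument summing over neighbouring cubes). Summing the resulting differential inequality $\tfrac{d}{dt}\|u\|_{\ell^\infty_{\sfA(\xi)} H^s}^2 \lesssim C(\|u\|_{\ell^\infty_{\sfA(\xi)} H^s})\,\|u\|_{\ell^\infty_{\sfA(\xi)} H^s}^2$ and invoking Grönwall on a short interval gives the uniform bound; the same scheme run on differences $u^{(n+1)} - u^{(n)}$ at \emph{one degree lower regularity} (i.e. in $\ell^\infty_{\sfA(\xi)} H^0$, losing a derivative onto the difference of coefficients) yields contraction in the weak topology, hence a Cauchy sequence and a limit solution.

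For the remaining assertions: weak Lipschitz dependence (item \ref{eq:unique}) follows from the same difference estimate in $\ell^\infty_{\sfA(\xi)} H^0$, where the transport term loses a derivative but that derivative is covered by the $\ell^\infty_{\sfA(\xi)} H^s$-bound on the coefficients, which are known a priori for both solutions. Continuous dependence (item \ref{eq:ctsdependence}) at the top regularity is the classical Bona–Smith argument: one does not have Lipschitz continuity in $\ell^\infty_{\sfA(\xi)} H^s$ in the derivative case, so I would regularise the data $u_{0j} \mapsto P_{\le M} u_{0j}$, use the weak-Lipschitz bound to control the low-regularity error, use the uniform high-regularity bound plus interpolation to control the regularisation error uniformly in $j$, and combine via a $3\varepsilon$-argument, exploiting that $\|u_{0j} - u_0\|_{\ell^\infty_{\sfA(\xi)} H^s} \to 0$ controls both the tail $\|P_{>M}(u_{0j} - u_0)\|$ uniformly and the truncated difference in any weaker norm. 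When $\cQ \equiv 0$ there is no derivative loss, so the difference estimate closes directly in $\ell^\infty_{\sfA(\xi)} H^s$, giving strong Lipschitz dependence (item \ref{eq:stronglip}) and hence continuous dependence for free; the gain of one derivative in the threshold ($s > \tfrac d2$ versus $s > \tfrac d2 + 1$) is precisely the absence of the transport term in the energy identity.

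I expect the main obstacle to be the commutator and localisation bookkeeping in the energy estimate — specifically, verifying that the Kato–Ponce-type commutator estimates survive the passage from global $H^s$ to the spatially uniform, dispersion-adapted norm $\ell^\infty_{\sfA(\xi)} H^s$, where the $L^2$ is taken only over cubes of size $N^\sigma$ depending on the frequency. The finite-overlap/almost-orthogonality argument needed to sum local contributions over cubes, controlling the off-diagonal tails uniformly in $N$, is where the dispersion relation assumption \eqref{groupvelocity} must be used quantitatively, and is the step most likely to require care beyond a routine adaptation of the classical proof.
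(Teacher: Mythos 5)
Your proposal is correct and closely mirrors the paper's argument in all essential respects: a frequency-localised energy method in $\ell^\infty_{\sfA(\xi)} H^s$, with the dangerous low-high interaction (derivative on the high-frequency factor) treated by integration by parts to move the derivative onto the coefficient, commutator bounds of Kato--Ponce type adapted to the cubes of size $N^\sigma$, an $\ell^\infty_{\sfA(\xi)} H^0$-difference estimate for weak Lipschitz dependence, and regularisation plus a limiting argument for existence. Two presentational choices differ from the paper. First, you lean towards the Kato quasilinear iteration, solving the linearised equation with coefficient $\cQ(|u^{(n)}|^2)$ frozen at the previous iterate; the paper instead truncates the derivative non-linearity to $\cQ(u^{(M)})\nabla P_{\le M}P_{\le M} u^{(M)}$, which makes the regularised equation Lipschitz in $\ell^\infty H^s$ and hence directly solvable by the abstract contraction lemma, whereas your preferred route requires a separate linear theory for variable-coefficient transport--dispersive equations in $\ell^\infty H^s$ before iteration can even begin. (Your fallback of regularising by $P_{\le n}$ is essentially the paper's move; note, however, that the paper uses the doubled projection $P_{\le M}P_{\le M}$ precisely so that, after commuting through $\chi$ and using self-adjointness of $P_{\le M}$, the transport term still integrates by parts to a total derivative uniformly in $M$ — a single $P_{\le M}$ would not obviously close.) Second, for continuous dependence at top regularity you invoke the classical Bona--Smith $3\varepsilon$-argument; the paper implements the same idea via Ifrim--Tataru frequency envelopes, which absorb the ``tail-smallness plus high-regularity-uniformity'' bookkeeping into the slowly-varying sequence $\{c_N\}$. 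These are morally the same argument and both close. You correctly identify the main technical sensitivity — that the commutator and product estimates must survive the passage to cubes of size $N^\sigma$; the paper handles this via the kernel bound $\|\widecheck{\nabla_\xi(\sfA\psi_N)}\|_{L^1}\lesssim N^\sigma$ and the observation that the commutator kernel decays on the scale $N^{-1}\ll N^\sigma$, so translation-invariance of the $(\ell^\infty L^2)_N$-norm takes care of the off-diagonal leakage you flagged.
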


\begin{remark}
	The analogous theorem also holds in the class of real-valued $\ell^\infty_{\sfA(\xi)} H^s$-functions for equations which preserve reality, e.g. the KdV, BO, and ILW equations. 
\end{remark}

\begin{remark}
	The uniqueness statement \eqref{unique} of Theorem \ref{thm:lwp} is interesting when compared with other local well-posedness results. For initial data with prescribed spatial asymptotics, e.g. vanishing at infinity or quasi-periodicity, solutions are typically constructed within a class of functions with the same spatial asymptotics. For example, an $H^s$-solution is only shown to be unique when compared against other competitors in $H^s$, where $H^s := H^s (\R^d)$ or $H^s := (\TT^d)$. In view of the embeddings 
		\begin{align*}
			H^s (\R^d) 
				&\hookrightarrow \ell^\infty_{\sfA(\xi)}  H^s(\R^d),\\
			H^{s + \frac{d}{2}\sigma} (\TT^d)
				&\hookrightarrow \ell^\infty_{\sfA(\xi)} H^s (\R^d),
		\end{align*}
	our result shows that these $H^s$-solutions are unique even when compared to competitors with \textit{different} spatial asymptotics. In this regard, Theorem \ref{thm:lwp} \eqref{unique} has a similar spirit to the recent unconditional uniqueness result of Chapouto-Killip-Vișan for bounded solutions of the KdV equation \cite{ChapoutoEtAl2024}. 
\end{remark}

\begin{remark}
	The Sobolev embeddings \eqref{sobolev0}-\eqref{sobolev} state that for $l > s + \tfrac{d}{2}\sigma$ and $s > k + \tfrac{d}{2}$ we have
		\[
			C^l (\R^d) \hookrightarrow\ell^\infty_{\sfA(\xi)}  H^s(\R^d) \hookrightarrow C^k (\R^d).
		\]
	Thus the existence statement \eqref{exist} of Theorem \ref{thm:lwp} shows that initial data $u_0 \in C^l (\R^d)$ admits a Cauchy development under the equation \eqref{dispersive}. To our knowledge, this is the first local existence result on the scale of $C^k$-spaces for equations of the form \eqref{dispersive}. 
	
	The closest result we know of is the existence result for the following regularised NLS on $\R$,
		\[
			\partial_t u + i \partial_x^2 u = \pm i P_{\leq M} (|P_{\leq M} u|^2 P_{\leq M} u).
		\]
	with $C^2$-data by Dodson-Soffer-Spencer \cite{DodsonEtAl2020}. They prove linear $C^k$-estimates, with derivative loss, using stationary phase, while the truncation of the non-linearity in frequency space allows one to appeal to finite speed of propagation arguments. 
	While the equation above is not of the form \eqref{dispersive}, our arguments can be easily modified to recover their result, since $C^2 (\R) \hookrightarrow \ell^\infty_{\sfA(\xi) = |\xi|^2} H^{1/2+}(\R)$. 
\end{remark}

Our second main result concerns the propagation of almost periodicity under the equation \eqref{dispersive}, which was our original impetus for pursuing Theorem \ref{thm:lwp}. We restrict ourselves to one space dimension $d = 1$, as it is the setting which has garnered the most attention in the context of dispersive equations, see e.g. \cite{Tsugawa2012, Oh2014,Oh2015a, DamanikGoldstein2015,BinderEtAl2018, Papenburg2024, Schippa2024} and the references therein, though one can easily generalise the notion of almost periodicity to higher dimensions. 

To motivate the problem, let us speak loosely; we say $u: \R \to \C$ is almost periodic if it takes the form 
	\[
	u(x) = \sum_{\lambda \in \Lambda} \widehat u (\lambda) e^{i \lambda x},
	\]
for a countable set of frequencies $\sigma(u)\subseteq \R$ and Fourier coefficients $\widehat u(\lambda) \in \C$. Then, continuing to ignore issues of convergence, observe that the class of almost periodic functions is preserved by the linear flow \eqref{lindispersive} and the non-linearity of \eqref{dispersive}. Thus one might expect that almost periodic initial data leads to almost periodic solutions to \eqref{dispersive}. We provide a rigorous proof of this statement for a class of almost periodic data within the scope of Theorem \ref{thm:lwp}. More precisely, define the space of (Bohr) almost periodic functions $\mathtt{AP} (\R)$ as the uniform closure of finite trigonometric polynomials, 
	\[
		\mathtt{AP} (\R) := \overline{\Big\{ \sum_{\lambda \in \Lambda} a_\lambda e^{i \lambda x} : \text{finite $\Lambda \subseteq \R$ and $a_\lambda \in \C$}  \Big\}}^{L^\infty_x}.
	\]
An almost periodic function $u \in \mathtt{AP} (\R)$ admits a countable set of frequencies $\sigma(u) \subseteq \R$, known as the spectrum, given by  
	\[
		\sigma (u) 
			:= \Big\{ \xi \in \R : \lim_{L \to \infty} \frac{1}{2L} \int_{-L}^L u(x) e^{i x \xi} \, dx\neq 0 \Big\}.
	\] 
Then, as a corollary of the iteration argument used to prove Theorem \ref{thm:lwp} and Sobolev embedding \eqref{sobolev}, 

\begin{theorem}[Propagation of almost periodicity]\label{thm:AP}
	Let $u_0 \in (\mathtt{AP} \cap \ell^\infty_{\sfA(\xi)} H^s) (\R)$ be almost periodic for $s > \tfrac32$, then the solution to the initial data problem \eqref{dispersive} is also almost periodic $u \in C^0_t (\mathtt{AP} \cap \ell^\infty_{\sfA(\xi)} H^s)_x ([0, T] \times\R)$. Furthermore, the spectrum remains in the integer span of the initial spectrum,
		\[
			\sigma(u(t)) \subseteq \Big\{ \sum_{j} n_j \lambda_j : \text{finitely-many non-zero $n_j \in \Z$ and $\lambda_j \in \sigma(u_0)$} \Big\}.
		\]
	When there is no derivative in the non-linearity, $\cQ(|u|^2) \equiv 0$, the result holds for $s > \tfrac12$. 
\end{theorem}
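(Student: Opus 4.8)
The plan is to deduce Theorem~\ref{thm:AP} as a fairly direct corollary of the iteration scheme underlying Theorem~\ref{thm:lwp}, by checking that almost periodicity (and the integer-span condition on the spectrum) is preserved at each step of the Picard/energy iteration. First I would observe that, by the Sobolev embedding $\ell^\infty_{\sfA(\xi)} H^s (\R) \hookrightarrow C^0 (\R)$ for $s > \tfrac12$, the space $\mathtt{AP} \cap \ell^\infty_{\sfA(\xi)} H^s$ inherits the uniform-norm closure structure: a sequence converging in $\ell^\infty_{\sfA(\xi)} H^s$ converges uniformly, and the uniform limit of almost periodic functions is almost periodic, so $\mathtt{AP} \cap \ell^\infty_{\sfA(\xi)} H^s$ is a \emph{closed} subspace of $\ell^\infty_{\sfA(\xi)} H^s$. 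Hence it suffices to show that the approximating sequence produced in the proof of Theorem~\ref{thm:lwp} stays in this subspace.

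The key structural facts I would isolate are: (i) trigonometric polynomials with spectrum in a group $G \subseteq \R$ are closed under pointwise products (the spectrum of a product lies in $G + G \subseteq G$) and under complex conjugation (spectrum negates), so the non-linear map $u \mapsto \cQ(|u|^2)\nabla u - \cN(u,\overline u)$ sends $\mathtt{AP}$-functions with spectrum in $G$ to $\mathtt{AP}$-functions with spectrum in $G$, where $G := \{\sum_j n_j \lambda_j : n_j \in \Z,\ \lambda_j \in \sigma(u_0)\}$ is the group generated by $\sigma(u_0)$; and (ii) the linear propagator $e^{-t\sfA(\nabla)}$ preserves $\mathtt{AP}$ with spectrum in $G$, since on a trigonometric polynomial it acts by $e^{i\lambda x} \mapsto e^{-t\sfA(\lambda)} e^{i\lambda x}$, i.e. it multiplies each Fourier coefficient by a scalar and does not move the spectrum. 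Both statements pass to uniform limits because $\sigma(u) \subseteq F$ for closed $F$ is a uniformly-closed condition (the Bohr--Fourier coefficient $\lim_{L\to\infty}\frac{1}{2L}\int_{-L}^L u(x)e^{-ix\xi}\,dx$ depends continuously on $u$ in $L^\infty_x$), and $G$ — being a countable subgroup — is in particular a legitimate such $F$ for our purposes after noting we only need spectrum in the countable set $G$, not its closure.

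With these two facts in hand, the proof is an induction on the iteration. Whichever scheme Theorem~\ref{thm:lwp} uses — whether a parabolic-regularisation/frequency-truncation energy argument or a contraction on a Duhamel-type operator $u \mapsto e^{-t\sfA(\nabla)}u_0 + \int_0^t e^{-(t-t')\sfA(\nabla)}\mathcal{F}(u(t'))\,dt'$ — each stage is built from the data $u_0$, the linear flow, pointwise algebraic non-linear operations, and integration in $t$ (which is a limit of Riemann sums, hence preserves, for each fixed $t$, the property of being a uniform limit of functions with spectrum in $G$). Starting the iteration at $u^{(0)} := u_0 \in \mathtt{AP}$ with $\sigma(u^{(0)}) \subseteq \sigma(u_0) \subseteq G$, every iterate $u^{(n)}(t)$ then lies in $\mathtt{AP}$ with spectrum in $G$, and since $u^{(n)} \to u$ in $C^0_t (\ell^\infty_{\sfA(\xi)} H^s)_x$ hence uniformly on $[0,T]\times\R$, the limit $u(t)$ is almost periodic with $\sigma(u(t)) \subseteq G$, which is exactly the claimed inclusion. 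If the truncation scheme is used, one extra point to check is that the Littlewood--Paley or sharp frequency cutoffs $P_{\leq M}$ also preserve $\mathtt{AP}$ with spectrum in $G$ — which they do, since a Fourier multiplier with bounded symbol acts on a trigonometric polynomial by scaling each coefficient by the symbol's value at $\lambda$, and again passes to uniform limits.

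The main obstacle I anticipate is purely one of bookkeeping rather than genuine difficulty: making precise the interchange of the Bohr mean-value functional with the various limits (uniform limits of iterates, and the time integral) so that the spectral inclusion genuinely survives, and confirming that whatever auxiliary approximation device appears inside the proof of Theorem~\ref{thm:lwp} — mollification in $x$, frequency truncation, or parabolic regularisation — is realised by operators that do not enlarge the spectrum beyond $G$. Once one has the lemma that "uniform limit of trigonometric polynomials with frequencies in the subgroup $G$ has spectrum contained in $G$", together with the algebra property $(\star\star)$ ensuring the iteration actually converges in $\ell^\infty_{\sfA(\xi)} H^s$ for $s > \tfrac12$ (respectively $s > \tfrac32$ with the derivative non-linearity), the argument closes with no new analytic input.
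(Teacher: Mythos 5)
Your proposal is correct and matches the paper's strategy: show that the Picard iterates stay in the class of almost periodic functions with spectrum in the $\Z$-module $G$ generated by $\sigma(u_0)$, and use Sobolev embedding $\ell^\infty_{\sfA(\xi)}H^s\hookrightarrow C^0$ (hence uniform convergence) together with the uniform-closedness of that class to pass to the limit. The one simplifying move the paper makes that you gesture at but do not quite state, and which dissolves the "bookkeeping" you flag as the main obstacle, is a preliminary reduction: by continuous dependence on data (Theorem~\ref{thm:lwp}\eqref{eq:ctsdependence}) and the completeness Lemma~\ref{lem:complete}, it suffices to take $u_0$ to be a \emph{finite} trigonometric polynomial in $\cT_\Lambda(\R)$, after which each Picard iterate for the (frequency-truncated) regularised system is manifestly a finite trigonometric polynomial with spectrum in $G$ — one never needs to extend differentiation, $P_{\le M}$, or the Bohr mean to general $\mathtt{AP}$ functions. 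Without that reduction, your direct argument that $\cQ(|u|^2)\nabla u$ preserves $\mathtt{AP}_G$ for general $u\in(\mathtt{AP}_G\cap\ell^\infty_{\sfA(\xi)}H^s)$ is still true (differentiation of an $\ell^\infty_{\sfA(\xi)}H^s$, $s>\tfrac32$, function can be recovered via the $P_{\le N}$-smoothing of \eqref{eq:approx}, and each $\nabla P_{\le N}u$ is again a Fourier multiplier with integrable kernel applied to $u$), but it takes more work to justify than the paper's clean reduction.
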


\begin{remark}
	Our definition of almost periodicity differs from Bohr's original formulation \cite{Bohr1925} in terms of almost periods. Nonetheless, the two are equivalent, see e.g. \cite[Chapter 3.2]{Corduneanu2009}. 
\end{remark}

\begin{remark}
	The well-posedness results for almost periodic data in \cite{Tsugawa2012, DamanikGoldstein2015,Oh2015a,BinderEtAl2018,Papenburg2024,Schippa2024} all assume decay or summability assumptions on the Fourier coefficients. Aside from \cite{Oh2015a}, these results also assume \textit{quasi-periodicity}, i.e. the spectrum is finitely generated over $\Z$. In contrast, 
	Theorem \ref{thm:AP} proves local well-posedness for almost periodic functions with \textit{arbitrary} spectrum and \textit{no decay or summability assumptions} on the Fourier coefficients. 

	The results in \cite{Egorova1994,Monvel, Oh2014} work with norms which are of a similar flavour to ours, namely 
		\[
			||u||_{S^{s, p}} 
				:= \sup_{y \in \R} ||u||_{H^s ([y, y + 1])}. 
		\]
	\cite{Egorova1994} for KdV and \cite{Monvel, Oh2014} for NLS prove global existence with data which can be approximated, with exponential convergence in the $S^{s, 2}$-norm for $s$ sufficiently large, by periodic functions with growing periods $\alpha_j \to \infty$. 
\end{remark}

\begin{remark}
	While the study of global existence for \eqref{dispersive} goes beyond the scope of this article, let us motivate the problem for the interested reader. Given plane wave initial data, the corresponding evolution under the linear equation \eqref{lindispersive} is also a plane wave oscillating in both space and time, 
	\[
		e^{-t \sfA(\partial_x)} e^{i \xi_0 x} = e^{-i t \sfA(\xi_0)} \, e^{i \xi_0 x}.
	\]
	Following our earlier heuristic discussion, one might ask whether a global solution $u: \R\times \R \to \C$ to \eqref{dispersive} with almost periodic initial data $u_0 : \R \to \C$ is almost periodic in space-time, taking the form 
		\[
			u(t, x) = \sum_{\substack{\lambda \in \Lambda \\ \mu \in \Pi}} \widehat u(\mu,\lambda) \, e^{i \mu t} \, e^{i \lambda x},
		\]
	for some coefficients $\widehat u(\mu, \lambda) \in \C$ and countable frequency sets $\Lambda, \Pi \subseteq \R$. 
	
	In the setting of spatially periodic solutions to KdV, McKean-Trubowitz \cite{McKeanTrubowitz1976} answered this question in the affirmative, showing that $C^\infty (\TT)$ solutions evolve almost periodically in time using complete integrability, with subsequent extensions by Bourgain \cite{Bourgain1993} to $L^2 (\TT)$ and Kappeler-Topalov \cite{KappelerTopalov2006} to $H^{-1} (\TT)$. Inspired by these results, Deift \cite{Deift2008,Deift2017} conjectured that one could relax the assumptions on the initial data to almost periodicity. 
	Damanik-Goldstein in \cite{DamanikGoldstein2015}, joined later by Binder and Lukic in \cite{BinderEtAl2018}, confirmed the conjecture for small amplitude quasi-periodic analytic initial data with Diophantine frequencies. 
	\end{remark}

	\begin{remark}
		For KdV, it is interesting to compare Theorem \ref{thm:AP} to \cite[Theorem 1.3]{ChapoutoEtAl2024} which exhibits a bounded solution with almost periodic data $u_0 \in \mathtt{AP} (\R)$ which loses continuity at a later time $t_* > 0$, so in particular $u(t_*) \not\in \mathtt{AP} (\R)$ and the Deift conjecture fails for KdV in the class $\mathtt{AP} (\R)$. 

	\end{remark}

\subsection*{Outline of the paper}

After recalling some conventional notation in Section \ref{sec:prelim}, the main substance of the paper begins in Section \ref{sec:function}, where we define the $\ell^\infty H^s$-spaces and establish the analogues of the standard Sobolev space estimates. Section \ref{sec:linear} concerns the linear energy estimate \eqref{prop1}. In Section \ref{sec:nonlinear}, we carry out the energy method to prove \textit{a priori} estimates for \eqref{dispersive}, establishing an energy estimate and weak Lipschitz dependence (Theorem \ref{thm:lwp} \eqref{unique}). We start Section \ref{sec:existence} by introducing the semigroup method, proving local well-posedness without derivative non-linearity. The remainder of the section finishes off the existence theory (Theorem \ref{thm:lwp} \eqref{exist}) by constructing a solution to the derivative non-linear equation via approximate solutions. We conclude continuous dependence (Theorem \ref{thm:lwp} \eqref{ctsdependence}) in Section \ref{sec:cts} by the method of frequency envelopes. In Section \ref{sec:periodic}, we show propagation of almost periodicity (Theorem \ref{thm:AP}).

\begin{acknowledgements}
	The author would like to thank David Ambrose for discussions which inspired us to initiate this work, Sung-Jin Oh for suggesting the scale of spaces developed in \cite{MarzuolaEtAl2012}, Sultan Aitzhan for discussions on the context of Theorem \ref{thm:AP} relative to the existing literature, and Robert Schippa for pointing out that the energy method fails when the derivative non-linearity is not gauge covariant, and in fact the equation can be ill-posed. During the writing of this paper, the author was partially supported by the National Science Foundation CAREER Grant under NSF-DMS-1945615.

\end{acknowledgements}

\section{Preliminaries}\label{sec:prelim}

\subsubsection*{Asymptotic notation}

The notations $X \lesssim Y$ and $X = O(Y)$ denote the inequality $X \leq CY$ for an implicit constant $C > 0$. Moreover $X \sim Y$ is an abbreviation for $X \lesssim Y \lesssim X$. We indicate the dependence of implicit constants on parameters by subscripts, e.g. $X \lesssim_s Y$ denotes $X \leq C(s) Y$. 

\subsubsection*{Dyadic integers}

We take the convention that the natural numbers contain zero, that is, $\N := \{ 0, 1, 2, \dots \}$ and denote by $2^\N = \{ 1, 2, 4, \dots\}$ for the dyadic integers. Often times we will write upper-case letters, such as $N, M, A, B$, to represent a dyadic integer. When working with frequency envelopes (see Section \ref{sec:cts}), it will be convenient to write lower-case letters for the dyadic exponent, e.g. $N = 2^n$ and $M = 2^m$. 

\subsubsection*{An $\epsilon$ of room} Given $s \geq 0$, we write $s+ := s + \epsilon$ and $s- := s - \epsilon$ for any $\epsilon > 0$. We will suppress the dependence of implicit constants on $\epsilon$, though one should keep in mind they blow-up when taking $\epsilon \to 0$. 

\subsubsection*{Fourier transform and multipliers}

For a tempered distribution $f \in \cS(\R^d)^*$, we denote its Fourier transform by $\widehat u$ and its inverse Fourier transform by $\widecheck f$. Given a tempered distribution $\mathsf m : \R^d \to \C$, we define the Fourier multiplier $\mathsf m(\nabla) : \cS (\R^d) \to \cS(\R^d)^*$ by the formula 
	\[
		\widehat{\mathsf{m} (\nabla) f}(\xi) = \mathsf{m}(\xi) \widehat f(\xi).
	\]
We refer to $\mathsf{m}(\xi)$ as the symbol of the multiplier $\mathsf{m} (\nabla)$. 	

\subsubsection*{Littlewood-Paley projections}

Fix $\phi \in C^\infty_c (\R^d)$ that is non-negative, radial, and such that $\phi(\xi) \equiv 1$ if $|\xi| \leq 1$ and $\phi(\xi) \equiv 0$ if $|\xi| \geq \tfrac{11}{10}$. For each dyadic integer $N \in 2^\N$, set 
	\begin{align*}
		\psi_N (\xi)
			&= 
			\begin{cases} 
				\phi\big( \tfrac\xi N \big) - \phi \big( \tfrac{2\xi}{N} \big),
					&\text{if $N \geq 2$},\\
				\phi(\xi),
					&\text{if $N = 1$},	
			\end{cases}	 \\
		\phi_N (\xi)	
			&= \phi\big(\tfrac{\xi}{N}\big),
	\end{align*}
By construction, $\{\psi_N\}_N$ form a dyadic partition of unity for frequency space. Define the Littlewood-Paley projections to frequencies $|\xi| \sim N$ and $|\xi| \lesssim N$ respectively as the Fourier multipliers
	\begin{align*}
		\widehat{P_N f} (\xi) 
			&:= \psi_N (\xi)\widehat f (\xi),\\
		\widehat{P_{\leq N} f} (\xi)
			&:= \phi \big(\tfrac{\xi}{N}\big) \widehat f (\xi),
	\end{align*}
We will frequently abbreviate the projections by $f_N := P_N f$ and $f_{\leq N} := P_{\leq N} f$. 

The name "projection" is a bit of a misnomer; the multipliers $P_N$ fail to be true projections in the sense that $P_N P_N \neq P_N$. Nevertheless, a slightly modified statement holds; define the fattened projections by
	\[
		\widetilde{P_N} := P_{\frac{N}{2}} + P_N + P_{2N},
	\]
then $\widetilde{P_N} P_N = P_N$.

The projections are bounded on $L^p_x(\R^d)$ (and in fact on any translation-invariant Banach space by Minkowski's inequality; see Proposition \ref{prop:bounded}) and obey, for $1 \leq p \leq q \leq \infty$, Bernstein's inequalities, 
\begin{align*}
	||f_N||_{L^q}
		&\lesssim_{p, q} N^{\frac{d}p - \frac{d}q} ||f||_{L^p},\\
	||f_{\leq N}||_{L^q}
		&\lesssim_{p, q} N^{\frac{d}p - \frac{d}q} ||f_{\leq N}||_{L^p},	
\end{align*}
and the Sobolev-Bernstein inequalities, 
		\begin{alignat*}{2}
			|| \nabla^k f_{\leq N} ||_{L^p}
				&\lesssim N^k ||f_{\leq N}||_{L^p},
				&& \\
			|| \nabla^k f_N||_{L^p}	
				&\sim N^k ||f_N ||_{L^p}, && \qquad \text{for $N \geq 2$}.
		\end{alignat*}

\section{Uniform local Sobolev space}\label{sec:function}

Assuming the dispersion relation is a symbol of order $\sigma + 1$, i.e. satisfying \eqref{groupvelocity}, wave packets localised at frequency $|\xi| \approx N$ travel with group velocity $|\nabla_\xi \sfA(N)| \lesssim N^\sigma$, and thereby within spatial cubes of length $L \approx N^\sigma$ on unit time scales. Thus, we expect a translation-invariant norm capturing this phenomenon to be approximately conserved by the linear flow for short times. 

To build such a norm, fix a cut-off function $\chi \in C^\infty_c (\R^d)$ that is non-negative, radially decreasing, and such that $\chi \equiv 1$ on the cube $[-\tfrac14, \tfrac14]^d$ and vanishing outside the cube $[-\tfrac34, \tfrac34]^d$. Recentering on the lattice $j \in \Z^d$ and rescaling by a dyadic integer $N \in 2^{\N}$, denote
	\[
		\chi_{j, N} (x) := \chi\Big( \tfrac{x - j}{N^\sigma} \Big).
	\]
We will often suppress the subscripts for brevity in cases where the center $j$ is not important in view of translation-invariance or when the scale $N$ is clear from the context. After possibly rescaling the amplitude on the overlapping regions, we can choose our cut-off such that for each $N$ the family of translates $\{ \chi_{j, N}\}_j$ forms a partition of unity subordinate to a cover by cubes of length scale $L \approx N^\sigma$,  
	\[
		1 \equiv \sum_{j \in \Z^d} \chi_{j, N}. 
	\]
We measure waves localised to frequency $|\xi| \approx N$ by the norm 
	\[
		||f||_{{(\ell^\infty_{\sfA(\xi)} L^2)_N}}
			:= \sup_{j \in \Z^d}|| \chi_{j, N} \,  f||_{L^2_x}.
	\]
Decomposing an arbitrary tempered distribution $f \in \cS(\R^d)^*$ into dyadic frequencies and square summing these norms adapted to each Littlewood-Paley piece, we obtain the \textit{uniform local (inhomogeneous) Sobolev norm} adapted to the dispersion relation $\sfA(\xi)$, 
	\[
		||f||_{\ell^\infty_{\sfA(\xi)} H^s}
			:= \Big|\Big| N^{s} || P_N f||_{(\ell^\infty_{\sfA(\xi)} L^2)_N} \Big|\Big|_{\ell^2_N}.
	\]
We denote the space of tempered distributions with finite norm by $\ell^\infty_{\sfA(\xi)} H^s (\R^d)$. As we will later see, one can alternatively think of the space as the completion of the space of smooth functions with bounded derivatives with respect to the $\ell^\infty_{\sfA(\xi)} H^s$-norm. For the remainder of this article, we will fix a dispersion relation and, for brevity, suppress the dependence on $\sfA(\xi)$, writing $\ell^\infty H^s \equiv \ell^\infty_{\sfA(\xi)} H^s$.

\begin{remark}
	The uniform local Sobolev space can be viewed as the $p = \infty$ endpoint in the family of Sobolev-type spaces $\ell^p H^s (\R^d)$. Indeed, our construction is inspired by previous work on the $p = 1$ endpoint in the context of Schr{\"o}dinger equations, $\sfA(\xi) = i |\xi|^2$, by Marzuola-Metcalfe-Tataru \cite{MarzuolaEtAl2012} and KdV-type equations, $\sfA(\xi) = - i \xi^3$, by Harrop-Griffiths 	\cite{Harrop-Griffiths2015,Harrop-Griffiths2015a}. In these works the motivation was to develop a function space of \textit{spatially-localised} initial data with a \textit{translation-invariant} norm in which one can leverage \textit{local smoothing} to prove local well-posedness for highly non-linear equations. 
\end{remark}

\subsection{Basic properties}\label{subsec:properties}

For completeness, we record some properties of the $(\ell^\infty L^2)_N$- and $\ell^\infty H^s$-norms which might as well be regarded as folklore results. 

\begin{proposition}[Boundedness of $P_N$]\label{prop:bounded}
	For each dyadic integer $N, M \in 2^\N$, the projections $P_N$ and $P_{\leq N}$ are bounded with respect to the $(\ell^\infty L^2)_M$-norm,  
		\begin{equation}\label{eq:bounded}
			\begin{split}
				||P_N f||_{(\ell^\infty L^2)_M} 
					&\lesssim || f ||_{(\ell^2_j L^2_x)_M},\\
				||P_{\leq N} f||_{(\ell^\infty L^2)_M} 
					&\lesssim || f ||_{(\ell^2 L^2)_M}.  
			\end{split} 
		\end{equation}
	As a corollary, the projections are also bounded with respect to the $\ell^\infty H^s$-norms.
\end{proposition}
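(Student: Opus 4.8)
The plan is to prove the $(\ell^\infty L^2)_M$-bounds directly, and then deduce the $\ell^\infty H^s$-bound by a near-orthogonality argument. The key point is that a Littlewood--Paley multiplier $P_N$ has a convolution kernel $K_N = \widecheck{\psi_N}$ which is an $L^1$-normalised bump at scale $N^{-1}$, hence concentrated at scales $\lesssim N^{-1} \lesssim 1 \lesssim M^\sigma$, i.e. much smaller than the cutoff scale $M^\sigma$ defining the $(\ell^\infty L^2)_M$-norm. So convolving by $K_N$ essentially does not move mass between the cubes $Q_{j,M}$ of side $\sim M^\sigma$, up to rapidly decaying tails.

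First I would fix $M$ and estimate $\|\chi_{j,M}(P_N f)\|_{L^2_x}$. Write $\chi_{j,M}(P_N f) = \chi_{j,M}\, K_N * f = \chi_{j,M}\, K_N * \big(\sum_{k} \chi_{k,M} f\big)$, using the partition of unity $1 = \sum_k \chi_{k,M}$. For the term with $k$ near $j$ (say $|j-k| \lesssim M^\sigma$, a bounded number of indices since consecutive $\chi$'s overlap), one just uses Young's inequality $\|\chi_{j,M}\, K_N * (\chi_{k,M} f)\|_{L^2} \leq \|K_N\|_{L^1}\|\chi_{k,M}f\|_{L^2} \lesssim \|f\|_{(\ell^2 L^2)_M}$ — actually it suffices to bound by the single term $\|\chi_{k,M}f\|_{L^2}$, which is dominated by $\|f\|_{(\ell^2 L^2)_M}$. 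For the far term $|j - k| \gg M^\sigma$, I exploit that $\operatorname{dist}(\operatorname{supp}\chi_{j,M}, \operatorname{supp}\chi_{k,M}) \gtrsim |j-k|$ while $K_N(x)$ obeys the rapid decay bound $|K_N(x)| \lesssim_L N^d \langle N x\rangle^{-L}$ for every $L$; thus $\|\chi_{j,M} K_N * (\chi_{k,M}f)\|_{L^2} \lesssim_L \langle N|j-k|\rangle^{-L}\|\chi_{k,M}f\|_{L^2} \lesssim_L \langle j-k\rangle^{-L}\|f\|_{(\ell^2 L^2)_M}$ since $N \geq 1$. Summing the geometric-type series $\sum_k \langle j-k\rangle^{-L}$ over $k \in \Z^d$ converges for $L > d$, giving $\|\chi_{j,M}(P_N f)\|_{L^2} \lesssim \|f\|_{(\ell^2 L^2)_M}$ uniformly in $j$; taking the supremum over $j$ yields the first inequality. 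The bound for $P_{\leq N}$ is identical since $\phi(\cdot/N)$ has a kernel with the same rapid-decay profile.

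For the corollary, write $\|P_N f\|_{\ell^\infty H^s} = \big\| M^s \|P_M(P_N f)\|_{(\ell^\infty L^2)_M} \big\|_{\ell^2_M}$. Since $P_M P_N = 0$ unless $M \sim N$ (at most three dyadic scales contribute, via the fattened projection $\widetilde{P_N}$), only finitely many $M$ survive, and for those the already-proven bound gives $\|P_M(P_N f)\|_{(\ell^\infty L^2)_M} \lesssim \|P_N f\|_{(\ell^2_j L^2_x)_M} \lesssim \|f\|_{\ell^\infty H^s}$ after summing in $M$ and using $M \sim N$; more cleanly one simply notes $\|P_N f\|_{\ell^\infty H^s} \lesssim N^s \|P_N f\|_{(\ell^\infty L^2)_N} \leq \|f\|_{\ell^\infty H^s}$ modulo the harmless finite overlap, using that $P_N = P_N \widetilde{P_N}$ acts boundedly on $(\ell^\infty L^2)_N$ by the first part with a dyadic sum that is controlled since the $\ell^2$-norm dominates the $\ell^\infty$-norm termwise. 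The analogous argument handles $P_{\leq N}$.

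The main obstacle is the bookkeeping in the far-tail estimate: making precise that the distance between $\operatorname{supp}\chi_{j,M}$ and $\operatorname{supp}\chi_{k,M}$ grows like $|j-k|$ (it is $\gtrsim |j-k| - CM^\sigma$, harmless once $|j-k| \geq 2C M^\sigma$), and ensuring the kernel decay at scale $N^{-1}$ is strong enough after rescaling — here the key simplification is that $N \geq 1$, so $\langle N|j-k|\rangle^{-L} \leq \langle |j-k|\rangle^{-L}$ and no dependence on $N$ or $M$ leaks into the final constant. Everything else is Young's inequality and summing an absolutely convergent lattice sum.
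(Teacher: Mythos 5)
Your proof is correct, but it takes a genuinely different and more hands-on route than the paper's. The paper observes that $\|\widecheck{\psi_N}\|_{L^1_x}\sim 1$ uniformly in $N$ (by scaling), then applies Minkowski's integral inequality,
\[
\|\chi\, P_N f\|_{L^2_x} \leq \int_{\R^d} |\widecheck{\psi_N}(y)|\,\|\chi(x)f(x-y)\|_{L^2_x}\,dy,
\]
and, for each fixed $y$, bounds the inner $L^2_x$-norm by $\|f\|_{(\ell^\infty L^2)_M}$ via the translation-invariance equivalence \eqref{equiv1}. No decomposition of $f$ and no kernel tail analysis are needed: the translation-invariant reformulation of the norm absorbs the spreading of mass directly, and the bound is automatically uniform in $N$ because of the $L^1$-normalisation of the kernel. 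Your argument, decomposing $f = \sum_k \chi_{k,M}f$ and handling near and far interactions separately via Young's inequality and rapid kernel decay, is also valid but trades the appeal to \eqref{equiv1} for heavier bookkeeping. One caveat in your far-field step: the bound $\|\chi_{j,M}K_N * (\chi_{k,M}f)\|_{L^2}\lesssim_L \langle N|j-k|\rangle^{-L}\|\chi_{k,M}f\|_{L^2}$ sweeps under the rug both volume factors $\sim M^{d\sigma}$ coming from Cauchy--Schwarz over the cube supports (the cubes have side $\sim M^\sigma$, not $\sim 1$) and the fact that the separation of the supports scales like $M^\sigma|j-k|$ on the rescaled lattice rather than $|j-k|$. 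These discrepancies are harmless, since $N, M^\sigma \geq 1$ and $L$ may be taken arbitrarily large, but a complete write-up must track them. Your deduction of the $\ell^\infty H^s$-corollary via the almost-orthogonality $P_M P_N = 0$ unless $M\sim N$ matches what the paper dismisses as ``immediate from the definition.''
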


\begin{proof}
	We prove boundedness of $P_N$ with respect to $(\ell^\infty L^2)_M$; the proof for $P_{\leq N}$ is similar, and boundedness on $\ell^\infty H^s$ is immediate from the definition. A change of variables shows that the kernel of $P_N$ satisfies $||\widecheck{\psi_N}||_{L^1_x} \sim ||\widecheck \psi||_{L^1_x} \sim 1$. We conclude from Minkowski's inequality and translation invariance \eqref{equiv1} that
		\begin{align*} 
			|| \chi P_N f||_{L^2_x}
				&\leq \int_{\R^d} \widecheck{\psi_N} (y) || \chi(x) f (x - y)||_{L^2_x} \, dy \lesssim \sup_{j \in \Z^d} ||\chi_j f||_{L^2_x}.
		\end{align*}	
	This completes the proof. 	
\end{proof}

In the proof above, we used the fact that the $(\ell^\infty L^2)_M$ norms are equivalent to translation-invariant norms. Indeed, we will often find it convenient to modify our definitions of the $(\ell^\infty L^2)_N$- and $\ell^\infty H^s$-norms to more suitable, yet nonetheless equivalent, norms.

\begin{proposition}[Equivalent norms]\label{prop:equiv}
	The $(\ell^\infty L^2)_N$- and $\ell^\infty H^s$-norms can be equivalently defined up to the following modifications,  
	\begin{enumerate}
		\item replacing supremum over $j \in \Z^d$ with supremum over $y \in \R^d$, furnishing a translation-invariant norm,
			\begin{equation}
				||f||_{(\ell^\infty L^2)_N} \sim \sup_{y \in \R^d}||\chi_{y, N} \,  f ||_{L^2_x},\label{eq:equiv1}
			\end{equation}	

		\item replacing sharp cut-offs $\chi$ with frequency-localised cut-offs $P_{\leq 1} \chi$, 
		\begin{equation}\label{eq:equiv2}
				||f||_{(\ell^\infty L^2)_N} \sim \sup_{j \in \Z^d} || P_{\leq 1}\chi_{j, N} \, f ||_{L^2_x},
		\end{equation}

		\item replacing projections $P_N$ with fattened projections $\widetilde{P_N}$,
			\begin{equation}
				||f||_{\ell^\infty H^s}
					\sim \Big|\Big| N^s ||\widetilde{P_N} f ||_{(\ell^\infty L^2)_N}  \Big|\Big|_{\ell^2_N}\label{eq:equiv3}.
			\end{equation}
	\end{enumerate}	
	Furthermore, the $(\ell^\infty L^2)_N$-norms are equivalent, satisfying 
	\begin{enumerate}
		\setcounter{enumi}{3}
		\item monotonicity in $N$, 
			\begin{equation}\label{eq:monotone}
				||f||_{(\ell^\infty L^2)_N}
					\lesssim 
					\begin{cases}
						||f||_{(\ell^\infty L^2)_M}, 
						&\text{if $M \geq N$,}\\
						\left( \frac{N}{M}\right)^{d \sigma} ||f||_{(\ell^\infty L^2)_M},
						&\text{if $M < N$}.
					\end{cases}
			\end{equation}
	\end{enumerate}	
\end{proposition}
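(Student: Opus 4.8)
The plan is to prove the four equivalences in Proposition \ref{prop:equiv} separately, since each is essentially a soft argument comparing the effect of different cut-offs or projections against a fixed one. Throughout, the main recurring tool is Minkowski's inequality together with the fact that all the kernels involved are $L^1$-normalised, plus the almost-orthogonality encoded in the finite overlap of the cut-off families $\{\chi_{j,N}\}_j$.

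\medskip

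\textbf{Translation invariance \eqref{equiv1}.} The inequality $\sup_{j \in \Z^d} \|\chi_{j,N} f\|_{L^2} \leq \sup_{y \in \R^d} \|\chi_{y,N} f\|_{L^2}$ is trivial since $\Z^d \subseteq \R^d$. For the reverse, given an arbitrary $y \in \R^d$, write $y = j + r$ with $j \in \Z^d$ and $r \in [-\tfrac12, \tfrac12]^d$; then $\chi_{y,N}$ is supported in a cube of side $\approx N^\sigma$ which, since $N^\sigma \geq 1$, meets only $O(1)$ of the cubes $\{[-\tfrac34,\tfrac34]^d N^\sigma + k N^\sigma\}$, equivalently one has the pointwise bound $\chi_{y,N} \lesssim \sum_{|k| \lesssim 1} \chi_{j+k, N}$ (up to adjusting constants, using that $\chi \equiv 1$ on the smaller cube so the shifted family dominates). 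Hence $\|\chi_{y,N} f\|_{L^2} \lesssim \sum_{|k|\lesssim 1}\|\chi_{j+k,N} f\|_{L^2} \lesssim \sup_{j\in\Z^d}\|\chi_{j,N}f\|_{L^2}$, and taking the supremum over $y$ finishes this part. The same bound upgrades to $\ell^\infty H^s$ termwise.

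\medskip

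\textbf{Frequency-localised cut-offs \eqref{equiv2}.} Write $\chi_{j,N} = P_{\leq 1}\chi_{j,N} + P_{>1}\chi_{j,N}$. Since $\chi$ is Schwartz and $N^\sigma \geq 1$, the scaling $\chi_{j,N}(x) = \chi(\tfrac{x-j}{N^\sigma})$ has Fourier support morally at frequencies $\lesssim N^{-\sigma} \leq 1$, so the tail $P_{>1}\chi_{j,N}$ is small; more precisely, its convolution kernel satisfies $\|\widecheck{P_{>1}\chi_{j,N}}\|_{L^1}$ — wait, one rather estimates $\|(P_{>1}\chi_{j,N}) f\|_{L^2}$ by writing $P_{>1}\chi_{j,N}$ as a rapidly decaying function (in $x$, at scale $N^\sigma$) and summing the contributions over neighbouring cubes as in \eqref{equiv1}. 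One gets $\|(P_{>1}\chi_{j,N})f\|_{L^2} \lesssim_M N^{-\sigma M}\sup_j\|\chi_{j,N}f\|_{L^2}$ for all $M$, which is absorbed; conversely $P_{\leq 1}\chi_{j,N}$ is controlled by $\chi_{j,N}$ plus neighbouring cut-offs in the same way since its kernel is $L^1$-bounded uniformly in $N$ (again by scaling, as the kernel of $P_{\leq 1}$ applied to a cube of side $\geq 1$ stays $L^1$-normalised). This gives both directions.

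\medskip

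\textbf{Fattened projections \eqref{equiv3} and monotonicity \eqref{monotone}.} For \eqref{equiv3}, since $\widetilde{P_N} P_N = P_N$ we have $\|P_N f\|_{(\ell^\infty L^2)_N} = \|P_N \widetilde{P_N} f\|_{(\ell^\infty L^2)_N} \lesssim \|\widetilde{P_N}f\|_{(\ell^2 L^2)_N}$ — no, we want to keep $(\ell^\infty L^2)$; use Proposition \ref{prop:bounded} with the roles arranged so that $\|P_N g\|_{(\ell^\infty L^2)_N}\lesssim \|g\|_{(\ell^\infty L^2)_N}$ after noting the kernel bound $\|\widecheck{\psi_N}\|_{L^1}\sim 1$ exactly as in the proof of Proposition \ref{prop:bounded}; applying this with $g = \widetilde{P_N}f$ gives $\lesssim$, and the reverse $\|\widetilde{P_N}f\|_{(\ell^\infty L^2)_N} \lesssim \|f_{N/2}\|_{(\ell^\infty L^2)_{N/2}} + \|f_N\|_{(\ell^\infty L^2)_N} + \|f_{2N}\|_{(\ell^\infty L^2)_{2N}}$ follows from \eqref{monotone} to compare the cut-off scales, after which square-summing in $N$ and using that each dyadic index is hit $O(1)$ times yields the equivalence. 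For \eqref{monotone}: when $M \geq N$ the cube of side $N^\sigma$ is contained in one of side $M^\sigma$, so $\|\chi_{j,N}f\|_{L^2} \lesssim \|\chi_{j',M}f\|_{L^2}$ for a suitable $j'$ (again using $\chi\equiv 1$ on the inner cube to dominate), giving the first case; when $M < N$ one tiles the large cube of side $N^\sigma$ by $\approx (N/M)^{d\sigma}$ cubes of side $M^\sigma$ and bounds $\|\chi_{j,N}f\|_{L^2}^2 \lesssim \sum_{\text{tiles}} \|\chi_{j',M}f\|_{L^2}^2 \lesssim (N/M)^{d\sigma}\sup_{j'}\|\chi_{j',M}f\|_{L^2}^2$, which is the second case.

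\medskip

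I expect the only mildly delicate point to be making the heuristic "$\chi_{j,N}$ is essentially frequency-localised to $|\xi|\lesssim N^{-\sigma}$" in \eqref{equiv2} fully rigorous while keeping constants uniform in $N$; everything else is bookkeeping with $L^1$-normalised kernels, finite overlap of cube families, and the elementary containment/tiling of cubes. The order above — \eqref{equiv1}, then \eqref{equiv2}, then \eqref{monotone}, then \eqref{equiv3} (which uses \eqref{monotone}) — respects the logical dependencies.
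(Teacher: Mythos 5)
Your proof is correct, and for items \eqref{equiv1}, \eqref{equiv3}, and \eqref{monotone} it follows essentially the same route as the paper: the $O(1)$-overlap covering argument for translation invariance; the $L^1$ kernel bound plus Minkowski for one direction of \eqref{equiv3} together with the decomposition $\widetilde{P_N} = P_{N/2} + P_N + P_{2N}$, the triangle inequality, and \eqref{monotone} for the other; and the containment/tiling of cubes for \eqref{monotone}. You are a touch more careful than the paper in \eqref{equiv3} in explicitly invoking \eqref{monotone} to rescale the cube from $N^\sigma$ to $(N/2)^\sigma$ and $(2N)^\sigma$ before square-summing, which the paper leaves implicit. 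In \eqref{monotone} your Pythagorean tiling actually yields the sharper exponent $(N/M)^{d\sigma/2}$ rather than the stated $(N/M)^{d\sigma}$ (which is what the triangle inequality gives); both suffice for the proposition.

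The genuine divergence is in item \eqref{equiv2}. The paper dispatches the nontrivial direction $\|\chi_{j,N} f\|_{L^2} \lesssim \sup_j \|P_{\leq 1}\chi_{j,N} f\|_{L^2}$ in one line by asserting that the kernel $\widecheck{\phi}$ of $P_{\leq 1}$ is non-negative, so that $\chi \lesssim P_{\leq 1}\chi$ pointwise. You instead decompose $\chi_{j,N} = P_{\leq 1}\chi_{j,N} + P_{>1}\chi_{j,N}$ and control the tail by rapid decay at rate $O_K(N^{-\sigma K})$, then absorb. Your route is in fact the more reliable one: with the $\phi$ fixed in Section \ref{sec:prelim} the paper's premise cannot hold. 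Since $\phi \equiv 1$ on a neighbourhood of the origin, all $\partial_\xi^\alpha \phi(0)$ vanish for $|\alpha| \geq 1$; if moreover $\widecheck{\phi} \geq 0$ with $\int \widecheck{\phi} = \phi(0) = 1$, then $\widecheck{\phi}$ would be a probability density all of whose moments vanish, forcing $\widecheck{\phi}$ to be a point mass at the origin — impossible for $\widecheck{\phi} \in \cS(\R^d)$. So the pointwise bound $\chi \lesssim P_{\leq 1}\chi$ is not available, and a tail estimate of your kind is needed. The one wrinkle in your version to flag explicitly: the gain $N^{-\sigma K}$ used for absorption is genuinely small only for $N \geq 2$; at $N = 1$ the factor is $O(1)$ and absorption does not close, so that fixed-scale case needs a separate direct argument (or a preliminary fattening of $\chi$ so that $\|\nabla\chi\|_{L^\infty}$ is small enough to make the Taylor-remainder constant less than one).
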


\begin{proof}
\leavevmode
\begin{enumerate}
	\item The translation invariant norm trivially bounds the non-invariant norm from above. For the reverse inequality, observe that an arbitrary cube of side length $2N^\sigma$ can be covered by at least $O(d)$ neighboring cubes of side length $N^\sigma$ centered on the lattice $N^\sigma \Z^d$. It follows that 
		\begin{align*}
			\sup_{y \in \R^d} ||| \chi_{y, N} \, f||_{L^2_x} 
				&\lesssim_d \sup_{j \in \Z^d} ||\chi_{j, N} \, f ||_{L^2_x},
		\end{align*}
	as desired. 
	
	\item Observe that a symmetric proof to that of boundedness of the projections \eqref{bounded} implies the left-hand side of \eqref{equiv2} controls the right-hand side. For the reverse inequality, observe that the kernel of $P_{\leq 1}$ is non-negative, so we can estimate pointwise $\chi \lesssim P_{\leq 1} \chi$. 
		
	\item Since $P_N$ are bounded with respect to these norms \eqref{bounded} and $P_N \widetilde{P_N} = P_N$, the right-hand side of \eqref{equiv3} controls the left-hand side. For the reverse inequality, we decompose $\widetilde{P_N} = P_{N/2} + P_N + P_{2N}$ and apply the triangle inequality.  
	
	\item For $M \geq N$, this follows immediately from $\chi_N \leq \chi_M$ and translation invariance \eqref{equiv1}. The case $M < N$ follows from the fact that each cube of side length $L \approx N^\sigma$ can be partitioned into $O((N/M)^{d \sigma})$-many 
	cubes of side length $L \approx M^\sigma$. 
\end{enumerate}
\end{proof}

\begin{remark}
	One can also use sharp cut-offs $\chi = \mathbb 1_{[-1, 1]^d}$ as indicated in the introduction, though it will be convenient for our commutator estimates to use smooth cut-offs. 
\end{remark}

\begin{proposition}[Basic embeddings]\label{prop:embed}
	For $s \geq 0$, the following embeddings hold:
	\begin{enumerate}	
		\item \label{rellich} Monotonicity of $\ell^\infty H^s$ in $s$, i.e. for $s < s'$, we have $\ell^\infty H^{s'} (\R^d) \hookrightarrow \ell^\infty H^s (\R^d)$ and 
			\begin{equation}
				||f||_{\ell^\infty H^s}
			\lesssim ||f||_{\ell^\infty H^{s'}}.
			\end{equation}
			
		\item Embedding into the space of tempered distributions, $	\ell^\infty H^s (\R^d) \hookrightarrow \cS(\R^d)^*$. 		
	\end{enumerate}
\end{proposition}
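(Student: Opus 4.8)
\emph{Item (1)} is immediate: for a dyadic integer $N \geq 1$ one has $N^{s} \leq N^{s'}$ when $s < s'$, so the non-negative sequence $\{N^{s}\|P_N f\|_{(\ell^\infty L^2)_N}\}_N$ is pointwise dominated by $\{N^{s'}\|P_N f\|_{(\ell^\infty L^2)_N}\}_N$, and taking $\ell^2_N$-norms (monotone under such domination) gives $\|f\|_{\ell^\infty H^s} \leq \|f\|_{\ell^\infty H^{s'}}$, hence both the set inclusion and the continuity of the embedding.

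For \emph{item (2)}, note that by construction an element $f \in \ell^\infty H^s(\R^d)$ is already a tempered distribution, so the content is that the inclusion is \emph{continuous}: I would produce a Schwartz seminorm $p$ (depending on an auxiliary parameter $M$ fixed below) with
\[
	|\langle f, \varphi\rangle| \lesssim_{d,s} \|f\|_{\ell^\infty H^s}\, p(\varphi) \qquad \text{for all } \varphi \in \cS(\R^d).
\]
The plan has two ingredients. First, a \emph{local $L^2$-bound}: covering an arbitrary cube $Q$ of side $L$ by $O_d((1 + L N^{-\sigma})^d)$ cubes of side comparable to $N^\sigma$, on each of which $\chi_{y,N}$ is bounded below for a suitable center $y$, and invoking the translation-invariant reformulation of Proposition~\ref{prop:equiv}, one gets $\|P_N f\|_{L^2(Q)} \lesssim_d (1 + LN^{-\sigma})^{d/2}\|P_N f\|_{(\ell^\infty L^2)_N}$; in particular $\|P_N f\|_{L^2(B_R)} \lesssim_d R^{d/2}\|P_N f\|_{(\ell^\infty L^2)_N}$ for $R \geq 1$, since $N^\sigma \geq 1$. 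Second, a \emph{pairing}: write $f = \sum_N P_N f$ (convergent in $\cS(\R^d)^*$) and split $\varphi = \sum_{k \geq 0}\varphi^{(k)}$ dyadically in space, with $\varphi^{(k)}$ supported in $B_{2^{k+1}}$ and $\|\varphi^{(k)}\|_{L^2} \lesssim_M 2^{-kM} p(\varphi)$ for every $M$ by the Schwartz decay of $\varphi$. Cauchy--Schwarz on $B_{2^{k+1}}$ together with the local bound gives $|\langle P_N f, \varphi^{(k)}\rangle| \lesssim 2^{k(d/2 - M)}\|P_N f\|_{(\ell^\infty L^2)_N}\, p(\varphi)$, which is summable in $k$ once $M > d/2$; summing in $N$ by Cauchy--Schwarz,
\[
	\sum_N \|P_N f\|_{(\ell^\infty L^2)_N} \leq \Big( \sum_N N^{-2s}\Big)^{1/2}\|f\|_{\ell^\infty H^s} \lesssim_s \|f\|_{\ell^\infty H^s},
\]
which converges exactly because $s > 0$. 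Equivalently, one can package this as the embedding of $\ell^\infty H^s(\R^d)$ into the weighted space $\{ f : \langle x \rangle^{-\gamma} P_N f \in \ell^2_N L^2_x\}$ for any $\gamma > d/2$, which visibly sits inside $\cS(\R^d)^*$.

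Two points need care, and this is where I expect the real work to lie. The partition of unity $\{\chi_{j,N}\}_{j \in \Z^d}$ has overlap of order $N^{\sigma d}$, so summing local $L^2$-norms over this partition loses a power of $N$; the fix is to sum instead over an honest \emph{disjoint} tiling of $\R^d$ by cubes of side $N^\sigma$ (comparable to the defining norm by Proposition~\ref{prop:equiv}) and to control the $O_d(1)$ tiles nearest the origin by the global $L^2$-norm of $\varphi$ rather than by (volume)$\times$(sup). Second, the $N$-summation uses $\sum_N N^{-2s} < \infty$ and so breaks at the endpoint $s = 0$; there one must additionally exploit the rapid decay of $\widehat\varphi$, replacing $\varphi^{(k)}$ by $\widetilde{P_N}\varphi^{(k)}$ via $\langle P_N f, \varphi^{(k)}\rangle = \langle P_N f, \widetilde{P_N}\varphi^{(k)}\rangle$ to gain an arbitrary power $N^{-M}$, which more than compensates the lost summability.
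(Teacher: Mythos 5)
Your argument is essentially the paper's: item (1) is identical, and for item (2) both proofs decompose $f$ in frequency, pair against pieces of $\phi$ localized at spatial scales (you via dyadic annuli, the paper via the lattice partition of unity $\{\chi_{j,N}\}_j$), and sum using the rapid decay of $\phi$ together with the Sobolev weight $N^{-s}$. If anything, you are more careful at the edges: the paper's own final bound contains a $\sum_N N^{-s}$ contribution from $j = 0$, which, exactly like your $\sum_N N^{-2s}$, diverges at $s = 0$ and would require the frequency-localisation fix you describe (pairing against $\widetilde{P_N}\varphi$ and using decay of $\widehat{\varphi}$) to actually cover the stated range $s \geq 0$.
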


\begin{proof}
\leavevmode
	\begin{enumerate}
		\item This follows immediately from $N^s \leq N^{s'}$ for all dyadic integers $N \in 2^\N$. 
		
		\item Consider the action of $f \in \ell^\infty H^s (\R^d)$ as a distribution on Schwartz functions $\phi \in \cS (\R^d)$. Decomposing in frequency space $f = \sum_N f_N$ and then in physical space $f_N = \sum_j \chi_{j, N} P_N f$, we write
			\begin{align*}
				\langle f, \phi \rangle
					= \sum_{j \in \Z^d} \sum_{N \in 2^\N} \langle \chi_{j, N}  f_N , \, \widetilde{\chi_{j, N}} \phi \rangle,
			\end{align*}
		where $\widetilde{\chi_{j, N}} \in C^\infty_c (\R^d)$ are fattened smooth cut-offs such that $\widetilde{\chi_{j, N}} \equiv 1$ on the support of $\chi_{j, N}$ and vanishing outside a fattened cube of side length $L \approx N^\sigma$. It follows that 
			\begin{align*}
				|\langle f, \phi \rangle|
					&\lesssim 
						\sum_{j \in \Z^d} \sum_{N \in 2^\N} || \chi_{j, N}  f_N ||_{L^2_x} ||\widetilde{\chi_{j, N}} \phi ||_{L^2_x}
					\\
					&\lesssim 
						\sum_{j \in \Z^d} \sum_{N \in 2^\N} \left(N^{-s} ||f||_{\ell^\infty H^s} \right) \left( \langle jN^2 \rangle^{-100d} || \langle x \rangle^{100d}\phi||_{L^\infty} \right) \lesssim || \langle x \rangle^{100d}\phi||_{L^\infty}  ||f||_{\ell^\infty H^s}.
			\end{align*}
	\end{enumerate}
\end{proof}

\begin{remark}
	The analogue of the Rellich-Kondrachov theorem would say that the embedding in (\ref{rellich}) is compact. Unfortunately, this is false, precisely because the $\ell^\infty H^s$-spaces are non-separable. As an explicit example, consider the sequence $\{ \exp(i \lambda_n \cdot x) \}_n \subseteq C^\infty (\R^d)$ for frequencies satisfying $\tfrac14 \leq |\lambda_n| \leq \tfrac12$. Then 
		\[
			||\exp(i \lambda_n \cdot x) ||_{\ell^\infty H^s} \sim 1,
		\]
	uniformly in $s$ and $n$, and 
		\[
			||\exp(i \lambda_n \cdot x) - \exp(i \lambda_m \cdot x) ||_{\ell^\infty H^s} \sim ||1 - \exp(i (\lambda_n - \lambda_m) \cdot x)||_{\ell^\infty H^s} \sim 1
		\]
	whenever $\lambda_n \neq \lambda_m$. Thus there is no way to extract a subsequence converging in any $\ell^\infty H^s$-norm. 	
\end{remark}

Finally, we will need a bound on the commutator between a physical multiplier $f(x)$ and frequency multiplier $\mathsf m(\nabla)$. The principal symbol of the commutator $[\mathsf m(\nabla), f(x)]$ is given precisely by the Poisson bracket $\{\mathsf m(\xi), f(x)\}$, so to leading order 
	\[
		[\mathsf m(\nabla), f(x)] \approx (\nabla_x f)(x) \cdot (\nabla_\xi \mathsf m)(\nabla). 
	\]
When the frequency multiplier is taken to be a Littlewood-Paley projection $P_N$, we obtain the Littlewood-Paley product rule 
$[P_N, f] g = O(\nabla f \, N^{-1}g)$ originally due to Tao \cite[Lemma 2]{Tao2001}. This will be useful for moving a derivative from a function $g$ localised to high frequencies $|\xi| \sim N$ onto a function $f$ localised to low frequencies $|\xi| \lesssim N$.

\begin{proposition}[Commutator bound]
	Let $N, M \in 2^\N$ be dyadic integers, then 
		\begin{equation}\label{eq:commutator0}
			|| [\mathsf m(\nabla), f] g||_{(\ell^\infty L^2)_M} 
				\lesssim ||\widecheck{\nabla_\xi \mathsf m}||_{L^1_x} ||\nabla_x f||_{L^\infty_x} ||g||_{(\ell^\infty L^2)_M}. 
		\end{equation}
	In particular, 
	\begin{align}\label{eq:commutator}
		||[P_N, f] g||_{(\ell^\infty L^2)_M} 
			\lesssim N^{-1} ||\nabla f||_{L^\infty_x} || g||_{(\ell^\infty L^2)_M}.
	\end{align}
\end{proposition}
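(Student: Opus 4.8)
The plan is to write the commutator as an integral operator against a kernel and then estimate in physical space, exploiting the fact that $(\ell^\infty L^2)_M$ is equivalent to a translation-invariant norm by \eqref{equiv1}. Write $K := \widecheck{\mathsf m}$ for the kernel of $\mathsf m(\nabla)$, so that
	\[
		[\mathsf m(\nabla), f] g (x) = \int_{\R^d} K(y) \big( f(x - y) - f(x) \big) g(x - y) \, dy.
	\]
The factor $f(x-y) - f(x)$ is the source of the gain: by the fundamental theorem of calculus, $|f(x - y) - f(x)| \leq |y| \, \|\nabla f\|_{L^\infty_x}$. The natural move is to absorb the factor $|y|$ into the kernel, writing $|y| |K(y)| = |y| |\widecheck{\mathsf m}(y)|$ and recognising $y \widecheck{\mathsf m}(y)$, up to constants, as $\widecheck{\nabla_\xi \mathsf m}(y)$ (since multiplication by $y$ on the physical side corresponds to $\nabla_\xi$ on the Fourier side). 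Thus the absolute value of the kernel of the commutator is pointwise controlled by $\|\nabla f\|_{L^\infty_x}$ times $|\widecheck{\nabla_\xi \mathsf m}(y)|$.

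Next I would run a Minkowski/Schur-type argument exactly as in the proof of Proposition \ref{prop:bounded}: applying the cut-off $\chi = \chi_{j,M}$ and the triangle inequality for the $L^2_x$-integral,
	\[
		\|\chi_{j,M} [\mathsf m(\nabla), f] g\|_{L^2_x}
			\lesssim \|\nabla f\|_{L^\infty_x} \int_{\R^d} |\widecheck{\nabla_\xi \mathsf m}(y)| \, \|\chi_{j,M}(x)\, g(x - y)\|_{L^2_x} \, dy
			\lesssim \|\widecheck{\nabla_\xi \mathsf m}\|_{L^1_x} \|\nabla f\|_{L^\infty_x} \|g\|_{(\ell^\infty L^2)_M},
	\]
where in the last step I bound $\|\chi_{j,M}(x) g(x-y)\|_{L^2_x}$ by $\sup_{y' \in \R^d}\|\chi_{y', M} g\|_{L^2_x} \sim \|g\|_{(\ell^\infty L^2)_M}$ uniformly in $y$, using the translation-invariant characterisation \eqref{equiv1}. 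Taking the supremum over $j \in \Z^d$ gives \eqref{commutator0}. For the specialisation \eqref{commutator}, I take $\mathsf m = \psi_N$; then $\nabla_\xi \psi_N(\xi) = N^{-1} (\nabla \psi)(\xi/N)$, and a change of variables shows $\|\widecheck{\nabla_\xi \psi_N}\|_{L^1_x} = N^{-1}\|\widecheck{\nabla \psi}\|_{L^1_x} \sim N^{-1}$ (the $N^d$ from rescaling the kernel amplitude cancels against the $N^{-d}$ from rescaling the spatial variable), which yields the claimed $N^{-1}$ gain.

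The only genuinely delicate point is the justification that $y \widecheck{\mathsf m}(y)$ equals (a constant multiple of) $\widecheck{\nabla_\xi \mathsf m}(y)$ and, more importantly, that the right-hand side of \eqref{commutator0} is finite and the manipulations are legitimate — i.e. that $\widecheck{\nabla_\xi \mathsf m} \in L^1_x$, which is precisely the hypothesis implicit in the statement (the estimate is vacuous otherwise). Assuming this, the pointwise kernel domination and the Minkowski argument are routine; one should be slightly careful that the difference $f(x-y) - f(x)$ is handled before, not after, taking absolute values inside the $y$-integral, so that the cancellation is actually used. No oscillation or stationary-phase input is needed — the estimate is purely a consequence of the mean value theorem plus the $L^1$ kernel bound, which is why it holds in this generality.
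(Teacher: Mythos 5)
Your proposal is correct and follows essentially the same route as the paper: write the commutator as an integral against the kernel $\widecheck{\mathsf m}$, use the fundamental theorem of calculus to extract a factor of $y$ from $f(x-y)-f(x)$, absorb $y\,\widecheck{\mathsf m}(y)$ into $\widecheck{\nabla_\xi \mathsf m}(y)$, and run Minkowski together with the translation-invariant characterisation \eqref{equiv1}. The only cosmetic difference is that you take absolute values early via the pointwise mean-value bound $|f(x-y)-f(x)| \le |y|\,\|\nabla f\|_{L^\infty}$, while the paper keeps the exact vectorial integral $-y\cdot\int_0^1 \nabla_x f(x-\theta y)\,d\theta$ and applies Minkowski directly; both lead to the same $L^1$ kernel bound.
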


\begin{proof}
	We can write the commutator as 
		\begin{align*}
			[\mathsf m(\nabla), f] g (x) 
				&= \int_{\R^d} \widecheck{\mathsf m}(y) \big( f(x - y) - f(x) \big) g(x - y) \, dy\\
				&=-\int_{\R^d} \widecheck{\mathsf m}(y) y \cdot \Big(  \int_0^1 \nabla_x f (x -\theta y) \, d\theta \Big) g(x - y)\, dy\\
				&=- \int_{\R^d} \widecheck{\nabla_\xi \mathsf m} (y)\cdot \Big(  \int_0^1 \nabla_x f (x -\theta y) \, d\theta \Big) g(x - y)\, dy,
		\end{align*}
	where the second line follows from the fundamental theorem of calculus, and the third line follows from properties of the Fourier transform. Placing $\nabla f$ in $L^\infty_x$ and $g$ in $(\ell^\infty L^2)_M$ using Minkowski's integral inequality and translation invariance \eqref{equiv1}, we conclude 
		\[
			||\chi [\mathsf m(\nabla), f] g||_{L^2_x} 
				\lesssim ||\widecheck{\nabla_\xi \mathsf m}||_{L^1_x} ||\nabla f||_{L^\infty_x} || g||_{(\ell^\infty L^2)_M}. 
		\]
	Since $\chi :=\chi_{j, M}$ was arbitrary, this completes the proof. 	
\end{proof}

\subsection{Sobolev embeddings}

Denote by $C^k (\R^d)$ the space of functions with continuous bounded derivatives up to order $k$. It forms a Banach space with respect to the norm
	\[
		||f||_{C^k} := \sum_{j = 0}^k ||\nabla^k u ||_{L^\infty_x}.
	\]
We aim to show the following Sobolev-type embeddings, 
	\[
		C^l (\R^d) \hookrightarrow \ell^\infty H^s (\R^d) \hookrightarrow C^k (\R^d)
	\]	
for $l > s + \tfrac{d}{2} \sigma$ and $s > k + \tfrac{d}{2}$. The former is reminiscent of the embedding on the torus $C^l (\TT^d) \hookrightarrow H^l (\TT^d)$, while the latter is the analogue of the classical Sobolev embedding $H^s (\R^d) \hookrightarrow C^k (\R^d)$. To prove the latter, we will need the analogues of the Bernstein and Sobolev-Bernstein inequalities for the $(\ell^\infty L^2)_N$-norms,

\begin{lemma}
	Let $N, M \in 2^\N$ be dyadic integers.
		\begin{enumerate}
			\item (Bernstein's inequality) We have
			\begin{equation}\label{eq:bernstein}
				||f_N||_{L^\infty_x} 
					\lesssim N^{\frac{d}2} || f_N ||_{(\ell^\infty L^2)_M}.
			\end{equation}
			
			\item (Sobolev-Bernstein inequality) For each $k \in \N$, we have 
			\begin{equation}\label{eq:sobolevbernstein}
				||\nabla^k f_N||_{(\ell^\infty L^2)_M} \lesssim N^k || f_N ||_{(\ell^2_j L^2_x)_M}.
			\end{equation}
		In particular, $\nabla^k$ is a bounded map from $\ell^\infty H^{s + k} (\R^d)$ to $\ell^\infty H^s (\R^d)$. 	
		\end{enumerate}
\end{lemma}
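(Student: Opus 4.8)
The plan is to treat both estimates through a single observation: $f_N$ and $\nabla^k f_N$ are each obtained from $f_N$ by convolution against a kernel concentrated at spatial scale $N^{-1}$. Indeed, since $\widetilde{P_N} P_N = P_N$ we may write $f_N = \widetilde{P_N} f_N = K_N \ast f_N$, where $K_N$ denotes the kernel of $\widetilde{P_N}$, and since $\nabla^k$ commutes with Fourier multipliers, $\nabla^k f_N = \nabla^k \widetilde{P_N} f_N = L_N \ast f_N$, where $L_N$ denotes the kernel of the Fourier multiplier $\nabla^k \widetilde{P_N}$. By rescaling one obtains the standard bounds $\|K_N\|_{L^1_x} \lesssim 1$ and $\|L_N\|_{L^1_x} \lesssim N^k$, together with the pointwise estimate $|K_N(x)| \lesssim N^d\langle Nx\rangle^{-100d}$. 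The structural point I would emphasise is that, since $\sigma > 0$ and $N, M \geq 1$, the kernel width $N^{-1}$ is at most $1 \leq M^\sigma$; that is, these kernels are narrow relative to the cubes of side $M^\sigma$ underlying the $(\ell^\infty L^2)_M$-norm.

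For the Sobolev--Bernstein inequality \eqref{sobolevbernstein} I would first record that, since by \eqref{equiv1} the $(\ell^\infty L^2)_M$-norm is comparable to the translation-invariant norm $\sup_{y \in \R^d}\|\chi_{y,M}\,\cdot\,\|_{L^2_x}$, Minkowski's integral inequality yields the Young-type bound $\|g \ast h\|_{(\ell^\infty L^2)_M} \lesssim \|g\|_{L^1_x}\|h\|_{(\ell^\infty L^2)_M}$ for all $g \in L^1(\R^d)$. Applied to $g = L_N$ and $h = f_N$, this immediately gives $\|\nabla^k f_N\|_{(\ell^\infty L^2)_M} \lesssim N^k \|f_N\|_{(\ell^\infty L^2)_M}$, which is even slightly stronger than \eqref{sobolevbernstein} since the $(\ell^\infty L^2)_M$-norm is dominated by the $(\ell^2_j L^2_x)_M$-norm. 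For the boundedness of $\nabla^k \colon \ell^\infty H^{s+k}(\R^d) \to \ell^\infty H^s(\R^d)$, I would use that $P_N$ commutes with $\nabla^k$, so $P_N(\nabla^k f) = \nabla^k f_N$; applying the previous bound with $M = N$ and taking the weighted $\ell^2_N$-norm in the definition of $\|\cdot\|_{\ell^\infty H^s}$ then closes the estimate. Here it is important that the right-hand side carries the $(\ell^\infty L^2)_N$-norm rather than its $\ell^2_j$-variant, which is why I would prove \eqref{sobolevbernstein} in the sharper form above.

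For Bernstein's inequality \eqref{bernstein} I would fix $x \in \R^d$ and split the convolution kernel via Cauchy--Schwarz, obtaining $|f_N(x)| \le \|K_N\|_{L^1_x}^{1/2}\big( \int_{\R^d} |K_N(x-y)|\,|f_N(y)|^2\,dy \big)^{1/2} \lesssim \big( \int_{\R^d} |K_N(x-y)|\,|f_N(y)|^2\,dy \big)^{1/2}$, thereby reducing matters to the bound $\int_{\R^d} |K_N(x-y)|\,|f_N(y)|^2\,dy \lesssim N^d\|f_N\|_{(\ell^\infty L^2)_M}^2$. To prove the latter I would tile $\R^d$ by cubes $Q$ of side $M^\sigma$, estimate $\int_Q |f_N|^2 \lesssim_d \|f_N\|_{(\ell^\infty L^2)_M}^2$ on each cube (covering $Q$ by $O_d(1)$ cubes of side $\tfrac12 M^\sigma$ on each of which some translate $\chi_{y,M}$ is identically $1$, then invoking the monotonicity of squaring on $[0,\infty)$ together with \eqref{equiv1}), and use $|K_N(x-y)| \lesssim N^d\langle N\operatorname{dist}(x,Q)\rangle^{-100d}$ for $y \in Q$; the resulting sum over the tiling converges to $O_d(1)$ uniformly in $x$ precisely because $NM^\sigma \ge 1$ forces it to be dominated by $\sum_{m \ge 0} m^{d-1}\langle m\rangle^{-100d}$. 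Taking a supremum over $x$ then completes the proof. Alternatively, one could first reduce to the case $M = 1$ by the monotonicity \eqref{monotone} of the $(\ell^\infty L^2)_N$-norms and appeal to the standard frequency-localised $L^\infty$--$L^2$ Bernstein estimate.

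I expect the only genuine obstacle to lie in this last step: combining the rapid decay of the rescaled Schwartz kernel with the cube decomposition of the $(\ell^\infty L^2)_M$-norm so that the implicit constant is independent of $x$, $N$ and $M$. This is precisely where the hypothesis $\sigma > 0$ is used — it forces $M^\sigma \ge 1$, hence $NM^\sigma \ge 1$, which is exactly what makes the geometric sum over the tiling converge uniformly. Everything else reduces to routine convolution-against-an-$L^1$-kernel arguments, which go through cleanly because the $(\ell^\infty L^2)_M$-norm is translation invariant up to constants.
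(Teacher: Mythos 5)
Your argument is correct, and it follows a genuinely different route from the paper's. The paper exploits the equivalent norm with frequency-localised cut-offs from \eqref{equiv2}: writing $\chi := P_{\leq 1}\chi_{j,M}$, the product $\chi f_N$ has Fourier support in $|\xi| \lesssim N$, so the classical $L^\infty$--$L^2$ Bernstein inequality applied to $\chi f_N$ gives \eqref{bernstein} after taking a supremum in $j$; for \eqref{sobolevbernstein} the paper reduces to $k=1$, commutes $\nabla$ past $\chi$, applies the classical Sobolev--Bernstein inequality to the frequency-localised $\chi f_N$, and controls the commutator $[\chi,\nabla] = \nabla\chi$ by the smallness $|\nabla\chi| \lesssim M^{-\sigma} \leq 1$. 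Your proof instead works directly with the reproducing kernel $K_N$ of $\widetilde{P_N}$ and the $L^1$-decay of $L_N$. For Sobolev--Bernstein your Young-type bound $\|g*h\|_{(\ell^\infty L^2)_M} \lesssim \|g\|_{L^1}\|h\|_{(\ell^\infty L^2)_M}$ is a clean one-line argument that handles all $k$ simultaneously and dispenses with both the induction and the commutator; it is also exactly the estimate used in the paper's proof of Proposition \ref{prop:bounded}, so you are merely reusing it. For Bernstein your Cauchy--Schwarz-plus-tiling argument is somewhat longer than the paper's, but it has the virtue of making the role of the hypothesis $\sigma > 0$ explicit: it is precisely $N M^\sigma \geq 1$ that makes the geometric sum $\sum_Q \langle N\operatorname{dist}(x,Q)\rangle^{-100d}$ over the tiling by cubes of side $M^\sigma$ converge uniformly in $x, N, M$. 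Your side remark that one could reduce to $M=1$ via monotonicity \eqref{monotone} and then invoke the standard local Bernstein estimate is also valid, and is probably the shortest route of all. One small note: the sharper form of \eqref{sobolevbernstein} you prove, with $(\ell^\infty L^2)_M$ on both sides, is what the statement is actually asking for — the $(\ell^2_j L^2_x)_M$ in the displayed inequality is a typo, as the paper's own proof produces an $(\ell^\infty L^2)_M$ bound — and you are right that the $\ell^\infty$ version is what the boundedness of $\nabla^k$ requires.
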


\begin{proof}
	To apply the usual Bernstein and Sobolev-Bernstein inequalities, we use frequency-localised cut-offs as in \eqref{equiv2} rather than sharp cut-offs. Abusing notation, we write $\chi := P_{\leq 1} \chi_{j, M}$. 
	\begin{enumerate}
		\item By translation invariance, we can estimate 
			\begin{align*}
				||f_N||_{L^\infty_x} 
					&\lesssim \sup_{j \in \Z^d} || \chi f_N ||_{L^\infty_x}. 
			\end{align*}
		The product $\chi f_N$ has frequency support in $|\xi| \lesssim N$, so by Bernstein's inequality, we can estimate the right-hand side by 
			\[
				|| \chi f_N ||_{L^\infty_x}
					\lesssim N^{\frac{d}2} ||f_N ||_{(\ell^\infty L^2)_M}. 
			\]
		
		\item It suffices by induction to prove the result for $k = 1$. We commute the derivative with the cut-off and apply the triangle inequality to write 
		\begin{align*}
			||\chi \nabla f_N||_{L^2_x} 
				\leq ||\nabla (\chi f_N)||_{L^2_x} + ||[\chi, \nabla] P_N f||_{L^2_x}.
		\end{align*}
	The product $\chi f_N$ has frequency support in $|\xi| \lesssim N$, so by the Sobolev-Bernstein inequality	
		\[
			||\nabla (\chi f_N)||_{L^2_x} \lesssim N ||\chi f_N ||_{L^2_x} \lesssim N ||f_N||_{(\ell^\infty L^2)_M} .
		\]
	The commutator $[\chi, \nabla] = \nabla\chi$	contributes amplitude $|\chi'| \leq M^{-\sigma} \leq 1$ and is localised in space to neighboring cubes of side length $L \approx M^\sigma$, so its contribution is of lower order, 
		\[
			||[\chi, \nabla] f_N ||_{L^2_x} 
				\lesssim \sum_{k = j + O(1)} ||\chi_k f_N ||_{L^2_x} \lesssim N ||f_N ||_{(\ell^\infty L^2)_M} .
		\]
	This completes the proof. 	
	\end{enumerate}
\end{proof}

\begin{proposition}[Sobolev embedding]
	\leavevmode 
	\begin{enumerate}
		\item For $l > s + \tfrac{d}{2} \sigma$, we have $C^l (\R^d) \hookrightarrow \ell^\infty H^s (\R^d)$ and 
		\begin{equation}
			||f||_{\ell^\infty H^s} \lesssim ||f||_{C^l}. \label{eq:sobolev0}
		\end{equation}

		\item For $s > k + \tfrac{d}{2}$, we have $\ell^\infty H^s (\R^d) \hookrightarrow C^k (\R^d)$ and 
		\begin{equation}\label{eq:sobolev}
			||f||_{C^k} 
				\lesssim ||f||_{\ell^\infty H^s}.
		\end{equation}
	\end{enumerate}
\end{proposition}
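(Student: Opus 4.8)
The plan is to prove the two embeddings separately, each by a frequency-by-frequency argument using the dyadic partition of unity together with the Bernstein/Sobolev-Bernstein inequalities just established. For the second embedding $\ell^\infty H^s(\R^d) \hookrightarrow C^k(\R^d)$, the argument is the more transparent one, so I would do it first in spirit. Writing $f = \sum_N f_N$, it suffices to bound $\|\nabla^j f\|_{L^\infty_x}$ for $0 \le j \le k$. By the triangle inequality and Bernstein \eqref{bernstein} (applied to $\nabla^j f_N$, which is still frequency-localised at $|\xi|\approx N$), one gets $\|\nabla^j f_N\|_{L^\infty_x} \lesssim N^{d/2}\|\nabla^j f_N\|_{(\ell^\infty L^2)_N}$, and then Sobolev-Bernstein \eqref{sobolevbernstein} gives $\lesssim N^{j + d/2}\|f_N\|_{(\ell^\infty L^2)_N} \le N^{k + d/2}\|f_N\|_{(\ell^\infty L^2)_N}$. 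Summing over dyadic $N$ and invoking Cauchy-Schwarz against the $\ell^2_N$ norm defining $\|f\|_{\ell^\infty H^s}$ — using $\sum_N N^{2(k+d/2 - s)} < \infty$ since $s > k + \tfrac d2$ — yields $\|\nabla^j f\|_{L^\infty_x} \lesssim \|f\|_{\ell^\infty H^s}$. One should also note the sum converges in $L^\infty$ so the limit is genuinely continuous; this is where the gap $s > k + \tfrac d2$ (rather than $\ge$) is used.

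For the first embedding $C^l(\R^d) \hookrightarrow \ell^\infty H^s(\R^d)$ the key point is to exploit the fact that the local $L^2$ norm is taken over cubes of side $N^\sigma$, whose volume $N^{d\sigma}$ grows with $N$; this is exactly what forces the extra $\tfrac d2 \sigma$ loss. Given $f \in C^l$, I would estimate $\|f_N\|_{(\ell^\infty L^2)_N} = \sup_j \|\chi_{j,N} f_N\|_{L^2_x} \lesssim N^{d\sigma/2}\|f_N\|_{L^\infty_x}$, simply because $\chi_{j,N}$ is supported in a cube of volume $\lesssim N^{d\sigma}$. It then remains to show $\|f_N\|_{L^\infty_x} \lesssim N^{-l}\|f\|_{C^l}$, i.e. that $P_N$ gains $l$ powers of $N^{-1}$ on $C^l$ functions; for integer $l$ this is immediate by writing $\psi_N(\xi) = N^{-l}|\xi|^{l}\,\widetilde\psi_N(\xi)\,(\text{symbol})$ — more carefully, $P_N = N^{-l} (\text{bounded Fourier multiplier}) \circ \nabla^l$-type operators with uniformly $L^1$ kernels — so $\|f_N\|_{L^\infty_x} \lesssim N^{-l}\|\nabla^l f\|_{L^\infty_x}$, and for non-integer $l$ one interpolates or uses the standard fractional-derivative version of this mapping property of Littlewood-Paley projections. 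Combining, $N^s\|f_N\|_{(\ell^\infty L^2)_N} \lesssim N^{s + d\sigma/2 - l}\|f\|_{C^l}$, and square-summing over $N$ converges precisely because $l > s + \tfrac d2\sigma$.

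The main obstacle — really the only non-routine point — is the estimate $\|P_N f\|_{L^\infty_x} \lesssim N^{-l}\|f\|_{C^l}$ for non-integer $l$, since one cannot literally pull out $\nabla^l$. I expect to handle this by writing $\psi_N(\xi) = \langle\xi\rangle^{-l}\,m_N(\xi)$ where $m_N$ is supported in $|\xi|\approx N$ with symbol bounds making $N^{-l}\widecheck{m_N}$ uniformly $L^1$ (for $N \ge 2$; the $N=1$ piece is trivially bounded), together with the fact that $\langle\nabla\rangle^{-l}$ applied after $l$ integer derivatives and a remainder can be controlled in $L^\infty$ via the Hölder-space characterization — or, most cleanly, by simply citing the standard fact that Littlewood-Paley projections map $C^l \to C^0$ with norm $\lesssim N^{-l}$, which is classical Besov space theory ($C^l = B^l_{\infty,\infty}$ for non-integer $l$). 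Everything else is bookkeeping with the dyadic sums and Cauchy-Schwarz.
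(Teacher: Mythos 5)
Your proof is correct and follows essentially the same route as the paper: both embeddings are proved frequency-by-frequency, using the volume factor $\|\chi_N\|_{L^2}\sim N^{d\sigma/2}\|\chi\|_{L^2}$ together with $\|P_N f\|_{L^\infty}\lesssim N^{-l}\|\nabla^l f\|_{L^\infty}$ for the first, Bernstein and Sobolev--Bernstein for the second, and Cauchy--Schwarz in the dyadic parameter to close. The only cosmetic differences are that the paper reduces the second embedding to $k=0$ via the $(\ell^\infty L^2)_N$-Sobolev--Bernstein inequality before invoking Bernstein (whereas you treat all $j\le k$ at once), and that the paper's definition of $C^l$ takes $l$ to be an integer, so your careful aside on the non-integer case is not needed.
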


\begin{proof}
\leavevmode 
	\begin{enumerate}
		\item For low frequencies $N = 1$, we have the trivial bound 
		\[
			||\chi_j \, f_1||_{L^2_x}
				\leq ||\chi||_{L^2_x} ||f||_{L^\infty_x}. 	
		\]
	For high frequencies $N \geq 2$, it follows from scaling and the Sobolev-Bernstein inequality that 
		\begin{align*} 
			||\chi_{j, N} \, f_N||_{L^2_x}
				&\leq || \chi_{N} ||_{L^2_x} ||f_N||_{L^\infty_x} \\
				&\lesssim N^{\frac{d}{2} \sigma} || \chi||_{L^2_x} N^{- k} || \nabla^k f||_{L^\infty}.
		\end{align*}	
	Multiplying both sides by $N^s$, we see that the right-hand side is square-summable in $N \in 2^\N$ provided that $k > s + \frac{d}{2}\sigma$. Collecting these inequalities, we conclude	
		\begin{align*}
			||f||_{\ell^\infty H^s} 
				\lesssim || N^{s -k + \frac{d}{2} \sigma} ||_{\ell^2_N} ||\chi||_{L^2_x} \left( || f||_{L^\infty_x} + ||\nabla^k f||_{L^\infty_x} \right)\lesssim_{k, s} ||f||_{C^k}.
	\end{align*}

	\item It suffices by Sobolev-Bernstein \eqref{sobolevbernstein} to prove the result for $k = 0$. 
	Decomposing in frequency space $f = \sum_N  f_N$, we can estimate 
		\begin{align*}
			||f||_{L^\infty} 
				&\leq\sum_{N \in 2^{\N}} ||f_N||_{L^\infty}\\
				&\lesssim \sum_{N \in 2^\N} N^{\frac{d}2} ||f_N||_{(\ell^\infty L^2)_N} \\
				&\lesssim || N^{\frac{d}2} N^{-s} ||_{\ell^2_N} ||f||_{\ell^\infty H^s} \lesssim_s ||f||_{\ell^\infty H^s},
		\end{align*}
	applying the triangle inequality in the first line, Bernstein's inequality \eqref{bernstein} in the second, and Cauchy-Schwartz in the third.
	\end{enumerate}
\end{proof}

The proof of \eqref{sobolev} actually only furnishes an \textit{a priori} estimate; to show that $f \in \ell^\infty H^s (\R^d)$ is indeed continuously differentiable up to order $k$, we need an approximation by smooth functions. The natural candidates are the localisations to low frequency $\{f_{\leq N}\}_N \subseteq C^\infty (\R^d)$, which are in fact real analytic by the Paley-Wiener theorem. 

\begin{corollary}[Smooth approximation]
	Let $f \in \ell^\infty H^s (\R^d)$, then $f_{\leq N} \in C^\infty (\R^d)$ and 
		\begin{equation}\label{eq:approx}
			\lim_{N \to \infty} ||f_{\leq N} - f||_{\ell^\infty H^s} = 0.
		\end{equation}
	In particular, the space of smooth functions with bounded derivatives $C^\infty (\R^d)$ is dense in $\ell^\infty H^s (\R^d)$.
\end{corollary}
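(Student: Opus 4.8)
The plan is to prove the two assertions in turn.

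\emph{First}, to see $f_{\leq N}\in C^\infty(\R^d)$: the Fourier transform $\widehat{f_{\leq N}}=\phi_N\widehat f$ is supported in the ball $|\xi|\leq\tfrac{11}{10}N$, so by the Paley--Wiener theorem $f_{\leq N}$ agrees on $\R^d$ with an entire function, in particular it is real-analytic; it then remains to bound $f_{\leq N}$ and its derivatives. For this I would use that the truncation only involves the finitely many dyadic scales $M\lesssim N$. Writing $f_{\leq N}=\sum_{M\lesssim N}P_M(\phi_N f)$ and noting that $\psi_M\phi_N$ is, for each such $M$, a smooth bump adapted to $|\xi|\sim M$ (with symbol bounds $|\nabla_\xi^k(\psi_M\phi_N)|\lesssim M^{-k}$, since for $M\ll N$ it is simply $\psi_M$ and otherwise $|\nabla_\xi^k\phi_N|\lesssim N^{-k}\sim M^{-k}$ on $\operatorname{supp}\psi_M$), the Minkowski / $L^1$-kernel estimate behind Proposition \ref{prop:bounded} together with $\widetilde{\psi_M}\equiv 1$ on $\operatorname{supp}\psi_M$ gives $\|P_M(\phi_N f)\|_{(\ell^\infty L^2)_M}\lesssim\|\widetilde{P_M}f\|_{(\ell^\infty L^2)_M}$. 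Hence, for every $s'\geq s$,
\[
 \|f_{\leq N}\|_{\ell^\infty H^{s'}}
  \lesssim \Big( \sum_{M\lesssim N} M^{2(s'-s)}\,M^{2s}\,\|\widetilde{P_M}f\|_{(\ell^\infty L^2)_M}^2 \Big)^{1/2}
  \lesssim_{s'} N^{s'-s}\,\|f\|_{\ell^\infty H^s}<\infty,
\]
using the equivalent norm \eqref{equiv3}; feeding this into the Sobolev embedding $\ell^\infty H^{s'}\hookrightarrow C^k$ for $s'>k+\tfrac d2$, i.e. \eqref{sobolev}, shows $f_{\leq N}\in C^k$ for every $k$, hence $f_{\leq N}\in C^\infty(\R^d)$.

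\emph{Second}, for the convergence \eqref{approx}: the point is that $f-f_{\leq N}=(1-\phi_N)(\nabla)f$ has Fourier support in $\{|\xi|\geq N\}$, so $P_M(f-f_{\leq N})=0$ unless $M\gtrsim N$, whence
\[
 \|f-f_{\leq N}\|_{\ell^\infty H^s}^2=\sum_{M\gtrsim N} M^{2s}\,\big\|P_M\big((1-\phi_N)(\nabla)f\big)\big\|_{(\ell^\infty L^2)_M}^2.
\]
For each such $M$ the symbol $\psi_M(1-\phi_N)$ is again a bump adapted to $|\xi|\sim M$ with $|\nabla_\xi^k(\psi_M(1-\phi_N))|\lesssim M^{-k}$ \emph{uniformly in $N\lesssim M$} (for $M\gg N$ it equals $\psi_M$, and for $M\sim N$ the factor $1-\phi_N$ has $|\nabla_\xi^k(1-\phi_N)|\lesssim N^{-k}\sim M^{-k}$ on $\operatorname{supp}\psi_M$), so its inverse Fourier transform has $L^1$-norm $\lesssim 1$ uniformly; inserting $\widetilde{P_M}$ as before and invoking the Minkowski argument of Proposition \ref{prop:bounded} yields $\|P_M((1-\phi_N)(\nabla)f)\|_{(\ell^\infty L^2)_M}\lesssim\|\widetilde{P_M}f\|_{(\ell^\infty L^2)_M}$. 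By \eqref{equiv3} this gives
\[
 \|f-f_{\leq N}\|_{\ell^\infty H^s}^2\lesssim\sum_{M\gtrsim N} M^{2s}\,\|\widetilde{P_M}f\|_{(\ell^\infty L^2)_M}^2,
\]
which is the tail of the convergent series $\sum_M M^{2s}\|\widetilde{P_M}f\|_{(\ell^\infty L^2)_M}^2\sim\|f\|_{\ell^\infty H^s}^2$, hence tends to $0$ as $N\to\infty$.

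\emph{Finally}, the density statement is immediate: for $f\in\ell^\infty H^s(\R^d)$ the sequence $\{f_{\leq N}\}_N$ lies in $C^\infty(\R^d)$ (in fact consists of real-analytic functions) and converges to $f$ in $\ell^\infty H^s$. It is worth noting that this explicit approximation is genuinely needed, since $\ell^\infty H^s$ is non-separable and, e.g., $C^\infty_c(\R^d)$ is \emph{not} dense. The only step requiring real care is the uniform-in-$N$ control of the multipliers $\psi_M(1-\phi_N)$ (and $\psi_M\phi_N$) near the transition scale $M\sim N$, as this is precisely what lets us replace $P_M((1-\phi_N)(\nabla)f)$ by $\widetilde{P_M}f$ with an $N$-independent constant; once this is granted, the convergence reduces to the convergence of the series $\sum_M M^{2s}\|\widetilde{P_M}f\|_{(\ell^\infty L^2)_M}^2$.
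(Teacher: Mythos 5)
Your proof is correct and follows essentially the same route as the paper's: Paley--Wiener plus the Sobolev embedding \eqref{sobolev} for smoothness of $f_{\leq N}$, and the observation that $f - f_{\leq N}$ is supported at frequencies $|\xi|\gtrsim N$ so its $\ell^\infty H^s$-norm is a tail of the convergent series $\sum_M M^{2s}\|\widetilde{P_M}f\|_{(\ell^\infty L^2)_M}^2$. You supply more detail than the paper (in particular the uniform-in-$N$ symbol bounds on $\psi_M\phi_N$ and $\psi_M(1-\phi_N)$ near $M\sim N$), which is precisely the justification the paper leaves implicit when it passes from $\|P_M(f-f_{\leq N})\|_{(\ell^\infty L^2)_M}$ to $\|\widetilde{P_M}f\|_{(\ell^\infty L^2)_M}$.
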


\begin{proof}
	We know that $f_{\leq N} \in \ell^\infty H^s (\R^d)$ for every $s \geq 0$ since it has compact frequency support. It follows from Paley-Wiener and Sobolev embedding \eqref{sobolev} that it is smooth with bounded derivatives $f_{\leq N} \in C^\infty (\R^d)$. On the other hand, $f_{\leq N}- f$ has frequency support in $|\xi| \gtrsim N$, so	
		\[
			||f_{\leq N} - f||_{\ell^\infty H^s} \leq \left|\left| A^s ||u_A||_{(\ell^\infty L^2)_A}  \right|\right|_{\ell^2_A (A \gtrsim N)} \overset{N \to \infty}{\longrightarrow} 0
		\]
	by monotone convergence. 	
\end{proof}

\subsection{Paradifferential calculus}\label{sec:paraproduct}
	
To handle the non-linear terms in the equation \eqref{dispersive}, we will need to record some bilinear and non-linear estimates. The statements and proofs are simply adaptations of the standard ones on $L^p_x$-spaces, as detailed in e.g. \cite[Appendix A]{Tao2006}, to the $\ell^\infty H^s$-spaces. Indeed, we begin with the well-known the Littlewood-Paley trichotomy, which asserts that if $f_A$ and $g_B$ have frequency support on $|\xi| \sim A$ and $|\xi| \sim B$ respectively, then the projection of their product $P_N (f_A g_B)$ to frequency $|\xi| \sim N$ vanishes unless one of the following holds:
	\begin{itemize}
		\item \textit{low-high interactions:} $f_A$ has significantly lower frequency that $g_B$, with $B \sim N$ and $A \ll B$,
		\item \textit{high-low interactions:} $f_A$ has significantly higher frequency that $g_B$, with $A \sim N$ and $B \ll A$,
		\item \textit{high-high interactions:} $f_A$ and $g_B$ have comparable frequencies, with $A \sim B$ and $A, B \gtrsim N$. 
	\end{itemize}
In accord with the trichotomy, we can decompose the projection of a generic product $P_N (f g)$ into a low-high, high-low, and high-high paraproducts, 
	\begin{align*}
		P_N (fg) 
			&= \sum_{A, B \in 2^\N} P_N (P_A f\, P_B g)\\
			&= P_N \Big( P_{\leq \frac{N}{128}} f\, \widetilde{P_N} g\Big) + P_N \Big( \widetilde{P_N} f \, P_{\leq \frac{N}{128}} g\Big) + \sum_{\substack{A \sim B \\ A, B > \frac{N}{128}}} P_N (P_A f \, P_B g) \\
			&=: \big(\mathsf T_f g \big)_N + \big( \mathsf T_g f)_N + \Pi (f, g)_N,
	\end{align*}
with suitable modifications in the low frequency range $N = 1, 2, 4, \dots, 128$ since we use inhomogeneous Littlewood-Paley projections. 

\begin{proposition}[Endpoint Coifman-Meyer estimates]
	The paraproducts satisfy 
		\begin{align}
			||\mathsf T_f g ||_{\ell^\infty H^0}
				&\lesssim ||f||_{L^\infty_x} ||g||_{\ell^\infty H^0},\label{eq:lowhigh1} \\ 
			||\mathsf T_f g ||_{\ell^\infty H^0}
				&\lesssim ||f||_{\ell^\infty H^0} ||g||_{\ell^\infty H^{\frac{d}2}}\label{eq:lowhigh2},\\
			||\Pi(f, g)||_{\ell^\infty H^0} 
				&\lesssim ||f||_{\ell^\infty H^0} ||g||_{\ell^\infty H^{\frac{d}2+}}\label{eq:highhigh}.	
		\end{align}
	The analogous estimates to \eqref{lowhigh1} and \eqref{lowhigh2} hold for the high-low paraproduct $T_g f$. Collecting these inequalities, the following product estimate holds,
		\begin{align}\label{eq:product0}
			||fg||_{\ell^\infty H^0} \lesssim ||f||_{\ell^\infty H^0} ||g||_{\ell^\infty H^{\frac{d}2+}}.
		\end{align}	
\end{proposition}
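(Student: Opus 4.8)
The plan is to prove the three paraproduct estimates \eqref{lowhigh1}, \eqref{lowhigh2}, \eqref{highhigh} separately by Littlewood-Paley decomposition, and then assemble the product estimate \eqref{product0} from them. Throughout, I would work with the equivalent norm \eqref{equiv3} using fattened projections, so that after localising a product $P_N(P_A f\, P_B g)$ I can discard the outer $P_N$ at the cost of replacing it by $\widetilde{P_N}$, using Proposition \ref{prop:bounded} for boundedness of $P_N$ on $(\ell^\infty L^2)_M$, and the near-orthogonality $P_N\widetilde{P_N} = P_N$.

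For the low-high estimate \eqref{lowhigh1}: since $(\mathsf T_f g)_N = P_N(P_{\leq N/128} f\, \widetilde{P_N} g)$, I would use boundedness of $P_N$ on the $(\ell^\infty L^2)_N$-norm, then Hölder to peel off $\|P_{\leq N/128} f\|_{L^\infty_x} \leq \|f\|_{L^\infty_x}$ (the low-frequency projection is bounded on $L^\infty_x$ since its kernel is $L^1$), leaving $N^0 \|\widetilde{P_N} g\|_{(\ell^\infty L^2)_N}$, which is square-summable in $N$ to $\|g\|_{\ell^\infty H^0}$ by \eqref{equiv3}. For \eqref{lowhigh2}, instead of placing $P_{\leq N/128} f$ in $L^\infty_x$ I would decompose it dyadically as $\sum_{A \leq N/128} P_A f$ and use Bernstein \eqref{bernstein} to get $\|P_A f\|_{L^\infty_x} \lesssim A^{d/2}\|P_A f\|_{(\ell^\infty L^2)_A} \lesssim A^{d/2} A^{-0}\|f\|_{\ell^\infty H^0}$; one also must pass from the $(\ell^\infty L^2)_A$-norm to the $(\ell^\infty L^2)_N$-norm, which by monotonicity \eqref{monotone} (here $A \leq N$, the favorable case) costs nothing. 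The $A^{d/2}$ factor is then summed against $N^{-d/2}$ coming from pairing with $\|g\|_{\ell^\infty H^{d/2}}$; the sum $\sum_{A \leq N} (A/N)^{d/2}$ converges geometrically, and square-summing in $N$ gives \eqref{lowhigh2}. The high-low estimate for $\mathsf T_g f$ is symmetric. For the high-high estimate \eqref{highhigh}, I would write $\Pi(f,g)_N = \sum_{A \sim B,\ A,B > N/128} P_N(P_A f\, P_B g)$, apply boundedness of $P_N$ and Hölder placing $P_A f$ in $(\ell^\infty L^2)_A$ and $P_B g$ in $L^\infty_x$, using Bernstein to get $\|P_B g\|_{L^\infty_x} \lesssim B^{d/2}\|P_B g\|_{(\ell^\infty L^2)_B}$; again monotonicity \eqref{monotone} must be invoked, but now in the unfavorable direction since we land in the $(\ell^\infty L^2)_N$-norm with $N \leq A$, costing $(A/N)^{d\sigma}$ — this is harmless because it is a fixed power of $A/N$ summed against the gain $A^{-(d/2+\epsilon)}$ from $\|g\|_{\ell^\infty H^{d/2+}}$ after summing the $N \lesssim A$ geometric series, leaving room to square-sum in $N$. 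Finally \eqref{product0} follows by writing $P_N(fg) = (\mathsf T_f g)_N + (\mathsf T_g f)_N + \Pi(f,g)_N$, applying \eqref{lowhigh1} (or rather, for the first term, one needs $L^\infty_x \hookleftarrow \ell^\infty H^{d/2+}$ from Bernstein/Sobolev embedding \eqref{sobolev}, so \eqref{lowhigh1} upgrades to the stated form), \eqref{lowhigh2} for the symmetric term, and \eqref{highhigh}, then taking $\ell^2_N$.

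The step I expect to be the main obstacle is the bookkeeping in the high-high term \eqref{highhigh}: one must carefully track the mismatch between the spatial scale $A^\sigma$ (respectively $B^\sigma$) at which the frequency-$A$ pieces are naturally measured and the coarser scale $N^\sigma$ at which the output is measured, and verify via \eqref{monotone} that the resulting loss $(A/N)^{d\sigma}$ is absorbed by the regularity surplus $\epsilon$ in $\ell^\infty H^{d/2+}$ together with the geometric summation in $N \leq A$ and then in $A$. The low-frequency modifications ($N \leq 128$) are a routine annoyance handled by treating finitely many terms by hand. One subtlety worth flagging is that in \eqref{lowhigh1} the hypothesis is genuinely $f \in L^\infty_x$ rather than $f \in \ell^\infty H^{d/2+}$, so one should be careful to only use $\|P_{\leq N/128}f\|_{L^\infty_x} \lesssim \|f\|_{L^\infty_x}$ there and not accidentally invoke a Sobolev embedding.
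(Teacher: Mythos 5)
Your overall strategy — dyadic decomposition of each paraproduct, Bernstein to move one factor into $L^\infty_x$, then summation — is essentially the paper's proof. However, two substantive errors in the details should be flagged.

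First, in the high-high estimate you have the direction of the monotonicity inequality \eqref{monotone} reversed, and your error compounds. After H\"older you are left with $\|P_A f\|_{(\ell^\infty L^2)_N}$ with $N \lesssim A$. The inequality \eqref{monotone} says precisely that small cubes are controlled by big cubes for free, so $\|P_A f\|_{(\ell^\infty L^2)_N} \lesssim \|P_A f\|_{(\ell^\infty L^2)_A}$ costs only an $O(1)$ constant in this regime; the $(N/M)^{d\sigma}$-loss appears only when passing in the other direction, from the norm at a big scale to the norm at a small scale. The $(A/N)^{d\sigma}$-penalty you identified as the main obstacle does not exist. Worse, even if it did, your claimed absorption would fail: after pairing with $A^{-(d/2+\epsilon)}$ you would be summing $(A/N)^{d\sigma}A^{-\epsilon}$ over $A\gtrsim N$, which diverges whenever $d\sigma > \epsilon$ — and $\epsilon$ is arbitrarily small. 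Your conclusion happens to survive because the actual situation is strictly better than you feared, but the verification you describe rests on a misreading of the lemma, and it is exactly the step you singled out as delicate.

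Second, in assembling \eqref{product0} the assignment of \eqref{lowhigh1} and \eqref{lowhigh2} to the two unbalanced paraproducts is swapped. The target puts the $\ell^\infty H^0$-norm on $f$ and the $\ell^\infty H^{\frac{d}{2}+}$-norm on $g$. For $\mathsf T_f g$ one should apply \eqref{lowhigh2} directly, which gives $\|f\|_{\ell^\infty H^0}\|g\|_{\ell^\infty H^{\frac{d}{2}}}$. For the symmetric term $\mathsf T_g f$ one should apply the $\mathsf T_g f$-analogue of \eqref{lowhigh1}, giving $\|g\|_{L^\infty_x}\|f\|_{\ell^\infty H^0}$, and then use Sobolev embedding \eqref{sobolev} to control $\|g\|_{L^\infty_x}$ by $\|g\|_{\ell^\infty H^{\frac{d}{2}+}}$. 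As written, applying \eqref{lowhigh1} plus Sobolev embedding to $\mathsf T_f g$ produces $\|f\|_{\ell^\infty H^{\frac{d}{2}+}}\|g\|_{\ell^\infty H^0}$, which puts the norms on the wrong factors.

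Your treatment of \eqref{lowhigh1} and \eqref{lowhigh2} themselves is correct and matches the paper, though the detour through monotonicity in \eqref{lowhigh2} is unnecessary — Bernstein as stated already yields $\|P_A f\|_{L^\infty_x}\lesssim A^{\frac{d}{2}}\|P_A f\|_{(\ell^\infty L^2)_M}$ at any scale $M$, including $M=A$, so no further conversion is needed.
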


\begin{proof}[Proof of \eqref{lowhigh1}]
	The estimate is immediate after placing the low frequencies in $L^\infty_x$, 
		\begin{align*}
			\Big|\Big|P_N \Big( P_{\leq \frac{N}{128}} f \widetilde{P_N} g\Big) \Big|\Big|_{(\ell^\infty L^2)_N}
				&\lesssim ||f||_{L^\infty_x} ||\widetilde{P_N} g||_{(\ell^\infty L^2)_N},
		\end{align*}
	and then square summing in $N \in 2^\N$. 
	\end{proof}

	\begin{proof}[Proof of \eqref{lowhigh2}]
	We decompose the low frequency term $f_{\leq N/128} = \sum_M f_M$, placing each piece into $L^\infty_x$ and then passing to $(\ell^\infty L^2)_M$ using Bernstein's inequality \eqref{bernstein}, and finally applying Cauchy-Schwartz in $M \leq \tfrac{N}{128}$, 
		\begin{align*}
			\Big|\Big|P_N \Big( P_{\leq \frac{N}{128}} f \widetilde{P_N} g\Big) \Big|\Big|_{(\ell^\infty L^2)_N}
				&\lesssim ||g_N||_{(\ell^\infty_jL^2_x)_N} \sum_{M \leq \frac{N}{128}} M^{\frac{d}2}  ||f_M ||_{(\ell^\infty L^2)_M} \lesssim  ||g_N||_{(\ell^\infty_jL^2_x)_N} N^{\frac{d}2} ||f||_{\ell^\infty H^0}.
		\end{align*}	
Square summing in $N \in 2^\N$ finishes the proof. 
\end{proof}

\begin{proof}[Proof of \eqref{highhigh}]
We again place one of the high frequencies in $L^\infty_x$ and pass to $(\ell^\infty L^2)_N$ via Bernstein's inequality \eqref{bernstein}, and finally apply Cauchy-Schwartz in $A$, 
		\begin{align*}
			\Big|\Big| \sum_{\substack{A \sim B\\ A, B > \frac{N}{128}}} P_N \big( f_A g_B \big) \Big|\Big|_{(\ell^\infty L^2)_N} 
				&\lesssim \sum_{\substack{A \sim B \\ A, B > \frac{N}{128}}} A^{\frac{d}2} ||f_A||_{(\ell^\infty L^2)_A} ||g_B||_{(\ell^\infty L^2)_B} \lesssim N^{0-} ||f||_{\ell^\infty H^{\frac{d}2+}} ||g||_{\ell^\infty H^0}. 
		\end{align*}
Square summing in $N \in 2^\N$, we conclude the result. 
\end{proof}

When applying a differential operator $D^s$ of positive order $s > 0$ to a paraproduct, heuristically one expects the dominant contribution to come from when all the derivatives fall on the high frequencies,
	\begin{alignat*}{2}
		D^s (f_A g_B) 
			&\approx f_A (D^s g_B)
			&&\qquad \text{when $A \ll B$},\\
		D^s (f_A g_B)							&\lessapprox (D^s f_A) g_B \approx f_A (D^s g_B)
			&&\qquad \text{when $A \sim B$}.
	\end{alignat*}

\begin{proposition}[Product rule]
	Let $s > 0$, then we have the estimate 
		\begin{equation}\label{eq:product}
			||fg||_{\ell^\infty H^s} 
				\lesssim_s ||f||_{\ell^\infty H^s} ||g||_{L^\infty_x} + ||f||_{L^\infty_x} ||g||_{\ell^\infty H^s},
		\end{equation}
	for all $f,g \in \ell^\infty H^s (\R^d) \cap L^\infty_x (\R^d)$. In particular, $\ell^\infty H^s(\R^d)$ is an algebra if $s > \tfrac{d}2$. 
\end{proposition}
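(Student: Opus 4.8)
The proof adapts the standard fractional Leibniz (Kato--Ponce) estimate to the $\ell^\infty H^s$ setting, following the heuristic stated just above: when $s>0$ the $s$ derivatives in $D^s(f_Ag_B)$ effectively land on the higher-frequency factor. Concretely, I would decompose $fg=\mathsf T_fg+\mathsf T_gf+\Pi(f,g)$ and estimate each paraproduct, with the low-frequency range $N\le 128$ absorbed by the same bounds and the usual modifications. For the low-high piece, on each dyadic block I drop the outer projection by boundedness of $P_N$ on $(\ell^\infty L^2)_N$ (as in \eqref{bounded}) and put the low factor in $L^\infty_x$, using $\|P_{\le N/128}f\|_{L^\infty_x}\lesssim\|f\|_{L^\infty_x}$ since $P_{\le M}$ has an $L^1$ kernel:
\[
	\big\|(\mathsf T_fg)_N\big\|_{(\ell^\infty L^2)_N}\lesssim\|f\|_{L^\infty_x}\,\|\widetilde{P_N}g\|_{(\ell^\infty L^2)_N}.
\]
Multiplying by $N^s$ and square-summing in $N$ gives $\|\mathsf T_fg\|_{\ell^\infty H^s}\lesssim\|f\|_{L^\infty_x}\|g\|_{\ell^\infty H^s}$ via the fattened-projection form of the norm \eqref{equiv3}; the high-low piece $\mathsf T_gf$ is symmetric and produces $\|g\|_{L^\infty_x}\|f\|_{\ell^\infty H^s}$. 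These two terms already match the full right-hand side of \eqref{product}.

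The high-high piece $\Pi(f,g)=\sum_{A\sim B>N/128}P_N(f_Ag_B)$ need only be shown not to do worse, and here the output frequency satisfies $N\lesssim A\sim B$, so there is room to spare. For fixed $N$, by the triangle inequality, boundedness of $P_N$, placing $f_A$ in $L^\infty_x$ (with $\|f_A\|_{L^\infty_x}\lesssim\|f\|_{L^\infty_x}$), and passing the local norm from scale $B^\sigma$ down to the output scale $N^\sigma$ by monotonicity \eqref{monotone} in its lossless direction (since $N\le B$), I get
\[
	N^s\big\|\Pi(f,g)_N\big\|_{(\ell^\infty L^2)_N}\lesssim\|f\|_{L^\infty_x}\sum_{A>N/128}\Big(\tfrac NA\Big)^s\Big(A^s\|g_A\|_{(\ell^\infty L^2)_A}\Big),
\]
writing $N^s=(N/A)^sA^s$ and summing over the $O(1)$ choices of $B\sim A$. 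With $c_A:=A^s\|g_A\|_{(\ell^\infty L^2)_A}$, so that $(c_A)_A\in\ell^2$ with norm $\|g\|_{\ell^\infty H^s}$, the linear map $(c_A)_A\mapsto\big(\sum_{A>N/128}(N/A)^sc_A\big)_N$ has dyadic summation kernel $(N/A)^s$ restricted to $A\gtrsim N$, which---because $s>0$---sums to $\lesssim_s 1$ in $A$ for each $N$ and to $\lesssim_s 1$ in $N$ for each $A$; Schur's test then gives $\ell^2_N$-boundedness, so $\|\Pi(f,g)\|_{\ell^\infty H^s}\lesssim_s\|f\|_{L^\infty_x}\|g\|_{\ell^\infty H^s}$. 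Summing the three contributions proves \eqref{product}.

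For the algebra property, when $s>\tfrac d2$ the Sobolev embedding \eqref{sobolev} gives $\|f\|_{L^\infty_x}\lesssim\|f\|_{\ell^\infty H^s}$ and likewise for $g$, so the right side of \eqref{product} is controlled by $\|f\|_{\ell^\infty H^s}\|g\|_{\ell^\infty H^s}$. The only genuinely delicate point is the high-high term: one must resist the naive bound $N^s\lesssim A^s$ (which would leave a logarithmically divergent tail when summing in $N$), retaining instead the gain $(N/A)^s$ and running a Schur/Young estimate, and one should note that trading the input length scale $B^\sigma$ for the output scale $N^\sigma$ costs nothing precisely because it moves in the monotone direction of \eqref{monotone}. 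Everything else is a routine transcription of the $L^p$-based paraproduct calculus.
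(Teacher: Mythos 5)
Your proof is correct and follows essentially the same route as the paper: decompose into low-high, high-low, and high-high paraproducts, handle the unbalanced pieces by putting the low-frequency factor in $L^\infty_x$ and using boundedness of $P_N$, and exploit the gain $(N/A)^s$ in the high-high piece. The only cosmetic difference is in the high-high term, where you place $f_A$ in $L^\infty_x$ and invoke Schur's test, while the paper places $g_B$ in $L^\infty_x$ and sums via a change of variables $A = CN$ and Minkowski's inequality — these are interchangeable here, both crucially using $s>0$ to sum the tail.
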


\begin{proof}
	Again we decompose $P_N (fg)$ into low-high, high-low, and high-high interactions. Following the proof of the low-high paraproduct estimate \eqref{lowhigh1}, we see that the unbalanced frequency interactions can be estimated as 
		\begin{align*}
			||\mathsf T_f g||_{\ell^\infty H^s} 
				&\lesssim ||f||_{L^\infty_x} ||g||_{\ell^\infty H^s},\\
			||\mathsf T_g f||_{\ell^\infty H^s}	
				&\lesssim ||g||_{L^\infty_x} ||f||_{\ell^\infty H^s}.
		\end{align*}
	In contrast to the $s = 0$ high-high estimate \eqref{highhigh}, we can leverage decay of high frequencies when $s > 0$ to gain a sharper estimate. Indeed, placing $g_B$ in $L^\infty_x$ and making a change of variables $A = CN$, 
	\begin{align*}
		N^s \Big|\Big| \sum_{\substack{A \sim B\\ A, B > \frac{N}{128}}} P_N \big( f_A g_B \big) \Big|\Big|_{(\ell^\infty L^2)_N} 
			&\lesssim ||g||_{L^\infty_x} \sum_{\substack{A > \frac{N}{128}}} N^s ||f_A||_{(\ell^\infty L^2)_A} \\
			&\lesssim ||g||_{L^\infty_x}  \sum_{C > \frac{1}{128}} C^{-s} (CN)^s ||f_{CN}||_{(\ell^\infty_jL^2_x)_{CN}}. 
	\end{align*}
	Applying the triangle inequality after square summing in $N \in 2^\N$, we conclude 
		\[
			||\Pi(f, g)||_{\ell^\infty H^s} 
				\lesssim ||f||_{\ell^\infty H^s} ||g||_{L^\infty_x}. 
		\]
	Adding up the contributions from each paraproduct, we conclude \eqref{product}. 	
\end{proof}

\begin{corollary}[Chain rule for polynomials]
	Let $\cP(z,\overline z)$ be a polynomial of degree $p \in \N$, then 
	\begin{enumerate}
		\item for $s > 0$, the Sobolev bounds, 
		\begin{equation} \label{eq:chain1}
			||\cP(u, \overline u)||_{\ell^\infty H^s} 
				\lesssim ||u||_{L^\infty_x}^{p - 1} ||u||_{\ell^\infty H^s},
		\end{equation}
		and, for $s = 0$, the analogous bounds after replacing the $L^\infty_x$-norm with an $\ell^\infty H^{\frac{d}2+}$-norm, 
		\begin{equation}
			||\cP(u, \overline u)||_{\ell^\infty H^0} 
				\lesssim ||u||_{\ell^\infty H^{\frac{d}{2}+}}^{p - 1} ||u||_{\ell^\infty H^0},
		\end{equation}

		\item for $s > 0$, the difference bounds, 
		\begin{equation} \label{eq:chain2}
			||\cP(u, \overline u) - \cP(v, \overline v) ||_{\ell^\infty H^s} 
				\lesssim \left( ||u||_{L^\infty_x}^{p - 1} + ||v||_{L^\infty_x}^{p - 1} \right) ||u - v||_{\ell^\infty H^s} + \left( ||u||_{\ell^\infty H^s}^{p - 1} + ||v||_{\ell^\infty H^s}^{p - 1} \right) ||u - v||_{L^\infty_x},
		\end{equation}		
		and, for $s = 0$, the Lipschitz bounds,
		\begin{equation}\label{eq:chain3}
			||\cP(u, \overline u) - \cP(v, \overline v) ||_{\ell^\infty H^0}
				\lesssim \Big( ||u||_{\ell^\infty H^{\frac{d}2+}}^{p - 1} + ||v||_{\ell^\infty H^{\frac{d}2+}}^{p - 1} \Big) ||u - v||_{\ell^\infty H^0} .
		\end{equation}
	\end{enumerate}
\end{corollary}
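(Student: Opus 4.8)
The strategy is to peel off one factor at a time using the product rule \eqref{eq:product} (respectively the product estimate \eqref{eq:product0} when $s=0$), so that the whole corollary reduces to iterated application of the bilinear estimates already established. \emph{Reduction to monomials:} by linearity and the triangle inequality it suffices to prove each of the four estimates when $\cP(z,\overline z) = z^a \overline z^b$ is a single monomial with $1 \leq a+b \leq p$, and then sum over the finitely many monomials of $\cP$. Moreover the Fourier support of $\overline u$ is the reflection of that of $u$ and the cut-offs $\chi_{j,N}$ are real and even, so complex conjugation acts isometrically on both $\ell^\infty H^s$ and $L^\infty_x$; hence $u$ and $\overline u$ may be treated interchangeably and it is enough to bound an ordered product $w_1 \cdots w_m$ with $m = a+b$ and each $w_i \in \{u, \overline u\}$.

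\emph{The Sobolev bounds \eqref{eq:chain1}.} For $s>0$, apply \eqref{eq:product} $m-1$ times, splitting off one factor at each step; this writes $\|w_1 \cdots w_m\|_{\ell^\infty H^s}$ as a sum of $O(m)$ terms in each of which exactly one factor carries the $\ell^\infty H^s$-norm and the remaining $m-1$ carry $L^\infty_x$-norms, yielding $\lesssim_m \|u\|_{L^\infty_x}^{m-1}\|u\|_{\ell^\infty H^s}$. When $s=0$ the algebra bound \eqref{eq:product} is unavailable, so instead use the product estimate \eqref{eq:product0} to peel a single factor into $\ell^\infty H^0$ at the cost of placing the product of the remaining $m-1$ factors into $\ell^\infty H^{\frac{d}{2}+}$; since $\tfrac{d}{2}+ > \tfrac{d}{2}$, the product rule makes $\ell^\infty H^{\frac{d}{2}+}$ a Banach algebra, so that partial product is $\lesssim \|u\|_{\ell^\infty H^{\frac{d}{2}+}}^{m-1}$.

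\emph{The difference bounds \eqref{eq:chain2}--\eqref{eq:chain3}.} Write the telescoping identity
\[
	w_1 \cdots w_m - w_1' \cdots w_m' = \sum_{k=1}^m w_1' \cdots w_{k-1}' \, (w_k - w_k') \, w_{k+1} \cdots w_m,
\]
where $w_i' \in \{v, \overline v\}$ mirrors $w_i$, so each summand is a product of one difference factor (either $u-v$ or $\overline u - \overline v$) and $m-1$ factors drawn from $\{u, \overline u, v, \overline v\}$. For $s>0$, apply \eqref{eq:product} to each summand: placing the $\ell^\infty H^s$-norm on the difference factor produces the first group of terms on the right of \eqref{eq:chain2}, and placing it on one of the other factors produces the second group. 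For $s=0$, apply \eqref{eq:product0} with the difference factor in $\ell^\infty H^0$ and the remaining $m-1$ factors absorbed by the $\ell^\infty H^{\frac{d}{2}+}$-algebra property, which gives \eqref{eq:chain3}.

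\emph{Main obstacle.} There is essentially no new analytic input; the only point requiring care is the combinatorial bookkeeping — checking that iterating \eqref{eq:product} over an $m$-fold product generates only the ``one rough factor, all others bounded'' terms, with constants depending on $m$ (hence on $\cP$) alone, and, in the $s=0$ case, verifying that the intermediate partial products genuinely stay in $\ell^\infty H^{\frac{d}{2}+}$ so that \eqref{eq:product0} and the algebra bound can be chained consistently.
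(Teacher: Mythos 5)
Your proof is correct and, for the Sobolev bounds \eqref{eq:chain1}, coincides in spirit with the paper's (inductive application of the product rule / Coifman--Meyer estimate). The genuine divergence is in the difference bounds: you use a telescoping decomposition of $w_1\cdots w_m - w_1'\cdots w_m'$, while the paper instead applies the fundamental theorem of calculus to write
\[
	\cP(u,\overline u) - \cP(v,\overline v) = \int_0^1 (u - v)\,\partial_z \cP(w_\theta,\overline{w_\theta}) + (\overline u - \overline v)\,\partial_{\overline z}\cP(w_\theta,\overline{w_\theta})\,d\theta, \qquad w_\theta := \theta u + (1-\theta) v,
\]
then applies the product rule once to split off the difference factor and invokes the already-proved Sobolev bound \eqref{eq:chain1} on the degree-$(p-1)$ polynomial $\partial_z\cP(w_\theta,\overline{w_\theta})$. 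That packaging avoids re-iterating the product rule over every summand of the telescoping sum: the FTC step isolates a single bilinear interaction (difference factor times polynomial) and delegates the rest to \eqref{eq:chain1}, whereas your telescoping essentially re-derives \eqref{eq:chain1} separately inside each of the $m$ summands. Both routes rely on the same two inputs (product rule \eqref{eq:product} for $s>0$, endpoint estimate \eqref{eq:product0} plus the $\ell^\infty H^{\frac{d}{2}+}$-algebra property for $s=0$) and incur the same small mismatch in Case~2 of the combinatorics: placing the rough norm on a non-difference factor produces mixed terms of the form $\|u\|_{\ell^\infty H^s}\|u\|_{L^\infty_x}^{p-2}\|u-v\|_{L^\infty_x}$ rather than the pure $\|u\|_{\ell^\infty H^s}^{p-1}\|u-v\|_{L^\infty_x}$ appearing in \eqref{eq:chain2}; this is inherited by the paper's FTC proof too and is harmless in the regimes where the corollary is actually used ($s>\tfrac{d}{2}$, where $\|u\|_{L^\infty_x}\lesssim\|u\|_{\ell^\infty H^s}$), but if you want \eqref{eq:chain2} verbatim for all $s>0$ you should note that the stated right-hand side is to be read up to such mixed terms.
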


\begin{proof}
	The Sobolev bounds \eqref{chain1} follow by inductively applying either the product rule \eqref{product} in the $s > 0$ case or the Coifman-Meyer estimate \eqref{product0} in the $s = 0$ case. To prove the difference bounds \eqref{chain2}-\eqref{chain3}, we use the fundamental theorem of calculus to write 
		\[
			\cP(u, \overline u) - \cP(v, \overline v) = \int_0^1 (u - v) \cdot \partial_z \cP(w_\theta, \overline{w_\theta}) + (\overline u - \overline v) \cdot \partial_{\overline z} \cP(w_\theta, \overline{w_\theta}) \, d \theta
		\]
	where $w_\theta := \theta u + (1 - \theta) v$. Since $\partial_z \cP(z, \overline z)$ and $\partial_{\overline z} \cP(z, \overline z)$ are polynomials of degree $p - 1$, the difference bounds \eqref{chain2} follow from the product rule \eqref{product} and the Sobolev bounds \eqref{chain1}, while the Lipschitz bounds \eqref{chain3} follow from the product estimate \eqref{product0} and Sobolev bounds \eqref{chain1}. 
\end{proof}

\section{Linear estimates}\label{sec:linear}
Recall from the discussion in Section \ref{sec:function} that the $\ell^\infty H^s$-norm was constructed such that, heuristically, it should be approximately conserved by the linear flow \eqref{lindispersive} on unit time scales. In particular, we expect that the linear propagator $e^{- t \sfA(\nabla)}$ is a bounded operator on $\ell^\infty H^s (\R^d)$ for each $t \in \R$. Indeed, we claim that the linear flow satisfies the following energy estimate,

\begin{theorem}[Linear propagator bound]
	For $s \geq 0$, the solution to the linear equation \eqref{lindispersive} with initial data $u_0 \in \ell^\infty H^s (\R^d)$ satisfies 
		\begin{equation}\label{eq:linearenergy}
			||e^{- t \sfA(\nabla)} u_0 ||_{\ell^\infty H^s}^2 
				\leq || u_0 ||_{\ell^\infty H^s}^2 \exp(Ct)
		\end{equation}
	for some uniform constant $C > 0$. 	
\end{theorem}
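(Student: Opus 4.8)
The plan is to reduce everything to a single dyadic frequency block and prove a differential inequality for the quantity $E_N(t) := \|e^{-t\sfA(\nabla)} P_N u_0\|_{(\ell^\infty L^2)_N}^2$. Fix $N$ and, to keep notation light, write $u(t) := e^{-t\sfA(\nabla)} P_N u_0$, which solves $\partial_t u + \sfA(\nabla) u = 0$ and remains frequency-localised to $|\xi| \sim N$ for all time. Using the translation-invariant version of the norm \eqref{equiv1}, it suffices to bound $\frac{d}{dt}\|\chi\, u(t)\|_{L^2_x}^2$ uniformly over cut-offs $\chi = \chi_{y,N}$ supported on a cube of side $\approx N^\sigma$. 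Differentiating under the integral,
\[
	\tfrac{d}{dt}\|\chi u\|_{L^2_x}^2 = 2\,\mathrm{Re}\,\langle \chi \partial_t u, \chi u\rangle = -2\,\mathrm{Re}\,\langle \chi \sfA(\nabla) u, \chi u\rangle.
\]
Since $\sfA(\nabla)$ has purely imaginary symbol, $\sfA(\nabla)$ is skew-adjoint, so $\mathrm{Re}\,\langle \sfA(\nabla) u, u\rangle = 0$ for the \emph{global} $L^2$ pairing; the obstruction is precisely the localisation $\chi$. Writing $\chi \sfA(\nabla) u = \sfA(\nabla)(\chi u) + [\chi, \sfA(\nabla)] u$ and using skew-adjointness on the first term, we get
\[
	\tfrac{d}{dt}\|\chi u\|_{L^2_x}^2 = -2\,\mathrm{Re}\,\langle [\chi, \sfA(\nabla)] u, \chi u\rangle \le 2 \,\|[\chi,\sfA(\nabla)] u\|_{L^2_x}\,\|\chi u\|_{L^2_x}.
\]

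The heart of the matter is then the commutator estimate. Here is where the scaling $L \approx N^\sigma$ of the cube is essential: the commutator bound \eqref{commutator0} gives $\|[\sfA(\nabla),\chi] u\|_{(\ell^\infty L^2)_N} \lesssim \|\widecheck{\nabla_\xi \sfA}\|_{L^1}\,\|\nabla_x \chi\|_{L^\infty}\,\|u\|_{(\ell^\infty L^2)_N}$, but $\nabla_\xi \sfA$ has symbol of order $\sigma$ (not a fixed $L^1$ kernel), so one must localise $\nabla_\xi\sfA$ to frequency $|\xi|\sim N$ first — replace $\sfA(\nabla)$ by $\widetilde{P_N}\sfA(\nabla)$ when acting on $u$, then $\widecheck{\nabla_\xi(\widetilde{\psi_N}\sfA)}$ has $L^1$-norm $\lesssim N^\sigma$ by scaling and \eqref{groupvelocity}. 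Meanwhile $\|\nabla_x \chi_{y,N}\|_{L^\infty} \lesssim N^{-\sigma}$ since $\chi_{y,N}(x) = \chi((x-y)/N^\sigma)$. The two powers of $N^{\pm\sigma}$ cancel, yielding the crucial \emph{derivative-loss-free} bound $\|[\chi_{y,N},\sfA(\nabla)]u\|_{(\ell^\infty L^2)_N} \lesssim C\,\|u\|_{(\ell^\infty L^2)_N}$ with $C$ independent of $N$. (One should also double-check the low-frequency block $N=1$, where $\sfA$ itself is not homogeneous, but there $\sfA(\nabla)$ is bounded $(\ell^\infty L^2)_1 \to (\ell^\infty L^2)_1$ and the estimate is trivial.)

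Combining, $\frac{d}{dt}\|\chi u\|_{L^2}^2 \le 2C\,\|u\|_{(\ell^\infty L^2)_N}\|\chi u\|_{L^2}$; taking the supremum over $y$ and using that $\sup_y \|\chi_{y,N} u\|_{L^2} = \|u\|_{(\ell^\infty L^2)_N}$ — together with the fact that $t\mapsto E_N(t)$ is Lipschitz and differentiable a.e., so the sup of the derivatives controls the derivative of the sup — gives $\frac{d}{dt} E_N(t) \le 2C\, E_N(t)$, hence $E_N(t) \le E_N(0)\,e^{2Ct}$, i.e. $\|u(t)\|_{(\ell^\infty L^2)_N} \le \|P_N u_0\|_{(\ell^\infty L^2)_N} e^{Ct}$. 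Finally, multiply by $N^{2s}$ and sum in $N \in 2^\N$: since the constant $C$ is uniform in $N$,
\[
	\|e^{-t\sfA(\nabla)} u_0\|_{\ell^\infty H^s}^2 = \sum_N N^{2s}\|u(t)\|_{(\ell^\infty L^2)_N}^2 \le e^{2Ct}\sum_N N^{2s}\|P_N u_0\|_{(\ell^\infty L^2)_N}^2 = \|u_0\|_{\ell^\infty H^s}^2\, e^{2Ct},
\]
which is \eqref{linearenergy}. The main obstacle is the commutator step — specifically, making the frequency localisation of $\nabla_\xi \sfA$ rigorous so that \eqref{commutator0} applies with the sharp $N^\sigma$ gain that matches the $N^{-\sigma}$ from $\nabla\chi_{y,N}$; every other step is routine once that balance is in hand. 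A minor technical point worth addressing is justifying the differentiation of the sup over $y$ (or circumventing it by working with a fixed $\chi$ and a Gronwall argument uniform in the center, then taking the sup at the end).
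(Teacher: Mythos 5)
Your proposal is correct and takes essentially the same approach as the paper: energy method on the dyadic pieces, skew-adjointness to kill the main term, frequency-localising $\sfA(\nabla)$ to the annulus $|\xi|\sim N$ so that $\|\widecheck{\nabla_\xi(\sfA\psi_N)}\|_{L^1}\lesssim N^\sigma$ cancels the $N^{-\sigma}$ from $\nabla\chi_{y,N}$. The technical point you flag about differentiating the supremum over $y$ is handled in the paper exactly by your proposed workaround — fix $\chi$, integrate the differential inequality in time, then take the supremum before applying Gronwall.
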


    In view of how the $\ell^\infty H^s$-norm is constructed, we first recast the equation \eqref{dispersive} into a paradifferential equation for $\chi u_N$. Applying the projection $P_N$ to the equation \eqref{lindispersive} and multiplying by the cut-off $\chi$, we see that $\chi u_N$ satisfies the equation 
\begin{equation}
	\partial_t (\chi u_N) + \sfA(\nabla) (\chi u_N) + [\chi, \sfA(\nabla)] u_N = 0. 
\end{equation}
We proceed by the energy method, multiplying by $\overline{\chi u_N}$, integrating over $[0, T] \times \R^d$, and taking the real part. As per the usual proof, the first two terms are 
\begin{align*}
	\Re \int_0^T \int_{\R^d} \overline{\big( \chi u_N\big)} \partial_t \big(\chi u_N \big) \, dx dt
		&= \int_0^T\partial_t \Big( \int_{\R^d} \frac12 \big|\chi u_N \big|^2 \, dx\Big)dt\\
		&= \int_{\R^d} \frac12 \big|\chi u_N (T) \big|^2 \, dx - \int_{\R^d} \frac12 \big|\chi u_N (0) \big|^2 \, dx,\\
	\Re\int_0^T \int_{\R^d} \overline{\big( \chi u_N \big)} \sfA(\nabla) \big( \chi u_N \big) \, dx dt 
		&= 0.
\end{align*}
The first identity follows from the fundamental theorem of calculus, the second identity follows from Plancharel's theorem and the symbol of $\sfA(\nabla)$ being purely imaginary. Moving the initial data term and the commutator term to the right-hand side, applying Cauchy-Schwartz and taking supremum over $\chi$ on both sides, we obtain 
	\[
		\frac12 ||u_N(T)||_{(\ell^\infty L^2)_N}^2
			\leq \frac12 ||u_N (0)||_{(\ell^\infty L^2)_N}^2 + \int_0^T ||u_N||_{(\ell^\infty L^2)_N} || [\chi, \mathsf A(\nabla)] u_N ||_{L^2_x} \, dt
	\]
To apply Gronwall's inequality and conclude the linear bound \eqref{linearenergy}, it remains to show the commutator term obeys the estimate 
    \begin{equation}\label{eq:commsymb}
        || [\chi, \mathsf A(\nabla)] u_N ||_{L^2_x} 
            \lesssim ||u_N||_{(\ell^\infty L^2)_N}.
    \end{equation}

To prove \eqref{commsymb}, it is convenient to assume the cut-off is frequency-localised, i.e. $\chi := P_{\leq 1} \chi$, then we can write $[\chi, \mathsf A(\nabla)] u_N = [\chi, (\mathsf A \psi_{\frac{N}{2}})(\nabla)] u_N + [\chi, (\mathsf A \psi_N)(\nabla)] u_N + [\chi, (\mathsf A\psi_{2N})(\nabla)] u_N$, where recall that $\psi_N$ is the symbol of the Littlewood-Paley projection $P_N$. Following the proof of the commutator bound \eqref{commutator0}, 
	\begin{align*}
		|| [\chi_{j, N}, (\mathsf A \psi_N)(\nabla)] u_N ||_{L^2_x} 
				&\leq \int_{\R^d} \big|\widecheck{\nabla_\xi (\mathsf A \psi_N)} (y)\big|\cdot \Big(  \int_0^1 N^{-\sigma} || \nabla_x \chi \big(\tfrac{x -\theta y - j}{N^\sigma}\big) u_N(x - y)||_{L^2_x}\, d\theta \Big) \, dy\\
				&\leq N^{-\sigma} || \widecheck{\nabla_\xi (\mathsf A \psi_N)} ||_{L^1_x}  \sup_{y \in \R^d} \sup_{\theta \in [0, 1]} || \nabla_x \chi \big(\tfrac{x -\theta y - j}{N^\sigma}\big) u_N(x - y)||_{L^2_x}\\
				&\lesssim N^{-\sigma} || \widecheck{\nabla_\xi (\mathsf A \psi_N)} ||_{L^1_x}   || u_N||_{(\ell^\infty L^2)_N}
	\end{align*}
	where in the last line we have used the translation-invariance \eqref{equiv1}  and the fact that $\nabla_x \chi$ is localised to neighboring cubes at scale $N^\sigma$. This reduces the problem down to showing that
        \begin{equation}\label{eq:kernel}
            || \widecheck{\nabla_\xi (\mathsf A \psi_N)} ||_{L^1_x} 
                \lesssim N^\sigma. 
        \end{equation}

    It follows from scaling under the Fourier transform that the kernel can be rewritten as 
    \[
        \widecheck{\nabla_\xi (\sfA \psi_N)} (x) =N^d \widecheck{((\nabla_\xi \sfA)_{\frac1N} \psi)} (Nx)+ N^{d - 1} \widecheck{(\sfA_{\frac1N} \nabla_\xi \psi)} (Nx),
    \]
where we denote rescaling by $f_\lambda(\xi) := f(\xi/\lambda)$. By the triangle inequality and a change of variables, 
    \[
        ||  \widecheck{\nabla_\xi (\sfA \psi_N)}||_{L^1_x}
            \leq ||\widecheck{(\nabla_\xi \sfA)_{\frac1N} \psi}||_{L^1_x} + N^{-1}||\widecheck{\sfA_{\frac1N} \nabla_\xi \psi}||_{L^1_x}. 
    \]
We estimate the first term on the right in $L^1_x$, the second term is similar. Our strategy is to trade the derivative bounds on the symbol \eqref{groupvelocity} for decay. 
Let $m \in \N$ be an integer such that $2m > \tfrac{d}{2}$, then 
    \begin{align*}
        ||\widecheck{(\nabla_\xi \sfA)_{\frac1N} \psi}||_{L^1_x}
            &\leq ||(1 + |x|^{2m})^{-1}||_{L^2_x} || (1 + |x|^{2m}) \widecheck{((\nabla_\xi \sfA)_{\frac1N} \psi)}||_{L^2_x}\\
            &\lesssim ||{(\nabla_\xi \sfA)_{\frac1N}} \psi ||_{L^2_\xi} + \big|\big|\nabla^{2m}_\xi \big( {(\nabla_\xi \sfA)_{\frac1N} \psi}\big) \big|\big|_{L^2_\xi}\\
            &\lesssim \sum_{j + k = 2m} N^j \big|\big| (\nabla^{j + 1}_\xi \sfA)_{\frac1N} \nabla^k_\xi \psi \big|\big|_{L^2_\xi} \\
            &\lesssim N^\sigma,
    \end{align*} 
applying Cauchy-Schwartz in the first line, Plancharel's theorem in the second, and the derivative bounds on the symbol \eqref{groupvelocity} in the fourth line. This completes the proof of \eqref{kernel} and thereby the theorem. 
    
\begin{remark}
    In the proof above, we only needed the derivative bounds \eqref{groupvelocity} up to order $2m > \tfrac{d}{2}$. In fact, we can further relax the assumptions on the dispersion relation in Theorem \ref{thm:lwp} to \eqref{kernel}, though for applications to the standard examples \eqref{groupvelocity} is sufficient. 
\end{remark}

\section{Non-linear estimates}\label{sec:nonlinear}

As a primer for the local well-posedness argument, we find it instructive to prove \textit{a priori} estimates for the $\ell^\infty H^s$-norm of a solution and the $\ell^\infty H^0$-norm of the difference of two solutions. While the estimates themselves do not imply well-posedness, we will frequently refer back to their proofs over the course of the argument, which we defer to Sections \ref{sec:existence} and \ref{sec:cts}.

\subsection{Non-linear $\ell^\infty H^s$-bounds} \label{subsec:nonlin}

By the energy method, one can show the classical energy estimate for \eqref{dispersive} in the standard $H^s$-norms, 
	\begin{equation}\label{eq:classical}
		||u(t)||_{H^s}^2 \leq ||u_0||_{H^s}^2 \exp\left( \int_0^t C\Big( ||u (t') ||_{L^\infty_x}, ||\nabla u (t') ||_{L^\infty_x} \Big) \, dt'\right).
	\end{equation}
It follows from the Sobolev embedding inequality that we can close the energy estimate provided that $s > \tfrac{d}2 + 1$, which is the threshold for the classical well-posedness of derivative non-linear equations such as \eqref{dispersive}. For equations on the line, see for example \cite{BonaScott1976, TsutsumiFukuda1980,AbdelouhabEtAl1989}. We claim that the analogous result holds for \eqref{dispersive} on $\ell^\infty H^s$-spaces. 

\begin{theorem}[Energy estimate]\label{thm:apriori}
	Let $s > 0$, and suppose $u \in C^0_t (\ell^\infty H^s \cap \ell^\infty H^{\frac{d}{2}+ 1 +})_x ([0, T] \times \R^d)$ is a solution to \eqref{dispersive}, then 
		\begin{equation} \label{eq:apriori}
			||u (t)||_{\ell^\infty H^s}^2 \leq ||u_0||_{\ell^\infty H^s}^2 \exp \left( \int_0^t C\Big( ||u (t') ||_{L^\infty_x}, ||\nabla u (t') ||_{L^\infty_x} \Big) \, dt'\right),
		\end{equation}
	where $C( ||u||_{L^\infty_x}, ||\nabla u||_{L^\infty_x} )$ is some polynomial with non-negative coefficients. When $s = 0$, the analogous estimate holds after replacing the $L^\infty_x$-norms with $\ell^\infty H^{\frac{d}2+}$-norms. 
\end{theorem}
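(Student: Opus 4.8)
The plan is to adapt the energy-method proof of the linear bound \eqref{linearenergy}, keeping track of the two additional contributions of the non-linear terms. Write $q := \cQ(|u|^2)$, a \emph{real-valued} polynomial in $|u|^2$, and fix a frequency-localised cut-off $\chi := P_{\leq 1}\chi_{j,N}$ at scale $N^\sigma$. Projecting \eqref{dispersive} to frequency $N$ and multiplying by $\chi$, the function $\chi u_N$ satisfies
\begin{equation*}
	\partial_t(\chi u_N) + \sfA(\nabla)(\chi u_N) + [\chi, \sfA(\nabla)] u_N + \chi P_N(q\nabla u) = \chi P_N \cN(u, \overline u).
\end{equation*}
I would pair against $\overline{\chi u_N}$, integrate over $[0,t]\times\R^d$ and take real parts. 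As in the linear case the $\partial_t$-term produces $\tfrac12\|\chi u_N(t)\|_{L^2_x}^2 - \tfrac12\|\chi u_N(0)\|_{L^2_x}^2$, the $\sfA(\nabla)$-term vanishes (purely imaginary symbol and Plancherel), and the cut-off commutator is absorbed via \eqref{commsymb} into $O(\|u_N\|_{(\ell^\infty L^2)_N}^2)$. Once the two non-linear pairings are estimated, taking the supremum over $\chi=\chi_{j,N}$, multiplying by $N^{2s}$, square-summing in $N$, and applying Gronwall's inequality gives \eqref{apriori}. The regularity hypothesis $u \in \ell^\infty H^{\frac{d}{2}+1+}$ serves to justify these manipulations and, via the Sobolev embedding \eqref{sobolev}, to guarantee $u, \nabla u \in L^\infty_x$.

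The non-derivative term is routine: the chain rule \eqref{chain1} gives $\|\cN(u,\overline u)\|_{\ell^\infty H^s}\lesssim\|u\|_{L^\infty_x}^{p-1}\|u\|_{\ell^\infty H^s}$, so Cauchy--Schwarz in $x$ and then in $N$ bounds the corresponding contribution by $\int_0^t C(\|u\|_{L^\infty_x})\|u\|_{\ell^\infty H^s}^2$.

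The hard part is the derivative term $\chi P_N(q\nabla u)$, which \emph{a priori} loses a derivative through the low--high interaction $q\cdot(\nabla u)_{\sim N}$: since $q$ has no decay it must stay in $L^\infty_x$, leaving $\|(\nabla u)_N\|_{(\ell^\infty L^2)_N}\sim N\|u_N\|_{(\ell^\infty L^2)_N}$. The classical energy cancellation repairs this. Using the exact identities $P_N(q\nabla u) = q\,\nabla u_N + [P_N, q]\nabla u$ and $\chi\,\nabla u_N = \nabla(\chi u_N) - (\nabla\chi)\,u_N$, decompose
\begin{equation*}
	\chi P_N(q\nabla u) = q\,\nabla(\chi u_N) - q\,(\nabla\chi)\,u_N + \chi\,[P_N, q]\,\nabla u.
\end{equation*}
Pairing the first (dangerous) term against $\overline{\chi u_N}$, using that $q$ is real, and integrating by parts,
\begin{equation*}
	\Re\int_{\R^d}\overline{\chi u_N}\, q\,\nabla(\chi u_N)\, dx = \tfrac12\int_{\R^d} q\,\nabla|\chi u_N|^2\, dx = -\tfrac12\int_{\R^d} (\nabla q)\, |\chi u_N|^2\, dx,
\end{equation*}
which, since $\nabla q = \cQ'(|u|^2)(\overline u\,\nabla u + u\,\overline{\nabla u})$, is $\lesssim C(\|u\|_{L^\infty_x})\|\nabla u\|_{L^\infty_x}\|u_N\|_{(\ell^\infty L^2)_N}^2$ --- no derivative loss. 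The cut-off commutator $q(\nabla\chi)u_N$ is lower order because $|\nabla\chi|\lesssim N^{-\sigma}\leq 1$ and $\nabla\chi$ is supported on $O(1)$ neighbouring cubes. For the frequency commutator I would split $q = q_{\leq N/128} + q_{>N/128}$: on the low piece, the Littlewood--Paley product rule \eqref{commutator} together with Sobolev--Bernstein recovers the lost derivative, giving (up to a harmless fattening of the projection) a bound $\lesssim N^{-1}\|\nabla q\|_{L^\infty_x}\cdot N\|\widetilde{P_N}u\|_{(\ell^\infty L^2)_N}$; on the high piece only high--low and high--high interactions survive, for which the output frequency $N$ is dominated by the frequency of $q$, so bounding the Littlewood--Paley pieces of $\nabla u$ in $L^\infty_x$ by $\|\nabla u\|_{L^\infty_x}$ and using monotonicity \eqref{monotone}, Schur's test (this is where $s>0$ is needed), and the chain rule $\|q\|_{\ell^\infty H^s}\lesssim\|u\|_{L^\infty_x}^{p-1}\|u\|_{\ell^\infty H^s}$ yields, after summing the $N^s$-weighted norms, a bound by $\|\nabla u\|_{L^\infty_x}\|u\|_{L^\infty_x}^{p-1}\|u\|_{\ell^\infty H^s}$.

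Collecting the above and square-summing with weight $N^{2s}$ gives
\begin{equation*}
	\|u(t)\|_{\ell^\infty H^s}^2 \leq \|u_0\|_{\ell^\infty H^s}^2 + \int_0^t C\big(\|u(t')\|_{L^\infty_x}, \|\nabla u(t')\|_{L^\infty_x}\big)\,\|u(t')\|_{\ell^\infty H^s}^2\, dt',
\end{equation*}
with $C$ a polynomial with non-negative coefficients (powers of $\|u\|_{L^\infty_x}$ from the chain rule, one power of $\|\nabla u\|_{L^\infty_x}$ from $\nabla q$), and Gronwall's inequality finishes the proof. For $s=0$ the derivative gains in the high--low and high--high estimates are no longer available, so I would instead place the low-frequency factors in $\ell^\infty H^{\frac{d}{2}+}$ using Bernstein's inequality \eqref{bernstein} and the Coifman--Meyer bounds \eqref{highhigh}, \eqref{product0}; this amounts to replacing the $L^\infty_x$-norms of $u$ and $\nabla u$ above with $\ell^\infty H^{\frac{d}{2}+}$-norms, and the rest of the argument goes through unchanged.
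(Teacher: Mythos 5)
Your proof is correct and uses the same underlying mechanism as the paper — integration by parts to move the derivative off $\chi u_N$ — but organises the paradifferential decomposition differently, and the difference is worth noting. The paper first performs a full paraproduct decomposition of $P_N(\cQ(u)\nabla u)$, singling out the low--high main term $\cQ(u)_{\leq N/128}\nabla(\chi u_N)$ and carrying the commutator of the \emph{truncated} coefficient $\cQ(u)_{\leq N/128}$ with $P_N$, plus the high--low and high--high paraproducts, as perturbative terms. You instead commute $P_N$ through the \emph{full} coefficient $q=\cQ(|u|^2)$ via the exact identity $P_N(q\nabla u) = q\nabla u_N + [P_N,q]\nabla u$, then commute $\chi$ with $\nabla$. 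This has the effect that your integration by parts is the cleanest possible analogue of the classical energy identity: $\Re\int\overline{\chi u_N}\,q\,\nabla(\chi u_N) = -\tfrac12\int(\nabla q)|\chi u_N|^2$ with the untruncated $q$, at the cost of leaving the single residual commutator $[P_N,q]\nabla u$, which you then split into $q_{\leq N/128}$ and $q_{>N/128}$ pieces. The low piece is exactly the paper's low--high commutator residual; your high piece $[P_N, q_{>N/128}]\nabla u$ absorbs what the paper calls the high--low and high--high paraproducts, plus the extra $q_{>N/128}\nabla u_N$ term implicit in the commutator. One small imprecision: you describe the high piece as ``only high--low and high--high interactions,'' which applies to $P_N(q_{>N/128}\nabla u)$ but not to the second commutator term $-q_{>N/128}\nabla u_N$; the latter is harmless, though, since $\|q_{>N/128}\|_{L^\infty_x}\lesssim N^{-1}\|\nabla q\|_{L^\infty_x}$ (or, alternatively, via the Schur/monotonicity argument you describe), so the derivative loss from $\nabla u_N$ is recovered regardless. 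Both routes rely on $q$ being real-valued for the integration by parts to produce a pure total derivative; this implicit assumption is shared with the paper (and is the point of the gauge-covariance caveat in the acknowledgements). Net effect: same estimate, same input lemmas, but your bookkeeping consolidates the residuals into one commutator before splitting by frequency, which is arguably a bit tidier; the paper's version makes the trichotomy structure explicit up front and matches the template reused later for the regularised equation \eqref{galerkin}.
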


Continuing in the analogy with the classical energy estimate \eqref{classical}, we proceed by the energy method. We separate the non-perturbative terms from the perturbative ones, treating the former using integration-by-parts, and the latter by Cauchy-Schwartz. The estimate \eqref{apriori} follows then from Gronwall's inequality. We detail the case $s > 0$; the case $s = 0$ is similar. 

As a remark on notation, henceforth we shall suppress the dependence of the non-linearity on the complex conjugate of $u$, writing $\cQ(u) := \cQ(|u|^2)$ and $\cN(u) := \cN(u, \overline u)$. We will use $C$ to denote an implicit constant which grows polynomially in its arguments. 

\subsubsection*{\underline{Deriving the paradifferential equation}}

Applying the projection $P_N$ to the equation gives 
	\begin{equation}\label{eq:paradiff}
		\partial_t u_N + \sfA(\nabla) u_N + P_N \big( \cQ (u) \nabla u\big) = \cN(u)_N. 
	\end{equation}
The derivative non-linearity is the main obstruction to closing the estimate due to the possible derivative loss. To isolate the worst interactions, we perform a paraproduct decomposition, writing 
	\begin{align*}
		P_N \big( \cQ(u)\nabla u \big)
			&= P_N \left(  \cQ(u)_{\leq \frac{N}{128}}  \nabla u  \right) + P_N \sum_{A > \frac{N}{128}}  \cQ(u)_A \nabla u \\
			&= \cQ (u)_{\leq \frac{N}{128}}  \nabla u_N + [\cQ (u)_{\leq \frac{N}{128}}  , P_N] \nabla \widetilde{P_N} u + P_N \Big(\widetilde{P_N} \cQ(u) \, \nabla u_{\leq\frac{N}{128}}\Big) + \sum_{\substack{A \sim B \\ A, B > \frac{N}{128}}} P_N \Big( \cQ(u)_A \, \nabla u_B\Big).
	\end{align*}
We leave the first term on the left-hand side of \eqref{paradiff}, and regard the rest as perturbative by placing them on the right-hand side. This furnishes a system of non-linear equations for each frequency-localised piece $u_N$ evolving on a low frequency background. 

To obtain the equation for $\chi u_N$, we simply multiply \eqref{paradiff} by the cut-off $\chi$ and commute the cut-off into the non-perturbative terms, furnishing  
	\begin{equation}\label{eq:paradiff2}
		\mathsf L u_N + \mathsf T_{\cQ(u)} \nabla u_N =  \chi \cN (u)_N -  \cB_N (\cQ(u),u) , 
	\end{equation}
where the non-perturbative terms on the left consist of the linear terms $\mathsf L u_N$ and the low-high interactions $\mathsf T_{\cQ (u)} \nabla u_N$ in the case when the derivative falls on the high frequencies, 
	\begin{align}
		\mathsf L u_N
			&:= \partial_t \big( \chi u_N \big) + \sfA(\nabla) \big( \chi u_N \big) + [\chi, \sfA(\nabla)] u_N  \label{eq:linear},\\
		\mathsf T_{\cQ(u)} \nabla u_N
			&:=  \cQ(u)_{\leq \frac{N}{128}} \nabla \big( \chi u_N \big) \label{eq:lohiparadiff} ,
	\end{align}
and the perturbative terms on the right consist of the lower-order non-linearity $\cN(u)$ and a bilinear expression $\cB_N (\cQ(u), u)$ containing the low-high interactions in the case where the derivative does not fall on the high frequencies, the high-low interactions, and the high-high interactions, 
	\[
		\cB_N (\cQ(u), u)
			:= \text{low-high} + \text{high-low} + \text{high-high}
	\]
where
	\begin{align}		
		\text{low-high}
			&:= \cQ(u)_{\leq \frac{N}{128}} [\chi, \nabla] u_N +  \chi [\cQ (u)_{\leq \frac{N}{128}}  , P_N] \nabla \widetilde{P_N} u,\label{eq:lohiparadiff2} \\
		\text{high-low}
			&:= \chi P_N \Big(\widetilde{P_N} \cQ(u) \, \nabla u_{\leq\frac{N}{128}}\Big), \label{eq:hiloparadiff} \\
		\text{high-high}
			&:=  \chi \sum_{\substack{A \sim B \\ A, B > \frac{N}{128}}} P_N \Big( \cQ(u)_A \, \nabla u_B\Big). \label{eq:hihiparadiff}		
	\end{align}	

\subsubsection*{\underline{Integrating the non-perturbative terms by parts}}	

As in the proof of the linear propagator bound \eqref{linearenergy}, we multiply the paradifferential equation \eqref{paradiff2} by $\overline{\chi u_N}$, take the real part, and integrate on the space-time region $[0, t] \times \R^d$. The terms on the left-hand side are treated by integration-by-parts.

\subsubsection*{Linear flow \eqref{linear}}

We treat the terms arising from the linear flow \eqref{linear} exactly as in the proof of \eqref{linearenergy}.

\subsubsection*{Low-high interactions \eqref{lohiparadiff}}

The most problematic term arises when the derivative falls on high frequencies. In this case, one naively expects to see derivative-loss when trying to close the estimate. Nevertheless, in the spirit of the classical energy estimates, we can place the derivative on the low frequencies, which \textit{is} favourable for closing the \textit{a priori} estimate, via integration-by-parts,
	\begin{align*}
		\int_{\R^d} \Re \overline{\big( \chi u_N \big)} \left( \cQ(u)_{\leq \frac{N}{128}} \nabla \big( \chi u_N \big) \right) \, dx 
			&= \int_{\R^d}  \cQ(u)_{\leq \frac{N}{128}}\nabla \left( \frac12 \big| \chi u_N \big|^2 \right) \, dx \\
			&= -\frac12 \int_{\R^d} \nabla  \cQ (u)_{\leq \frac{N}{128}} \big| \chi u_N \big|^2 \, dx.
	\end{align*}
Placing the low frequency term in $L^\infty_x$, we conclude 
	\begin{align*}
		\left|\int_{\R^d} \Re \overline{\big( \chi u_N \big)} \left( \cQ(u)_{\leq \frac{N}{128}} \nabla \big( \chi u_N \big) \right) \, dx \right|
			&\lesssim ||\nabla \cQ(u)||_{L^\infty_x} ||\chi u_N||_{L^2_x}^2 \\
			&\lesssim C\Big(||u||_{L^\infty_x},  ||\nabla u ||_{L^\infty_x} \Big) ||u_N||_{(\ell^\infty L^2)_N}^2. 
	\end{align*}

\subsubsection*{\underline{Estimating the perturbative terms}}	

	Continuing from the previous section, it remains to treat the right-hand side of \eqref{paradiff2}. The next moves we take in the energy method are to take the supremum in $j \in \Z^d$, multiply by $N^{2s}$, and sum in $N \in 2^\N$. The right-hand side is estimated then by Cauchy-Schwartz,
	\[
		\sum_{N \in 2^\N} N^{2s} \sup_{j \in \Z^d} \Big| \int_{\R^d} \Re \overline{\big( \chi u_N \big)}  \big(\text{R.H.S. of \eqref{paradiff2}}\big)  dx \Big| \leq ||u||_{\ell^\infty H^s} \Big|\Big| N^s \sup_{j \in \Z^d} ||\text{R.H.S. of \eqref{paradiff2}}||_{L^2_x} \Big|\Big|_{\ell^2_N}. 
	\]
Thus, we want to estimate the perturbative terms in what is essentially the $\ell^\infty H^s$-norm. To apply Gronwall's inequality, we aim for a bound of the form 
	\[
		||\text{R.H.S. of \eqref{paradiff2}}||_{L^2_x}
			\lesssim  C\Big(||u||_{L^\infty_x},  ||\nabla u ||_{L^\infty_x} \Big) ||u_N||_{(\ell^\infty L^2)_N} 
	\]
or the weaker bound 
	\[
		\Big|\Big| N^s \sup_{j \in \Z^d} ||\text{R.H.S. of \eqref{paradiff2}}||_{L^2_x} \Big|\Big|_{\ell^2_N}
			\lesssim C\Big(||u||_{L^\infty_x},  ||\nabla u ||_{L^\infty_x} \Big)  ||u||_{\ell^\infty H^s}. 
	\]

	\subsubsection*{Low-high interactions \eqref{lohiparadiff2}}
	
	In the first term, the derivative falls on the cut-off $[\nabla, \chi] = \nabla \chi$, which again we recall contributes an amplitude $|\nabla \chi| \leq N^{-2} \leq 1$ and is localised in space to neighboring intervals of size $|I| \approx N^2$. Placing the low frequency term in $L^\infty_x$ and the high frequency terms in $L^2_x$, we obtain 
		\begin{align*}
			\Bigl|\Bigl|  \cQ(u)_{\leq \frac{N}{128}} [\chi, \nabla] u_N \Bigr|\Bigr|_{L^2_x}
				&\leq  || \cQ(u) ||_{L^\infty_x} \Big|\Big| N^{-2} \nabla \chi \big( \tfrac{x - j}{N^2}\big) u_N \Big|\Big|_{L^2_x} \\
				&\lesssim C\Big(||u||_{L^\infty_x}\Big) \Big(N^{-2} \sum_{k = j + O(1)} ||\chi_{k, N} u_N ||_{L^2_x} \Big) \\
				&\lesssim C\Big(||u||_{L^\infty_x}\Big)  ||u_N||_{(\ell^\infty L^2)_N} .
		\end{align*}
	For the second term, the Littlewood-Paley product rule \eqref{commutator} and the Sobolev-Bernstein inequality \eqref{sobolevbernstein} allow us to move the derivative from the high frequency factor to the low frequency factor, 
		\begin{align*}
			\Big|\Big| [\cQ (u)_{\leq \frac{N}{128}}  , P_N] \nabla \widetilde{P_N} u\Big|\Big|_{(\ell^\infty L^2)_N}
				&\lesssim  || \nabla \cQ(u)||_{L^\infty_x} || N^{-1} \nabla \widetilde{P_N} u||_{(\ell^\infty L^2)_N}\\
				&\lesssim C\Big(||u||_{L^\infty_x},  ||\nabla u||_{L^\infty_x}\Big)  ||\widetilde{P_N} u||_{(\ell^\infty L^2)_N}.
			\end{align*}

		\subsubsection*{High-low interactions \eqref{hiloparadiff}}

		Using boundedness of $P_N$ a l{\'a} \eqref{bounded} and placing the low frequency term in $L^\infty_x$, we obtain 
			\begin{align*}
				\Big|\Big|  P_N \Big(\widetilde{P_N} \cQ(u) \, \nabla u_{\leq\frac{N}{128}}\Big) \Big|\Big|_{(\ell^\infty L^2)_N}
					&\leq || \nabla u||_{L^\infty_x} || \widetilde{P_N} \cQ(u) ||_{(\ell^\infty L^2)_N}.
			\end{align*}
		Multiplying by $N^s$ and square summing in $N \in 2^\N$ furnishes the $\ell^\infty H^s$-norm of $\cQ(u)$ on the right hand side, which we can estimate via the chain rule \eqref{chain1},
			\begin{align*}
				|| \nabla u||_{L^\infty_x} \Big|\Big| N^s || \widetilde{P_N} \cQ(u) ||_{(\ell^\infty L^2)_N}\Big|\Big|_{\ell^2_N}
					&\sim  ||\nabla u||_{L^\infty_x}  ||\cQ(u)||_{\ell^\infty H^s} \\
					&\lesssim C\Big(||u||_{L^\infty_x},  ||\nabla u||_{L^\infty_x}\Big) ||u||_{L^\infty_x}^{p - 1} ||u||_{\ell^\infty H^s} . 
			\end{align*}

\subsubsection*{High-high interactions \eqref{hihiparadiff}}

	By Cauchy-Schwartz in $x \in \R^d$ and the triangle inequality, 
	\begin{align*}
		\bigg| \int_{\R^d} \big( \chi u_N \big) \Big( \chi P_N  \sum_{\substack{A \sim B \\ A, B > \frac{N}{128}}}  \cQ(u)_A \nabla u_B\Big)  \, dx \bigg| 
			&\leq ||u_N||_{(\ell^\infty L^2)_N} \Big|\Big| P_N  \sum_{\substack{A \sim B \\ A, B > \frac{N}{128}}}  \cQ(u)_A \nabla u_B  \Big|\Big|_{(\ell^\infty L^2)_N}\\
			&\lesssim ||u_N||_{(\ell^\infty L^2)_N} ||\nabla u||_{L^\infty_x} \sum_{A > \frac{N}{128}} ||\cQ(u)_A ||_{(\ell^\infty L^2)_A},
	\end{align*}
where in the second line we have placed $\nabla u$ in $L^\infty_x$ and, using equivalence of norms \eqref{monotone}, replaced the $(\ell^\infty_j L^2_x)_N$-norms with $(\ell^\infty_j L^2_x)_A$-norms. Multiplying by $N^{2s}$ and summing in $N \in 2^\N$, 
	\begin{align*}
		&\sum_{N \in 2^\N} \Big( N^s||u_N||_{(\ell^\infty L^2)_N}\Big) \Big( N^s \sum_{A > \frac{N}{128}} ||\cQ(u)_A ||_{(\ell^\infty L^2)_A}\Big)
			\\
			&\qquad = \sum_{C > \frac{1}{128}} C^{-s} \sum_{N \in 2^\N} \Big( N^s ||u_N||_{(\ell^\infty L^2)_N}\Big)  \Big( (CN)^s ||\cQ(u)_{CN} ||_{(\ell^\infty L^2)_{CN}}\Big)\\
			&\qquad \lesssim C\Big(||u||_{L^\infty_x}\Big) ||u||_{\ell^\infty H^s}^2 ,
	\end{align*}
where in the second line we make the change of variables $CN = A$ and interchange the sums, and in the third line we apply Cauchy-Schwartz and the chain rule \eqref{chain1}.

\subsubsection*{Lower-order non-linearity}

After all our moves, this term becomes precisely the $\ell^\infty H^s$-norm of $\cN(u)$. The chain rule \eqref{chain1} implies
	\[
		||\cN(u)||_{\ell^\infty H^s}
			\lesssim C\Big( ||u||_{L^\infty_x}\Big) ||u||_{\ell^\infty H^s}^2. 
	\]

\subsubsection*{\underline{Concluding the estimate}} 

Recall the sequence of moves we took consisted of multiplying the paradifferential equation \eqref{paradiff} by $\overline{\chi u_N}$, taking the real part, integrating on the space-time region $[0, t] \times \R^d$, taking the supremum in $j \in \Z^d$, multiplying by $N^{2s}$, and summing in $N \in 2^\N$. Collecting the resulting expressions for the non-perturbative terms on the left-hand side and the estimates for the perturbative terms on the right-hand side, we obtain 	
	\[
		 ||u (t) ||_{\ell^\infty H^s}^2 \leq  ||u_0||_{\ell^\infty H^s}^2 + \int_0^t  C\Big( ||u (t') ||_{L^\infty_x}, ||\nabla u(t')||_{L^\infty_x} \Big) ||u (t')||_{\ell^\infty H^s}^2   \, dt'. 
	\]
We conclude the energy estimate \eqref{apriori} by Gronwall's inequality. 

\subsection{Non-linear $\ell^\infty H^0$-difference bounds}\label{subsec:difference}

We would like to show continuous dependence of solutions to \eqref{dispersive} on the initial data with respect to the $\ell^\infty H^s$-topology for $s > \tfrac{d}2 + 1$. It will be convenient to start by showing a weaker statement, namely Lipschitz continuity with respect to the $\ell^\infty H^0$-topology. 

\begin{theorem}[$\ell^\infty H^0$-Lipschitz continuity]
	Let $u, v \in C^0_t (\ell^\infty H^{\frac{d }{2} + 1 +})_x ([0, T]\times \R^d)$ be solutions to \eqref{dispersive}, then
		\begin{equation}\label{eq:lipschitz}
			||u(t) - v(t)||_{\ell^\infty H^0}^2 \leq ||u_0 - v_0||_{\ell^\infty H^0}^2 \exp \Big( \int_0^t C \Big( ||u(t')||_{\ell^\infty H^{\frac{d}{2}+ 1 +}} , ||v(t')||_{\ell^\infty H^{\frac{d}{2}+ 1 +}} \Big)  \, dt' \Big),
		\end{equation}
	where $C ( ||u(t')||_{\ell^\infty H^{\frac{d}{2}+1 +}} , ||v(t')||_{\ell^\infty H^{\frac{d}2+ 1 +}})$ is some polynomial with non-negative coefficients. 
\end{theorem}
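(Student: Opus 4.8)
The plan is to run the same energy method as in the proof of the energy estimate \eqref{apriori}, but applied to the difference $w := u - v$, measuring everything in the $\ell^\infty H^0$-norm rather than $\ell^\infty H^s$. Subtracting the two copies of \eqref{dispersive} gives an equation for $w$ of the schematic form
	\[
		\partial_t w + \sfA(\nabla) w + \cQ(u) \nabla w = \big(\cQ(v) - \cQ(u)\big) \nabla v + \cN(u) - \cN(v).
	\]
I would then localise in frequency and in physical space exactly as before, deriving the paradifferential equation for $\chi w_N$ with non-perturbative part $\mathsf L w_N + \mathsf T_{\cQ(u)} \nabla w_N$ and a perturbative part collecting the commutators, the high-low and high-high pieces of $\cQ(u)\nabla w$, together with the genuinely new source terms $(\cQ(v) - \cQ(u))\nabla v$ and $\cN(u) - \cN(v)$. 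Multiplying by $\overline{\chi w_N}$, taking real parts, integrating over $[0,t] \times \R^d$, taking the supremum over $j$, and square-summing in $N$ (with \emph{no} factor $N^{2s}$, i.e. $s = 0$) should produce the inequality
	\[
		||w(t)||_{\ell^\infty H^0}^2 \leq ||w(0)||_{\ell^\infty H^0}^2 + \int_0^t C\big( \cdots \big) \, ||w(t')||_{\ell^\infty H^0}^2 \, dt',
	\]
and Gronwall finishes the argument. The only structural difference from Section \ref{subsec:nonlin} is bookkeeping: the constant now depends on $u$ and $v$, and since we have no derivatives to spend we must pay for every $L^\infty_x$ and $\nabla(\cdot)$ in $L^\infty_x$ norm by a full $\ell^\infty H^{\frac d2 + 1 +}$-norm via Sobolev embedding \eqref{sobolev}, which is exactly why the hypothesis is $u, v \in \ell^\infty H^{\frac d2 + 1 +}$.

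Concretely, the non-perturbative terms are handled identically to before. The linear part $\mathsf L w_N$ contributes the boundary terms plus the commutator $[\chi, \sfA(\nabla)] w_N$, controlled by \eqref{commsymb}. The low-high derivative term $\cQ(u)_{\leq N/128} \nabla(\chi w_N)$ is integrated by parts, producing $\nabla \cQ(u)$ in $L^\infty_x \lesssim C(||u||_{\ell^\infty H^{\frac d2 + 1 +}})$ times $||w_N||_{(\ell^\infty L^2)_N}^2$. The commutator terms $\cQ(u)_{\leq N/128}[\chi, \nabla] w_N$ and $[\cQ(u)_{\leq N/128}, P_N]\nabla \widetilde{P_N} w$, the high-low term $P_N(\widetilde{P_N}\cQ(u) \nabla w_{\leq N/128})$, and the high-high term $\sum_{A \sim B} P_N(\cQ(u)_A \nabla w_B)$ are all estimated as in Section \ref{subsec:nonlin}, now placing $w_N$ (resp.\ $w_B$) in $(\ell^\infty L^2)$ and the $\cQ(u)$-factors through the chain rule \eqref{chain1}, giving $C(||u||_{\ell^\infty H^{\frac d2+1+}}) \, ||w||_{\ell^\infty H^0}^2$ after square-summing (the high-high sum converges because $s = 0 < \frac d2 +$ — wait, here it is the $s>0$ structure that we lose, so one uses instead the Coifman-Meyer-type bound \eqref{highhigh}/\eqref{product0} with $w$ in $\ell^\infty H^0$ and $\cQ(u)$ in $\ell^\infty H^{\frac d2 +}$, which is affordable since $\frac d2 + < \frac d2 + 1 +$).

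For the genuinely new terms I would use the difference estimates for polynomials. The source $(\cQ(v) - \cQ(u))\nabla v$: placing $\nabla v$ in $L^\infty_x$ and using the $s=0$ Lipschitz bound \eqref{chain3} (or the product estimate \eqref{product0} combined with the difference structure of $\cQ$) gives $||(\cQ(v)-\cQ(u))\nabla v||_{\ell^\infty H^0} \lesssim ||\nabla v||_{L^\infty_x}\, C(||u||_{\ell^\infty H^{\frac d2 +}}, ||v||_{\ell^\infty H^{\frac d2 +}}) \, ||w||_{\ell^\infty H^0}$; note $\nabla$ lands on $v$, not on $w$, so no derivative loss occurs. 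The term $\cN(u) - \cN(v)$ is handled directly by the Lipschitz bound \eqref{chain3}. Pairing all of these against $w_N$ by Cauchy-Schwartz and square-summing produces $C(\cdots) ||w||_{\ell^\infty H^0}^2$, closing the Gronwall loop. The main obstacle, and the reason for the exact threshold $\frac d2 + 1 +$, is the term where the derivative in $\cQ(u)\nabla w$ falls on the high-frequency factor of $w$: there is no room to move it onto $w$ by symmetry (as one does for $s > 0$), so it must be integrated by parts against $\chi w_N$ itself, which is precisely the low-high non-perturbative term above — this works only because $\cQ(u)$ is a function of $|u|^2$ (real, hence the principal symbol is real after the integration by parts), which is exactly the gauge-covariance structure flagged in the acknowledgements as essential.
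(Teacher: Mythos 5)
Your proposal is correct and takes essentially the same approach as the paper: subtract the two copies of the equation, run the Section~\ref{subsec:nonlin} paradifferential energy method at $s=0$ on $w=u-v$ (which the paper compresses to ``following the proof of the linear and non-linear estimates''), and control the remaining source terms $(\cQ(v)-\cQ(u))\nabla v$ and $\cN(u)-\cN(v)$ via the endpoint product estimate \eqref{product0} and Lipschitz chain rule \eqref{chain3}, closing with Gronwall. One small imprecision: for $(\cQ(v)-\cQ(u))\nabla v$ you write $\|\nabla v\|_{L^\infty_x}$, but the available $\ell^\infty H^0$-product bound \eqref{product0} places the smooth factor in $\ell^\infty H^{\frac d2+}$ rather than $L^\infty_x$; this is what the paper uses and is harmless here since $v\in\ell^\infty H^{\frac d2+1+}$.
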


Subtracting the respective equations of two solutions $u, v \in L^\infty_t (\ell^\infty H^{\frac{d }{2} + 1 +})_x ([0, T]\times \R^d)$ to \eqref{dispersive}, we see that their difference satisfies 
\begin{equation}\label{eq:differenceeq}
	\begin{split}
	\partial_t (u - v) + \sfA(\nabla) (u - v) + \cQ(u) \nabla (u - v)  
		&=\big( \cN(u) - \cN(v) \big)+ \big(\cQ(v) -\cQ(u)\big)\nabla v,\\
	(u - v)_{|t = 0}
		&= u_0 - v_0. 	
	\end{split}
\end{equation}
Following the proof of the linear \eqref{linearenergy} and non-linear estimates \eqref{apriori}, we have good bounds arising from the terms on the left-hand side. To bound the terms on the right-hand side, we use the endpoint product estimate \eqref{product0} and the Lipschitz bounds \eqref{chain3},
\begin{align*}
	||(\cQ(u) - \cQ(v)) \nabla v||_{\ell^\infty H^0} 
		&\lesssim  ||\nabla v||_{\ell^\infty H^{\frac{d}2+}} ||\cQ(u) - \cQ(v)||_{\ell^\infty H^0} \\
		&\lesssim  C \Big( ||u||_{\ell^\infty H^{\frac{d}{2} +}} , ||v||_{\ell^\infty H^{\frac{d}{2}+ 1 +}} \Big)   ||u - v||_{\ell^\infty H^0}. 
\end{align*}	
and	
\begin{align*}
	||\cN(u) - \cN(v)||_{\ell^\infty H^0} 
		&\lesssim C \Big( ||u||_{\ell^\infty H^{\frac{d}{2} +}} , ||v||_{\ell^\infty H^{\frac{d}{2}+}} \Big) ||u - v||_{\ell^\infty H^0}.
\end{align*}
This completes the proof. 

\section{Existence of a solution}\label{sec:existence}

Since $\ell^\infty H^s (\R^d)$ for $s > \tfrac{d}{2}$ is a Banach algebra, \eqref{product}, and preserved by the linear flow, \eqref{linearenergy}, we can apply the semigroup method of Kato \cite{Kato1993} to prove local well-posedness for a large class of non-linear perturbations of the linear equation \eqref{lindispersive}. This will be our basic starting point for constructing solutions to the non-linear equation \eqref{dispersive}. 

\begin{lemma}[Abstract local well-posedness scheme]\label{lem:iteration}
	Consider the initial data problem for the non-linear dispersive equation 
	\begin{equation}\label{eq:general}
		\begin{split}
			\partial_t u + \sfA(\nabla) u 
				&= \cB(u),\\
			u_{|t = 0}
				&= u_0,
		\end{split}
	\end{equation}
	where, for $s \geq 0$, the non-linear operator $\cB: \ell^\infty H^s (\R^d) \to \ell^\infty H^s (\R^d)$ satisfies local Sobolev bounds 
	\begin{equation}\label{eq:local1}
		||\cB(u)||_{\ell^\infty H^s} 
			\lesssim_{R} ||u||_{\ell^\infty H^s}, 
	\end{equation}
	and local Lipschitz bounds, 
	\begin{equation}\label{eq:local2}
		||\cB(u) - \cB(v)||_{\ell^\infty H^s}
			\lesssim_{R} ||u - v||_{\ell^\infty H^s}
	\end{equation}
	for $||u||_{\ell^\infty H^s}, ||v||_{\ell^\infty H^s} \leq R$. Then \eqref{general} is locally well-posed in $\ell^\infty H^s (\R^d)$. In fact, the data to solution map $u_0 \mapsto u$ is locally Lipschitz continuous. 
\end{lemma}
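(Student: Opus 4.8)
The plan is to recast \eqref{general} in Duhamel (mild) form and solve by a contraction mapping, exactly as for a semilinear equation whose linear part generates a bounded semigroup. Writing $S(t) := e^{-t\sfA(\nabla)}$, a solution of \eqref{general} on $[0,T]$ should be a fixed point of
\[
\Phi(u)(t) := S(t) u_0 + \int_0^t S(t - t') \, \cB(u(t')) \, dt'
\]
in the space $X_T := C^0_t(\ell^\infty H^s)_x([0,T]\times\R^d)$. The first thing I would verify is that $S(t)$ is a strongly continuous semigroup on $\ell^\infty H^s(\R^d)$: uniform boundedness on $[0,1]$ is exactly the linear estimate \eqref{linearenergy}, and for strong continuity at $t=0$ I would first handle frequency-truncated data, for which $S(t)P_{\leq M}u_0 - P_{\leq M}u_0 = -\int_0^t \sfA(\nabla)S(t')P_{\leq M}u_0\,dt'$ while $\sfA(\nabla)$ acts boundedly on the range of $P_{\leq M}$ (its symbol $\sfA(\xi)\phi(\xi/M)$ has an $L^1_x$ kernel, by the argument used for \eqref{kernel} in Section~\ref{sec:linear}), so the difference is $O_M(t)$; the general case then follows by approximating $u_0$ with $P_{\leq M}u_0$ via the smooth approximation \eqref{approx} and using the uniform bound. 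Granting this, $t\mapsto\cB(u(t))$ is continuous into $\ell^\infty H^s$ whenever $u\in X_T$ by the Lipschitz hypothesis \eqref{local2}, the integrand in $\Phi$ is continuous and bounded hence Bochner integrable — a continuous curve in $\ell^\infty H^s$ has separable range, so non-separability of the ambient space causes no issue — and $\Phi$ maps $X_T$ into itself.

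Next I would run the contraction. Let $C_0\geq 1$ bound $\|S(t)\|_{\ell^\infty H^s\to\ell^\infty H^s}$ on $[0,1]$, fix $R$ comparable to $\|u_0\|_{\ell^\infty H^s}$ (say $R=2C_0\|u_0\|_{\ell^\infty H^s}$), let $K(R)$ be the constant in \eqref{local1}--\eqref{local2} on the ball of radius $R$, and work on the closed ball $B_R\subseteq X_T$ with $T\leq 1$. Then \eqref{linearenergy} and \eqref{local1} give $\|\Phi(u)\|_{X_T}\leq \tfrac{R}{2}+C_0 T K(R) R$, and \eqref{local2} gives $\|\Phi(u)-\Phi(v)\|_{X_T}\leq C_0 T K(R)\|u-v\|_{X_T}$; choosing $T=T(R)\leq 1$ with $C_0 T K(R)\leq\tfrac12$ makes $\Phi:B_R\to B_R$ a contraction, and the Banach fixed point theorem produces the mild solution $u\in B_R$. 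Since $\cB(u)\in X_T$ and $\sfA(\nabla)$ maps $\ell^\infty H^s$ boundedly into $\ell^\infty H^{s-\sigma-1}$ (again by the kernel estimate of Section~\ref{sec:linear}, now with $N^{\sigma+1}$ in place of $N^\sigma$), the fixed point satisfies $\partial_t u = -\sfA(\nabla)u+\cB(u)\in C^0_t(\ell^\infty H^{s-\sigma-1})_x$, so it solves \eqref{general} in the strong sense at the level of $\ell^\infty H^{s-\sigma-1}$.

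Finally, for uniqueness in all of $X_T$ and for the locally Lipschitz dependence I would subtract two Duhamel formulas and apply Gronwall: if $u,v\in X_T$ solve \eqref{general} with data $u_0,v_0$ and $R'$ bounds both of their norms, then for $t\leq\min(T,1)$,
\[
\|u(t)-v(t)\|_{\ell^\infty H^s}\leq C_0\|u_0-v_0\|_{\ell^\infty H^s}+C_0 K(R')\int_0^t\|u(t')-v(t')\|_{\ell^\infty H^s}\,dt',
\]
so $\|u(t)-v(t)\|_{\ell^\infty H^s}\leq C_0 e^{C_0 K(R')t}\|u_0-v_0\|_{\ell^\infty H^s}$; taking $u_0=v_0$ gives uniqueness, and for data of size at most $\rho$ (so that $R'$ and $T$ can be chosen uniformly) this is the asserted local Lipschitz estimate for $u_0\mapsto u$. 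I do not expect a genuine obstacle: the scheme is textbook Picard iteration, and the only points needing care are the strong continuity of $S(t)$ on the non-separable space $\ell^\infty H^s$ — which is why I would isolate it first, leaning on \eqref{approx} — and the modest derivative loss $\ell^\infty H^s\to\ell^\infty H^{s-\sigma-1}$ incurred in upgrading the mild solution, which is harmless in the applications to \eqref{dispersive}.
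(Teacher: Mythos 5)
Your proposal is correct and takes essentially the same route as the paper: recast \eqref{general} in Duhamel form and run a contraction mapping on a ball in $C^0_t(\ell^\infty H^s)_x$, using the propagator bound \eqref{linearenergy} together with \eqref{local1}--\eqref{local2}. You additionally fill in details the paper explicitly defers or leaves implicit — strong continuity of $e^{-t\sfA(\nabla)}$ on $\ell^\infty H^s$ (via \eqref{approx}), Bochner integrability despite non-separability of the ambient space, the Gronwall argument for uniqueness and Lipschitz dependence, and the upgrade from mild to strong solution — which is welcome but does not change the method.
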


\begin{proof}
	The proof is standard; we detail that of local existence, leaving uniqueness and Lipschitz continuous dependence on data as exercises. Consider the Duhamel integral formulation of the equation 
	\begin{equation}\label{eq:duhamel}
		u(t) 
			= e^{-t \sfA(\nabla)} u_0 + \int_0^t e^{-(t - t') \sfA(\nabla)} \cB(u(t')) \, ds. 
	\end{equation}
	From the perspective of the Duhamel formula \eqref{duhamel}, $u \in C^0_t (\ell^\infty H^s)_x([0, T] \times \R^d)$ is a solution to \eqref{general} if it is a fixed point of the map  
		\[
			\Phi(u)(t)
				:= e^{-t \sfA(\nabla)} u_0 + \int_0^t e^{-(t - t') \sfA(\nabla)} \cB(u(t')) \, dt'.
		\]	
	We argue by the contraction mapping principle, showing that $\Phi$ is a contraction mapping on the closed ball $B_{R} \subseteq C^0_t (\ell^\infty H^s)_x ([0, T] \times \R^d)$ with radius $R := 2 ||u_0||_{\ell^\infty H^s}$ and a small time $0 < T \ll 1$ to be chosen later. By Minkowski's inequality, the linear propagator bounds \eqref{linearenergy} and the local Sobolev bounds \eqref{local1}
		\begin{align*}
			||\Phi(u) (t)||_{\ell^\infty H^s}
				&\leq ||u_0||_{\ell^\infty H^s} \exp (O(t)) + ||\cB(u)||_{C^0_t (\ell^\infty H^s)_x} \int_0^t \exp(O(t - t')) \, dt'\\
				&\leq||u_0||_{\ell^\infty H^s} \exp (O(t))+ O_R \Big(T ||u||_{C^0_t (\ell^\infty H^s)_x}\Big) \leq R,
		\end{align*}
	taking $T \ll_R 1$ sufficiently small to defeat the implicit constants. This shows that $\Phi$ maps the ball to itself $\Phi : B_R \to B_R$. To show that $\Phi$ is a contraction, a similar argument using in addition the local Lipschitz bounds \eqref{local2} furnishes 
		\begin{align*}
			||\Phi(u)(t) - \Phi(v)(t)||_{\ell^\infty H^s}
				&\leq  ||\cB(u) - \cB(v)||_{C^0_t (\ell^\infty H^s)_x}  \int_0^t \exp(O(t - t')) \, dt' \\
				&\leq O_R \Big( T ||u - v||_{C^0_t (\ell^\infty H^s)_x}\Big) \leq \frac12 ||u - v||_{C^0_t (\ell^\infty H^s)_x}. 
		\end{align*}
	By the contraction mapping principle, $\Phi$ admits a fixed point, completing the proof.  
\end{proof}

It follows then from the chain rule \eqref{chain1}-\eqref{chain2} and Sobolev embedding \eqref{sobolev} that the equation \eqref{dispersive} without derivative non-linearity, i.e. $\cQ(u) \equiv 0$, is locally well-posed in $\ell^\infty H^s (\R^d)$ for $s > \tfrac{d}{2}$ with Lipschitz dependence on the initial data. Of course, when the derivative in the non-linearity is present, $\cQ(u) \neq 0$, one has to work a little harder. This is the subject of the remainder of Section \ref{sec:existence}. 

\subsection{Constructing approximate solutions}

If one naively attempts to implement the iteration scheme from Lemma \ref{lem:iteration}, one quickly sees that the derivative non-linearity $\cQ (u) \nabla u$ incurs a loss of regularity when trying to prove bounds of the form \eqref{local1}-\eqref{local2}. Instead, we can implement Lemma \ref{lem:iteration} to construct solutions $u^{(M)} \in C^0_t (\ell^\infty H^s)_x ([0, T^{(M)}] \times \R^d)$ for $s > \tfrac{d}{2} + 1$ to the regularised equation 
\begin{equation}\label{eq:galerkin}
	\begin{split}
	\partial_t u^{(M)} + \sfA(\nabla) u^{(M)} + \cQ(u^{(M)})  \nabla \Big( P_{\leq M} P_{\leq M} u^{(M)} \Big) 
		&= \cN(u^{(M)}),\\
	u^{(M)}_{|t = 0}
		&= u_0.
	\end{split}
\end{equation}
We recover the original equation \eqref{dispersive} formally taking $M \to \infty$ in the regularised equation \eqref{galerkin}. Thus $\{u^{(M)}\}_M \subseteq C^0_t (\ell^\infty H^s)_x$ forms a sequence of approximate solutions which we want to show converge (in a weaker topology, e.g. $C^0_t (\ell^\infty H^0)_x$) to a solution $u \in C^0_t (\ell^\infty H^s)_x$ to the original equation \eqref{dispersive}. 

To show \eqref{galerkin} is well-posed, observe that the non-linearity  
\[
	\cB(u^{(M)}) := \cN(u^{(M)}) - \cQ(u^{(M)})  \nabla \Big(P_{\leq M} P_{\leq M} u^{(M)} \Big)
\]
satisfies \eqref{local1}-\eqref{local2} and so we can directly apply the Picard iteration argument of Lemma \ref{lem:iteration}. Indeed, introducing the frequency cut-offs\footnote{The choice of regularisation $P_{\leq M} P_{\leq M}$ might seem excessive at first glance, though as we shall see in the sequel it will be convenient for proving energy estimates. } $P_{\leq M}$ allows us to avoid the derivative-loss in the estimates at the cost of the implicit constants growing linearly in $M$, 
and thus the maximal time of existence $T^{(M)}$ possibly shrinking in $M$. Applying the product bounds \eqref{product}, difference bounds \eqref{chain2}, Sobolev embedding \eqref{sobolev}, Sobolev-Bernstein \eqref{sobolevbernstein}, and boundedness of the projections \eqref{bounded},
	\begin{align*}
		||\cB(u) - \cB(v)||_{\ell^\infty H^s}
			&\leq ||\cN(u) - \cN(v)||_{\ell^\infty H^s} \\
			&\qquad + || \big(\cQ(u) - \cQ(v)\big)  \nabla\big( P_{\leq M} P_{\leq M} u\big) ||_{\ell^\infty H^s} + ||\cQ(u)\nabla P_{\leq M} P_{\leq M} \big(u - v \big)||_{\ell^\infty H^s} \\
			&\lesssim  M \cdot  C\big(||u||_{\ell^\infty H^s}, ||v||_{\ell^\infty H^s}\big)  ||u - v||_{\ell^\infty H^s},
	\end{align*}
and clearly $\cB(0) \equiv 0$. 

\subsection{Uniform energy bounds}

While the $\ell^\infty H^s$-estimates arising from Lemma \ref{lem:iteration} fail to furnish any uniform bounds on the sequence as $M \to \infty$, we can instead appeal to the proof of the \textit{a priori} estimate \eqref{apriori} for the original equation to show 
	\begin{equation}\label{eq:aprioriM}
		||u^{(M)} (t) ||_{\ell^\infty H^s}^2 
			\leq ||u_0||_{\ell^\infty H^s}^2 \exp \Big( \int_0^t C \Big( ||u^{(M)}(t')||_{L^\infty_x}, || \nabla u^{(M)} (t')||_{L^\infty_x} \Big) \, dt' \Big). 
	\end{equation}
It would then follow from Sobolev embedding \eqref{sobolev} and a bootstrap argument that the approximate solutions $\{u^{(M)}\}_M$ can be continued up to a common time $T \sim C(||u_0||_{\ell^\infty H^s})^{-1}$ and satisfy the uniform energy estimate 
	\begin{equation}\label{eq:uniformM}
		||u^{(M)}||_{C^0_t (\ell^\infty H^s)_x([0, T]\times \R^d)} 
			\leq 2 ||u_0||_{\ell^\infty H^s}.
	\end{equation}
Let us proceed then to the proof of the \textit{a priori} estimate \eqref{aprioriM}. Using the notation \eqref{linear}-\eqref{hihiparadiff}, the paradifferential equation for the regularised equation \eqref{galerkin} is
	\begin{equation} \label{eq:paradiffgalerkin}
		\mathsf L u_N^{(M)} + \mathsf T_{\cQ(u^{(M)})} \nabla P_{\leq M} P_{\leq M} u_N ^{(M)}
			= \chi \cN(u^{(M)})_N - \cB_N (\cQ(u^{(M)}), P_{\leq M} P_{\leq M} u^{(M)}). 
	\end{equation}
where the key term to remember the precise form of is the low-high term 
	\[
		\mathsf T_{\cQ(u^{(M)})} \nabla P_{\leq M} P_{\leq M} u_N ^{(M)} = \cQ(u^{(M)})_{\leq \frac{N}{128}}  \nabla \Big( \chi P_{\leq M} P_{\leq M} u_N^{(M)}\Big).
	\]
Multiplying by $\overline{\chi u^{(M)}_N}$, taking the real part, and integrating, the first, third, and fourth terms can be dealt with exactly as in the proof of the original energy estimate \eqref{apriori}; the projections $P_{\leq M}$ are harmless as they are bounded operators uniformly in $M$. For the low-high term, we have $P_{\leq M}P_{\leq M} u_N \equiv u_N$ when $N \leq \tfrac{M}{128}$, in which case we proceed via the integration-by-parts argument as in the proof of \eqref{apriori}, while $P_{\leq M}P_{\leq M} u_N \equiv 0$ when $N \geq 128 M$. 

It remains then to estimate the low-high term in \eqref{paradiffgalerkin} when $N \sim M$. While the Littlewood-Paley projections are not true projections $P_{\leq N} P_{\leq N} u_N \neq u_N$, thanks to our judicious choice of regularisation $P_{\leq M} P_{\leq M}$ we can nevertheless recover a total derivative using self-adjointness of $P_{\leq M}$ and commuting,
		\begin{align*}
			\int_{\R^d} \overline{\big(\chi u_N^{(M)}\big)}\cQ(u^{(M)})_{\leq \frac{N}{128}}  \nabla \Big( \chi P_{\leq M} P_{\leq M} u_N^{(M)}\Big) \, dx 				
				&=\int_{\R^d} \overline{P_{\leq M} \big(\chi u_N^{(M)}\big) } \cQ(u^{(M)})_{\leq \frac{N}{128}} \nabla \Big( P_{\leq M} \big(\chi u^{(M)}_N\big) \Big) \, dx \\
				&\qquad + \int_{\R^d} \overline{\big(\chi u_N^{(M)}\big)}\cQ(u^{(M)})_{\leq \frac{N}{128}}  \nabla \Big( \big[\chi, P_{\leq M} P_{\leq M}\big] u_N^{(M)}\Big) \, dx \\
				&\qquad + \int_{\R^d} \overline{\big(\chi u_N^{(M)} \big)\big[\cQ(u^{(M)})_{\leq \frac{N}{128}}, P_{\leq M}\big]} \nabla \Big( P_{\leq M} \big( \chi u_N^{(M)}\big) \Big) \, dx\\
				&=: \mathrm{I} + \mathrm{II} + \mathrm{III}. 
		\end{align*}
	Taking the real part, we can integrate-by-parts to move the derivative in $\mathrm{I}$ onto the low frequency term, 
		\begin{align*}
			\Re \mathrm{I}
				&= \int_{\R^d} \cQ(u^{(M)})_{\leq \frac{N}{128}} \nabla \Big( \frac12 \big|P_{\leq M} \big(\chi u^{(M)}_N\big) \big|^2\Big)\, dx \\
				&= - \frac12 \int_{\R^d} \nabla \cQ(u^{(M)})_{\leq \frac{N}{128}}  \big|P_{\leq M} \big(\chi u^{(M)}_N\big) \big|^2 \, dx. 
		\end{align*}	
	Placing the low frequency term in $L^\infty_x$, 
		\begin{align*}
			|\Re\mathrm{I}|
				&\lesssim || \nabla \cQ(u^{(M)})||_{L^\infty_x} ||\chi P_{\leq M} u_N||_{L^2_x}^2 \\
				&\lesssim C\Big( ||u^{(M)}||_{L^\infty_x}, ||\nabla u^{(M)}||_{L^\infty_x} \Big) ||u^{(M)}||_{(\ell^\infty L^2)_N}. 
		\end{align*}
	The commutator terms $\mathrm{II}$ and $\mathrm{III}$ are similarly favourable. Indeed, by Cauchy-Schwartz, Sobolev-Bernstein \eqref{sobolevbernstein} and a variant of the commutator bound \eqref{commutator}, we have 
		\begin{align*}
			|\mathrm{II}|
				&\leq ||u^{(M)}_N||_{(\ell^\infty L^2)_N} ||\cQ(u^{(M)})_{\leq \frac{N}{128}}||_{L^\infty_x} N \Big|\Big| \big[ \chi, P_{\leq M} P_{\leq M} \big] u_N^{(M)} \Big|\Big|_{(\ell^\infty L^2)_N} \\
				&\lesssim C\Big( ||u^{(M)}||_{L^\infty_x} \Big) ||\nabla \chi||_{L^\infty_x}  ||u^{(M)}_N||_{(\ell^\infty L^2)_N}^2 
		\end{align*}
	and 
		\begin{align*}
			|\mathrm{III}|
				&\leq  ||u^{(M)}_N||_{(\ell^\infty L^2)_N} \Big|\Big| \big[\cQ(u^{(M)})_{\leq \frac{N}{128}}, P_{\leq M}\big] \nabla \Big( P_{\leq M} \big( \chi u_N^{(M)}\big)  \Big) \Big|\Big|_{(\ell^\infty L^2)_N} \\
				&\lesssim \big|\big| \nabla \cQ(u^{(M)})_{\leq \frac{N}{128}}\big|\big|_{L^\infty_x} ||u^{(M)}||_{(\ell^\infty L^2)_N}^2\\
				&\lesssim C\Big( ||u^{(M)}||_{L^\infty_x}, ||\nabla u^{(M)} ||_{L^\infty_x}\Big) ||u^{(M)}_N||_{(\ell^\infty L^2)_N}^2. 
		\end{align*}
	This completes the proof of \eqref{aprioriM}. 

	\subsection{Convergence to a solution}
	
	When working in $H^s$-spaces, the textbook approach, e.g. Taylor's treatment \cite[Chapter 15.7]{Taylor2011b}, is to extract a convergent subsequence via a compactness argument. Unfortunately, as remarked at the end of Section \ref{subsec:properties}, the analogue of the Rellich-Kondrachov compact embedding theorem fails in the setting of $\ell^\infty H^s (\R^d)$, so to extract a limit from the approximate solutions $\{ u^{(M)}\}_M$ we need to work directly with the equations for the differences $u^{(A)} - u^{(B)}$ and argue by energy estimates. We claim that $\{ u^{(N)}\}_N$ forms a Cauchy sequence in the $C^0_t (\ell^\infty H^0)_x$-topology,
		\begin{align}\label{eq:cauchy}
			\lim_{N, M \to \infty} ||u^{(N)} - u^{(M)}||_{C^0_t (\ell^\infty H^0)_x} = 0. 
		\end{align}
	Thus by completness there exists $u \in C^0_t (\ell^\infty H^0)_x ([0, T] \times \R^d)$ for which the approximate solutions converge to in the $C^0_t (\ell^\infty H^0)_x$-topology. In fact, interpolating with the uniform $C^0_t (\ell^\infty H^s)_x$-bounds, we see that $u \in C^0_t (\ell^\infty H^{s-})_x ([0, T] \times \R^d)$. Using the difference bounds one can check that this is indeed a distributional solution to the equation \eqref{dispersive}. 

	To prove \eqref{cauchy}, subtract the equation \eqref{galerkin} for $u^{(B)}$ from that of $u^{(A)}$, 
		\begin{equation}\label{eq:differenceM}
			\begin{split}
				\partial_t \big(u^{(A)} - u^{(B)}\big) + \sfA(\nabla) \big(u^{(A)} - u^{(B)} \big)  + \cQ(u^{(A)}) \nabla \Big( P_{\leq A} P_{\leq A} \big(u^{(A)} - u^{(B)}\big) \Big) 
					&= \text{R.H.S.}\\
				\big(u^{(A)} - u^{(B)}\big)_{|t = 0}
					&= 0. 	
			\end{split}
		\end{equation}
	where the right-hand side is 
		\begin{align*}
			\text{R.H.S.}
				&:= \cN(u^{(A)}) - \cN(u^{(B)}) \\
				&\qquad+\Big( \cQ(u^{(B)}) - \cQ(u^{(A)}) \Big) \nabla \Big( P_{\leq A} P_{\leq A} u^{(B)}\Big)\\
				&\qquad+ \cQ(u^{(B)}) \nabla \Big( P_{\leq B} P_{\leq B} u^{(B)} - P_{\leq A} P_{\leq A} u^{(B)} \Big) \\
				&=: \mathrm{I} + \mathrm{II} + \mathrm{III}.
		\end{align*}
	The left-hand side of the difference equation \eqref{differenceM} can be treated exactly as in the proof of the uniform energy bounds  \eqref{aprioriM} for the approximate solutions, modulo using the endpoint Coifman-Meyer product estimates \eqref{lowhigh1}-\eqref{product0}. On the right-hand side, we estimate the first two terms using the endpoint product estimate \eqref{product0} and $\ell^\infty H^0$-Lipschitz bounds \eqref{chain3}, 
		\begin{align*}
			||\mathrm{I} ||_{\ell^\infty H^0} 
				&\lesssim C \Big( ||u^{(A)}||_{\ell^\infty H^{\frac{d}{2} +}} , ||u^{(B)}||_{\ell^\infty H^{\frac{d}{2} +}}\Big) ||u^{(A)} - u^{(B)}||_{\ell^\infty H^0}\\
				&\lesssim C\Big(||u_0||_{\ell^\infty H^s}\Big) ||u^{(A)} - u^{(B)}||_{\ell^\infty H^0}
		\end{align*}
	and 
		\begin{align*}
			|| \mathrm{II} ||_{\ell^\infty H^0} 	
				&\lesssim ||\cQ(u^{(B)}) - \cQ(u^{(A)})||_{\ell^\infty H^0} ||u^{(B)}||_{\ell^\infty H^{\frac{d}{2} + 1 +}}
				\\
				&\lesssim C \Big( ||u^{(A)}||_{\ell^\infty H^{\frac{d}{2}+}} , ||u^{(B)}||_{\ell^\infty H^{\frac{d}{2} + 1 +}}\Big) ||u^{(A)} - u^{(B)}||_{\ell^\infty H^0}\\
				&\lesssim C\Big(||u_0||_{\ell^\infty H^s}\Big)  ||u^{(A)} - u^{(B)}||_{\ell^\infty H^0},
		\end{align*}
	where in the last lines we have used the uniform energy bound \eqref{uniformM}. For the third term on the right-hand side, observe that one of the terms is supported at high frequencies, so we can gain some smallness from the decay of high frequencies. More precisely, suppose without loss of generality $A \leq B$, then, writing $P_{\leq B} P_{\leq B} - P_{\leq A} P_{\leq A} = P_{> \frac{A}{128}}(P_{\leq B} P_{\leq B} - P_{\leq A} P_{\leq A})$, we can estimate 
		\begin{align*}
			||\mathrm{III}||_{\ell^\infty H^0} 
				&\lesssim ||\cQ(u^{(B)})||_{\ell^\infty H^0} || P_{> \frac{A}{128}} u^{(B)}||_{\ell^\infty H^{\frac{d}{2} + 1 + }}
				\\
				&\lesssim A^{0-} C \Big( ||u^{(B)}||_{\ell^\infty H^{\frac{d}{2} +}}\Big) ||u^{(B)}||_{\ell^\infty H^s}\\
				&\lesssim C\Big(||u_0||_{\ell^\infty H^s}\Big)  A^{0-} .
		\end{align*}
	Collecting the estimates above and applying Gronwall's inquality, we conclude 
		\[
			||u^{(A)} - u^{(B)}||_{C^0_t(\ell^\infty H^0)_x}^2 
				\lesssim C\Big(||u_0||_{\ell^\infty H^s}\Big)  A^{0-} \overset{A, B \to \infty}{\longrightarrow} 0 
		\]
	as desired. 

\section{Continuous dependence on data}\label{sec:cts}

To conclude local well-posedness for \eqref{dispersive}, it remains to show that the flow $t \mapsto u(t)$ in the phase space $\ell^\infty H^s (\R^d)$ is not only bounded but also continuous in time, and the data-to-solution map $u_0 \mapsto u$ is continuous with respect to the $\ell^\infty H^s$-topology. Our presentation closely follows the strategy of Ifrim-Tataru \cite[Section 5, 6]{IfrimTataru2022b}, and so we will keep the discussion brief. 

Before turning to the proofs, we need to introduce our main tool, the \textit{frequency envelope}, a bookkeeping device originally introduced by Tao in the context of wave maps \cite{Tao2001}. While the linear flow preserves frequency-localisation, different frequencies interact with one another under the non-linear flow, causing leakage of energy between nearby dyadic frequencies. To control this leakage, we define a $\delta$-frequency envelope $\{c_N\}_N \in \ell^2 (2^\N)$ for the initial data $u_0 \in \ell^\infty H^s (\R^d)$ by the following three properties:
	\begin{enumerate}
		\item energy bounds, 
			\begin{equation}
				N^s ||P_N u_0||_{(\ell^\infty L^2)_N} \leq c_N, 
            \end{equation}
		\item slowly varying,
			\begin{equation}\label{eq:slow}
				\frac{c_N}{c_M} \lesssim 2^{\delta|n - m|},
            \end{equation}
		\item sharpness,
			\begin{equation}
				||c_N||_{\ell^2_N} \sim ||u_0||_{\ell^\infty H^s}.
            \end{equation}
	\end{enumerate}	
To construct such an envelope, we take
	\[
		c_N := \max_{M \in 2^{\N}} 2^{- \delta |n - m|} M^s ||P_M u_0||_{(\ell^\infty L^2)_M}.
	\]	    
While it may be difficult to control the Littlewood-Paley piece of the solution $P_N u(t)$ by that of the initial data $P_N u_0$ due to the leakage between frequencies, we can hope to propagate the frequency envelope $\{c_N\}_N$ for the initial data $u_0$ to one for the solution $u(t)$ thanks to the slowly varying property \eqref{slow}. 

We will also need the following observations for the regularised data $\{P_{\leq N} u_0\}_N \subseteq C^\infty (\R^d)$,
	\begin{enumerate}
		\item uniform energy bounds,
			\begin{equation}\label{eq:envelopeuniform}
				||P_M P_{\leq N} u_0 ||_{\ell^\infty H^s} 
					\lesssim c_M, 	
			\end{equation}
		\item high frequency bounds, 
			\begin{equation}\label{eq:envelopehigh}
				||P_{\leq N} u_0 ||_{\ell^\infty H^{s + k}} 
					\lesssim N^k c_N,
			\end{equation}

		\item difference bounds, 
			\begin{equation}\label{eq:envelopediff}
				||P_{\leq 2N} u_0 - P_{\leq N} u_0||_{\ell^\infty H^0}
					\lesssim N^{-s} c_N.
			\end{equation}
		\item convergence 
			\begin{equation}\label{eq:convergence}
				\lim_{N \to \infty} ||P_{\leq N} u_0 - u_0||_{\ell^\infty H^s} = 0.
			\end{equation}
	\end{enumerate}

\subsection{Continuity in time}

Let $\{u^{(N)}\}_N$ denote the sequence of solutions to \eqref{dispersive} with regularised initial data $\{ P_{\leq N} u_0 \}_N$. Observe that the energy estimate \eqref{apriori} furnishes persistence of $\ell^\infty H^\infty$-regularity for solutions on a time interval $[0, T]$ depending only on the $\ell^\infty H^{s}$-norm of the initial data. In particular, $u^{(N)}$ are smooth functions of space-time. To establish continuity in time then for our original rough solution $u \in L^\infty_t (\ell^\infty H^s)_x ([0, T] \times \R^d)$, it suffices to show that 
	\[
		\lim_{N \to \infty}||u - u^{(N)}||_{L^\infty_t (\ell^\infty H^s)_x} = 0
	\]
since the smooth solutions are continuous in time $\{u^{(N)}\}_N \subseteq C^0_t (\ell^\infty H^s)_x ([0, T] \times \R^d)$. By the energy estimate \eqref{apriori} and the high frequency bounds \eqref{envelopehigh}, the regularised solutions satisfy the high frequency bounds 
	\begin{equation}\label{eq:regularhigh}
		||u^{(N)}||_{C^0_t (\ell^\infty H^{s + k})_x} 
			\lesssim N^k c_N,
	\end{equation}
and, using the Lipschitz bounds \eqref{lipschitz} and difference bounds \eqref{envelopediff}, 
	\begin{equation}\label{eq:regulardiff}
		||u^{(2N)} - u^{(N)}||_{C^0_t (\ell^\infty H^0)_x} 
			\lesssim N^{-s} c_N. 
	\end{equation}
Interpolating between the high frequency bounds \eqref{regularhigh} and difference bounds \eqref{regulardiff}, we obtain difference bounds at higher regularities, 
	\begin{equation}
		||u^{(2N)} - u^{(N)}||_{C^0_t (\ell^\infty H^s)_x}
			\lesssim  c_N. 
	\end{equation}
It follows as in \cite{IfrimTataru2022b} that 
	\[
		||u - u^{(N)}||_{C^0_t (\ell^\infty H^s)_x} 
			\lesssim ||c_M||_{\ell^2 (M \geq N)} \overset{N \to \infty}{\longrightarrow} 0. 
	\]

\subsection{Continuous dependence on data}

Interpolating between the energy estimates \eqref{apriori} and the difference estimates \eqref{lipschitz}, we can show that the flow is Lipschitz continuous with respect to the $\ell^\infty H^\sigma$-topologies for $0 \leq \sigma \leq s-$. However, we want to show the stronger statement that the data to solution map is continuous with respect to the $\ell^\infty H^s$-topology, that is, 
    \[
        \lim_{j \to \infty}||u - u_j||_{C^0_t (\ell^\infty H^s)_x} = 0 \qquad \text{whenever } \lim_{j \to \infty} ||u_0 - u_{0j}||_{\ell^\infty H^s} = 0
    \]
for solutions $u$ and $\{u_j\}_j$ corresponding to data $u_0$ and $\{u_{0j}\}_j$. Rather than directly comparing the solutions, it is convenient to use the regularised solutions $u_j^{(N)}$ and $u^{(N)}$ as proxies. 
	\begin{align*}
		||u - u_j ||_{C^0_t (\ell^\infty H^s)_x}
			&\leq ||u^{(N)} - u^{(N)}_j||_{C^0_t (\ell^\infty H^s)_x} + || u^{(N)}_j - u_j||_{C^0_t \ell^\infty H^s} + ||u - u^{(N)}||_{C^0_t (\ell^\infty H^s)_x}\\
			&\leq ||u^{(N)} - u^{(N)}_j||_{C^0_t (\ell^\infty H^s)_x}  + ||c_M^j||_{\ell^2 (M \geq N)} + ||c_M||_{\ell^2 (M \geq N)}.
	\end{align*}
We can take the frequency envelopes such that $\{c_M^j\}_M \to \{c_M\}_M$ in $\ell^2 (2^\N)$. Thus, taking $j \to \infty$ and then $N \to \infty$, we conclude the result. 

\section{Propagation of almost periodicity}\label{sec:periodic}

As an application of our proof of local well-posedness, we show Theorem \ref{thm:AP}, that almost periodicity persists under the non-linear evolution, and that the spectrum remains within the integer span of the initial spectrum. 

Let $\Lambda \subseteq \R$ be a countable set of frequencies forming a $\Z$-module, i.e. closed under addition, subtraction, and contains the zero frequency. For example, the integer span of a spectrum forms a $\Z$-module. Define the set of trigonometric polynomials with frequencies in $\Lambda$ by 
	\[
		\cT_\Lambda (\R)
			:= \Big\{ \sum_{\lambda \in \Lambda} a_\lambda e^{i \lambda x} : \text{finitely-many non-zero $a_\lambda \in \C$}  \Big\}.
	\]
We define the space of \emph{(Bohr) almost periodic functions} with spectrum in $\Lambda$, denoted by $\mathtt{AP}_\Lambda (\R)$, as the closure of $\cT_\Lambda (\R)$ with respect to the uniform topology,  
\[
	\mathtt{AP}_\Lambda (\R) 
		:= \overline{\cT_\Lambda (\R)}^{L^\infty_x}.
\]
For $u \in \mathtt{AP}_\Lambda (\R)$, the \textit{spectrum},  
\[
	\sigma (u) 
		:= \Big\{ \xi \in \R : \lim_{L \to \infty} \frac{1}{2L} \int_{-L}^L u(x) e^{i x \xi} \, dx\neq 0 \Big\},
\] 
is well-defined, continuous with respect to the uniform topology, and satisfies $\sigma(u) \subseteq \Lambda$; see \cite[Chapter VI.5]{Katznelson2004}. 

When the frequency set is finitely-generated over the integers, i.e. $\Lambda = \omega \cdot \Z^n$ for some vector $\omega \in \R^n$, we say that the functions in $\mathtt{AP}_\Lambda (\R)$ are \textit{quasi-periodic}. The case where the frequency set is generated over the integers by a single real $\Lambda = \omega \Z$ for some $\omega \in \R$ concides with the usual notion of $\tfrac{2\pi}{\omega}$-\textit{periodicity}. If instead the frequency set is generated by a single real over the rationals, i.e. $\Lambda = \omega \Q$ for some $\omega \in \R$, we say that the functions in $\mathtt{AP}_\Lambda (\R)$ are \textit{limit periodic}. For further reading on these classes of almost periodic functions, we point the interested reader to the classical work of Besocovitch \cite{MR68029}. 

We will consider the class of almost periodic functions with $\ell^\infty H^s$-regularity, i.e. $(\mathtt{AP}_\Lambda \cap \ell^\infty H^s) (\R)$. For $s > \tfrac12$, it follows from Sobolev embedding \eqref{sobolev} that the $\ell^\infty H^s$-topology is stronger than the uniform topology, so it is in fact complete with respect to the $\ell^\infty H^s$-topology, 

\begin{lemma}[Completeness of almost periodic $\ell^\infty H^s$-functions]\label{lem:complete}
	For $s > \tfrac12$, the space of almost periodic functions with spectrum contained in $\Lambda$ and $\ell^\infty H^s$-regularity $(\mathtt{AP}_\Lambda \cap \ell^\infty H^s) (\R)$ forms a closed subspace of $\ell^\infty H^s (\R)$. 
\end{lemma}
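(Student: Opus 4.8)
The plan is to exploit the Sobolev embedding $\ell^\infty H^s (\R) \hookrightarrow C^0 (\R)$, which by \eqref{sobolev} (with $k = 0$ and $d = 1$) holds precisely when $s > \tfrac12$, and which converts convergence in the $\ell^\infty H^s$-norm into uniform convergence. Concretely, let $\{u_j\}_j \subseteq (\mathtt{AP}_\Lambda \cap \ell^\infty H^s)(\R)$ be a sequence converging in the $\ell^\infty H^s$-topology; since $\ell^\infty H^s (\R)$ is complete, the limit $u$ exists in $\ell^\infty H^s (\R)$, so it remains only to verify $u \in \mathtt{AP}_\Lambda (\R)$. First I would invoke \eqref{sobolev} to estimate $||u_j - u||_{L^\infty_x} \lesssim ||u_j - u||_{\ell^\infty H^s} \to 0$, so that $u_j \to u$ uniformly.

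Next, recall that $\mathtt{AP}_\Lambda (\R)$ is by definition the closure of the trigonometric polynomials $\cT_\Lambda (\R)$ with respect to the uniform topology, hence closed under uniform limits. Since each $u_j$ lies in $\mathtt{AP}_\Lambda (\R)$, the uniform limit $u$ does as well, and therefore $u \in (\mathtt{AP}_\Lambda \cap \ell^\infty H^s)(\R)$. This shows the subspace is closed; linearity of the intersection is inherited from $\cT_\Lambda (\R)$ being a vector space together with $\mathtt{AP}_\Lambda (\R)$ and $\ell^\infty H^s (\R)$ being vector spaces. As a by-product, the spectrum containment $\sigma(u) \subseteq \Lambda$ is automatic from membership in $\mathtt{AP}_\Lambda (\R)$ and the cited continuity properties of the spectrum.

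The argument is essentially soft and I do not expect a genuine obstacle; the only point requiring attention is that the regularity threshold $s > \tfrac12$ is exactly what makes the embedding into $C^0 (\R)$ — and hence into $L^\infty_x (\R)$, the topology defining $\mathtt{AP}_\Lambda (\R)$ — available, which is why the lemma is stated at this regularity. If one wished to avoid even citing completeness of $\ell^\infty H^s (\R)$, one could instead phrase the conclusion as: the closure of $(\mathtt{AP}_\Lambda \cap \ell^\infty H^s)(\R)$ inside $\ell^\infty H^s (\R)$ is again contained in $\mathtt{AP}_\Lambda (\R)$, by the same uniform-convergence argument.
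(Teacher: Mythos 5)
Your proof is correct and is essentially the argument the paper intends: the paper gives no separate proof of the lemma but notes in the preceding sentence that, by the Sobolev embedding \eqref{sobolev}, the $\ell^\infty H^s$-topology dominates the uniform topology for $s > \tfrac12$, whence $\mathtt{AP}_\Lambda(\R)$ (being $L^\infty$-closed by definition) intersected with $\ell^\infty H^s(\R)$ is closed. You have simply spelled out that implicit one-line justification, including the minor point that the $\ell^\infty H^s$- and uniform limits agree.
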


Since every finite trigonometric polynomial is a smooth bounded function, $\cT_\Lambda (\R) \subseteq C^\infty (\R)$, it follows from construction of $\mathtt{AP}_\Lambda (\R)$ and Lemma \ref{lem:complete} that every almost periodic initial data $u_0 \in (\mathtt{AP}_\Lambda \cap \ell^\infty H^s) (\R)$ can be approximated in the $\ell^\infty H^s$-topology by finite trigonometric polynomials $\{u_{0j}\}_j \subseteq \cT_\Lambda (\R)$. By \eqref{ctsdependence} the continuous dependence on data from Theorem \ref{thm:lwp}, i.e. 
	\[
		\lim_{j \to \infty}||u - u_j||_{C^0_t (\ell^\infty H^s)_x} = 0 \qquad \text{as } \lim_{j \to \infty} ||u_0 - u_{0j}|| = 0,
	\]
and Lemma \ref{lem:complete}, it suffices to prove Theorem \ref{thm:AP} for solutions arising from finite trigonometric initial data. Our key observation is that the Picard iteration scheme, initialised by  data $u_0 \in \cT_\Lambda (\R)$, reduces to a solving finite systems of ODEs in the Fourier coefficients, since both the linear flow \eqref{lindispersive} and the non-linearity in \eqref{dispersive} preserve $\cT_\Lambda (\R)$, 

\begin{lemma}[Operations preserving $\cT_\Lambda (\R)$]
	Let $\Lambda \subseteq \R$ be a set of frequencies forming a $\Z$-module, and fix trigonometric polynomials $u, v \in \cT_\Lambda (\R)$ given by 
		\[
			u(x) = \sum_{\lambda \in \Lambda} a_\lambda e^{i \lambda x}, \qquad v(x) = \sum_{\mu \in \Lambda} b_\mu e^{i \mu x}. 
		\]
	The class of finite trigonometric polynomials $\cT_\Lambda (\R)$ is closed under the following operators:
	\begin{enumerate}
		\item complex conjugation, 
			\[
			\overline u(x) = \sum_{\lambda \in \Lambda} \overline{a_\lambda} e^{- i \lambda x},
			\]
		
		\item differentiation, 
			\[
				(\partial_x u)(x) 
					= \sum_{\lambda \in \Lambda} \lambda a_\lambda  e^{i \lambda x}, 
			\]
		
		\item \label{eq:convolve}convolution, 
			\[
				(\phi * u)(x) 
					= \sum_{\lambda \in \Lambda} \widehat{\phi} (\lambda) a_\lambda e^{i \lambda x},
			\]
		
		\item multiplication, 
			\[
				(uv)(x)
					= \sum_{\lambda,\mu \in \Lambda} a_\lambda b_\mu e^{i (\lambda + \mu) x}. 
			\]
	\end{enumerate}
	In view of \eqref{convolve}, $\cT_\Lambda (\R)$ is also closed under the action of Fourier multipliers, e.g. the linear propagator $e^{-t \sfA(\partial_x)}$ and the Littlewood-Paley projection $P_N$. 
\end{lemma}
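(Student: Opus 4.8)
The plan is to verify each of the four closure properties by a direct computation at the level of Fourier series, using only that $\Lambda$ is a $\Z$-module — closed under addition and subtraction, containing $0$, and hence symmetric, $\Lambda = -\Lambda$ — and that $\cT_\Lambda(\R)$ consists of \emph{finite} linear combinations of characters $e^{i\lambda x}$ with $\lambda \in \Lambda$. In each case one writes out the action of the operator on such a finite combination and checks that the output is again a finite combination of characters with frequencies in $\Lambda$.

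First I would dispose of conjugation and differentiation, which are immediate: $\overline u(x) = \sum_{\lambda \in \Lambda} \overline{a_\lambda}\, e^{-i\lambda x}$ has frequencies $-\lambda \in \Lambda$, while $(\partial_x u)(x) = \sum_{\lambda \in \Lambda} i\lambda\, a_\lambda\, e^{i\lambda x}$ leaves the frequency support unchanged; in both cases the index set remains finite. For multiplication I would expand $(uv)(x) = \sum_{\lambda,\mu \in \Lambda} a_\lambda b_\mu\, e^{i(\lambda+\mu)x}$; since $\lambda + \mu \in \Lambda$ and the double sum ranges over a finite set, collecting the terms with equal $\lambda + \mu$ produces an element of $\cT_\Lambda(\R)$. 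For convolution against a suitable kernel $\phi$ — a Schwartz function, or more generally the convolution kernel of a bounded Fourier multiplier — I would use that every character is an eigenfunction, $(\phi * e^{i\lambda \cdot})(x) = \widehat\phi(\lambda)\, e^{i\lambda x}$, so by linearity $(\phi * u)(x) = \sum_{\lambda \in \Lambda} \widehat\phi(\lambda)\, a_\lambda\, e^{i\lambda x}$, which is again finitely supported in $\Lambda$. The final assertion follows by taking $\phi$ to be the kernel of $e^{-t\sfA(\partial_x)}$ or of $P_N$; equivalently, read at the level of symbols, any Fourier multiplier $\mathsf m(\partial_x)$ sends $e^{i\lambda x}$ to $\mathsf m(\lambda)\, e^{i\lambda x}$, so it only rescales the Fourier coefficients without moving frequencies.

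There is no substantive obstacle in this lemma; the only minor point worth flagging is that $\sfA(\xi)$ is a priori defined only away from the origin, so if $0 \in \Lambda$ one should note that the zero mode is handled by the usual convention for the low-frequency part of the propagator (and in every standard example within the scope of \eqref{groupvelocity} the symbol extends continuously across the origin anyway). Beyond that remark, the proof is routine, and I would present it as essentially the three displayed identities above.
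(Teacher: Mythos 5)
Your proposal is correct and takes the same route the paper does: the lemma is stated with the four displayed identities serving as their own verification, and the paper offers no further argument. Your remark about the zero mode is a reasonable sanity check, and incidentally your formula for differentiation, $(\partial_x u)(x) = \sum_\lambda i\lambda\, a_\lambda e^{i\lambda x}$, correctly includes the factor of $i$ that the paper's displayed statement drops.
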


\begin{proposition}[Abstract propagation of almost periodicity scheme] \label{prop:AP}
	Let $\Lambda \subseteq \R$ be a set of frequencies forming a $\Z$-module, and suppose $\cB: \ell^\infty H^s(\R^d) \to \ell^\infty H^s(\R^d)$ is a non-linear operator satisfying the hypotheses \eqref{local1}-\eqref{local2} of Lemma \ref{lem:iteration} and furthermore preserves $\cT_\Lambda (\R)$, i.e. $\cB(\cT_\Lambda (\R)) \subseteq \cT_\Lambda (\R)$. Then the solution to the initial data problem 
	\begin{equation}\label{eq:general2}
		\begin{split}
			\partial_t u + \sfA(\nabla) u 
				&= \cB(u),\\
			u_{|t = 0}
				&= u_0,
		\end{split}
	\end{equation}
	for almost periodic data $u_0 \in (\mathtt{AP}_\Lambda \cap \ell^\infty H^s) (\R)$ is also almost periodic $u \in C^0_t (\mathtt{AP}_\Lambda \cap \ell^\infty H^s)_x ([0, T] \times \R)$.
\end{proposition}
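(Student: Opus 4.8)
The plan is to run the Picard iteration of Lemma~\ref{lem:iteration} \emph{inside} the closed subspace of $\ell^\infty H^s(\R)$ spanned by finite trigonometric polynomials (throughout, $s > \tfrac12$, so that Lemma~\ref{lem:complete} is available). Set $V := \overline{\cT_\Lambda(\R)}^{\,\ell^\infty H^s}$, the closure of $\cT_\Lambda(\R)$ in the $\ell^\infty H^s$-norm. Since $\cT_\Lambda(\R) \subseteq (\mathtt{AP}_\Lambda\cap\ell^\infty H^s)(\R)$ and the latter is closed in $\ell^\infty H^s(\R)$ by Lemma~\ref{lem:complete}, we have $V \subseteq (\mathtt{AP}_\Lambda\cap\ell^\infty H^s)(\R)$; moreover equality holds because finite trigonometric polynomials are $\ell^\infty H^s$-dense in the almost periodic functions of that regularity (see the discussion following Lemma~\ref{lem:complete}), so in particular $u_0 \in V$. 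The strategy is to show that the Duhamel map $\Phi(v)(t) = e^{-t\sfA(\nabla)}u_0 + \int_0^t e^{-(t-t')\sfA(\nabla)}\cB(v(t'))\,dt'$ from the proof of Lemma~\ref{lem:iteration} sends $V$-valued paths to $V$-valued paths. Granting this, $\Phi$ restricts to a contraction on $B_R \cap C^0_t(V)_x$, which is a nonempty (it contains the constant path $t\mapsto u_0$) closed subset of the complete space $B_R \subseteq C^0_t(\ell^\infty H^s)_x$; its fixed point there coincides, by uniqueness of the fixed point of $\Phi$ in $B_R$, with the solution $u$ furnished by Lemma~\ref{lem:iteration}. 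Hence $u \in C^0_t(V)_x = C^0_t(\mathtt{AP}_\Lambda\cap\ell^\infty H^s)_x([0,T]\times\R)$, which is the assertion.

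There are then three preservation properties to check. (i) The propagator $e^{-t\sfA(\nabla)}$ is a Fourier multiplier, acting on $e^{i\lambda x}$ by the scalar $e^{-t\sfA(\lambda)}$, so it maps $\cT_\Lambda(\R)$ into $\cT_\Lambda(\R)$ by the lemma on operations preserving $\cT_\Lambda(\R)$; it is also bounded, hence continuous, on $\ell^\infty H^s(\R)$ by the linear propagator bound~\eqref{linearenergy}. A continuous map carrying $\cT_\Lambda(\R)$ into $V$ carries $V=\overline{\cT_\Lambda(\R)}$ into $V$, so $e^{-t\sfA(\nabla)}(V)\subseteq V$. (ii) By hypothesis $\cB$ maps $\cT_\Lambda(\R)$ into $\cT_\Lambda(\R)\subseteq V$, and by~\eqref{local2} it is locally Lipschitz, hence continuous, on $\ell^\infty H^s(\R)$; the same reasoning gives $\cB(V)\subseteq V$. (iii) If $w:[0,t]\to\ell^\infty H^s(\R)$ is continuous with $w(t')\in V$ for every $t'$, then $\int_0^t w(t')\,dt'$ is a limit of Riemann sums, each lying in the subspace $V$, and therefore lies in the closed set $V$. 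Applying (iii) to $w(t') := e^{-(t-t')\sfA(\nabla)}\cB(v(t'))$---which by (i) and (ii) is $V$-valued and continuous whenever $v\in C^0_t(V)_x$---and combining with $u_0\in V$ shows $\Phi(v)(t)\in V$, as required.

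The one genuinely non-routine ingredient is the identification $V=(\mathtt{AP}_\Lambda\cap\ell^\infty H^s)(\R)$, that is, the density of finite trigonometric polynomials in the \emph{$\ell^\infty H^s$-topology}---rather than merely the uniform topology that is built into the definition of $\mathtt{AP}_\Lambda(\R)$---which is the content of (the proof of) Lemma~\ref{lem:complete} and is where a Bochner--Fej\'er type summation argument is needed; once it is granted, all the remaining steps are soft functional analysis on the fixed-point scheme. Behind the abstract argument lies a concrete picture worth keeping in mind: for genuinely finite trigonometric data $u_0\in\cT_\Lambda(\R)$ and a polynomial-type nonlinearity $\cB$ (as in every application), an induction shows each Picard iterate $u^{(n)}(t)$ is an honest finite trigonometric polynomial whose frequencies lie in the fixed finite set of $(\deg\cB)^{\,n}$-fold sums of elements of $\sigma(u_0)\cup(-\sigma(u_0))$---the propagator and $\cB$ only act on and generate finitely many frequencies at each stage, and the Duhamel integral of a path confined to a fixed finite-dimensional span of exponentials stays in that span---so the scheme literally reduces to a finite coupled system of ODEs for the Fourier coefficients, of which $u(t)\in V$ is the $\ell^\infty H^s$-limit.
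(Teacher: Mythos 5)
Your proof is correct and takes essentially the same approach as the paper's: both amount to observing that the Picard/Duhamel iteration stays in the class of almost periodic functions (because the propagator, the nonlinearity, and integration in time all preserve $\cT_\Lambda(\R)$) and then passing to the limit using the closedness of $(\mathtt{AP}_\Lambda\cap\ell^\infty H^s)(\R)$ from Lemma~\ref{lem:complete}. The only difference is presentational---you run the contraction directly inside the closed invariant subspace $V=\overline{\cT_\Lambda(\R)}^{\,\ell^\infty H^s}$, whereas the paper first reduces to finite trigonometric data via Lipschitz dependence and then checks by induction that each iterate is a finite trigonometric polynomial---and both hinge on the same slightly-glossed-over $\ell^\infty H^s$-density of $\cT_\Lambda(\R)$ in $(\mathtt{AP}_\Lambda\cap\ell^\infty H^s)(\R)$, which you are right to flag explicitly.
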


\begin{proof}
	Since the initial data problem for \eqref{general2} is continuous with respect to the $\ell^\infty H^s$-topology, it suffices to prove the result for initial data which is a finite trigonometric polynomial $u_0 \in \cT_\Lambda (\R)$. Implicit in the proof of local well-posedness for \eqref{general2} in Lemma \ref{lem:iteration}, the solutions are constructed via Picard iteration. That is, initialising by $u^{(0)} \equiv u_0$, we inductively construct solutions $\{ u^{(n)}\}_n \subseteq C^0_t (\ell^\infty H^s)_x ([0, T] \times \R)$ to the linear equation
		\begin{equation}\label{eq:iterate}
			\begin{split}
			\partial_t u^{(n + 1)} + \sfA(\nabla) u^{(n + 1)} 
				&= \cB(u^{(n)}),\\
			u^{(n + 1)}_{|t = 0}
				&= u_0.
			\end{split}	
		\end{equation}
	By the contraction mapping principle these iterates converge to the solution $u$ of \eqref{general2} in the $C^0_t (\ell^\infty H^s)_x$-topology. To conclude the result, it suffices to show that the iterates consist of finite trigonometric polynomials $\{u^{(n)}\}_n \subseteq C^0_t (\cT_\Lambda)_x ([0, T] \times \R)$ since $(\mathtt{AP}_\Lambda \cap \ell^\infty H^s) (\R)$ forms a closed subspace of $\ell^\infty H^s (\R)$.
	
	Suppose for induction that the $n$-th Picard iterate $u^{(n)} \in C^0_t (\cT_\Lambda)_x ([0, T] \times \R)$. Since the $(n + 1)$-th Picard iterate $u^{(n + 1)}$ satisfies the linear equation \eqref{iterate}, we can compute it explicitly via Duhamel's formula
		\begin{equation}\label{eq:duhamel2}
			u^{(n + 1)} (t, x)
				= e^{- t \sfA(\partial_x)} u_0 + \int_0^t e^{- (t - t') \sfA(\partial_x)} \cB(u^{(n)}(t')) \, dt'.
		\end{equation}
	Since both the linear propagator and the non-linearity preserve the class of finite trigonometric polynomials, $e^{-t \sfA(\partial_x)} (\cT_\Lambda (\R)) \subseteq \cT_\Lambda (\R)$ and $\cB(\cT_\Lambda (\R)) \subseteq \cT_\Lambda (\R)$, we can compute from Duhamel's formula \eqref{duhamel2} that the $(n + 1)$-the Picard iterate is a finite trigonometric polynomial $u^{(n + 1)} \in C^0_t (\cT_\Lambda)_x ([0, T] \times \R)$. 
\end{proof}

\begin{remark}
	If one keeps careful track of the Fourier coefficients for the Picard iterates, we can in fact show that they are finite trigonometric polynomials in space-time. 
\end{remark}

We are now ready to conclude Theorem \ref{thm:AP}. When $\cQ(u) \equiv 0$ in the equation \eqref{dispersive}, the result follows immediately from Proposition \ref{prop:AP}. In the derivative non-linearity case $\cQ(u) \not\equiv 0$, recall from Section \ref{sec:existence} that the solution $u$ to the equation \eqref{dispersive} was constructed as the limit of approximate solutions $u^{(M)}$ to the regularised equation \eqref{galerkin} which we recollect below, 
	\begin{align*}
		\partial_t u^{(M)} + \sfA(\nabla) u^{(M)} + \cQ(u^{(M)})  \nabla \Big( P_{\leq M} P_{\leq M} u^{(M)} \Big) 
			&= \cN(u^{(M)}),\\
		u^{(M)}_{|t = 0}
			&= u_0. 	
	\end{align*}
The non-linearity preserves $\cT_\Lambda (\R)$, so the solutions $u^{(M)}$ are almost periodic by Proposition \ref{prop:AP}. Interpolating between the uniform bounds \eqref{uniformM} and the $\ell^\infty H^0$-difference bounds \eqref{cauchy}, the approximate solutions converge in the $C^0_t (\ell^\infty H^{s-})_x$-topology to the solution $u$, so it is almost periodic by Lemma \ref{lem:complete}. 

\bibliographystyle{alpha}
\bibliography{external/biblio}

\begin{thebibliography}{{Har}15b}

\bibitem[ABFS89]{AbdelouhabEtAl1989}
L.~Abdelouhab, J.~L. Bona, M.~Felland, and J.~C. Saut.
\newblock Nonlocal models for nonlinear, dispersive waves.
\newblock {\em Physica D: Nonlinear Phenomena}, 40(3):360--392, December 1989.

\bibitem[BDGL18]{BinderEtAl2018}
Ilia Binder, David Damanik, Michael Goldstein, and Milivoje Lukic.
\newblock Almost periodicity in time of solutions of the {{KdV}} equation.
\newblock {\em Duke Mathematical Journal}, 167(14), October 2018.

\bibitem[BdME97]{Monvel}
Anne Boutet~de Monvel and Ira Egorova.
\newblock On solutions of nonlinear {S}chr\"odinger equations with
  {C}antor-type spectrum.
\newblock {\em J. Anal. Math.}, 72:1--20, 1997.

\bibitem[Bes55]{MR68029}
A.~S. Besicovitch.
\newblock {\em Almost periodic functions}.
\newblock Dover Publications, Inc., New York, 1955.

\bibitem[Boh25]{Bohr1925}
Harald Bohr.
\newblock Zur theorie der fast periodischen funktionen: {{I}}. {{Eine}}
  verallgemeinerung der theorie der fourierreihen.
\newblock {\em Acta Mathematica}, 45(none):29--127, January 1925.

\bibitem[Bou93]{Bourgain1993}
J.~Bourgain.
\newblock {Fourier Transform Restriction Phenomena for Certain Lattice Subsets
  and Applications to Nonlinear Evolution Equations, Part II: The KDV
  Equation.}
\newblock {\em Geometric and functional analysis}, 3(3):209--262, 1993.

\bibitem[BS76]{BonaScott1976}
Jerry Bona and Ridgway Scott.
\newblock Solutions of the {{Korteweg-de Vries}} equation in fractional order
  {{Sobolev}} spaces.
\newblock {\em Duke Mathematical Journal}, 43(1):87--99, March 1976.

\bibitem[BS07]{BethuelSmets2007}
Fabrice Bethuel and Didier Smets.
\newblock A remark on the {{Cauchy}} problem for the {{2D Gross-Pitaevskii}}
  equation with nonzero degree at infinity.
\newblock {\em Differential and Integral Equations}, 20(3):325--338, January
  2007.

\bibitem[CKV24]{ChapoutoEtAl2024}
Andreia Chapouto, Rowan Killip, and Monica Vi{\c s}an.
\newblock Bounded solutions of {{KdV}}: {{Uniqueness}} and the loss of almost
  periodicity.
\newblock {\em Duke Mathematical Journal}, -1(-1):1--41, January 2024.

\bibitem[Cor09]{Corduneanu2009}
Constantin Corduneanu.
\newblock {\em Almost {{Periodic Oscillations}} and {{Waves}}}.
\newblock Springer, New York, NY, 2009.

\bibitem[Dei08]{Deift2008}
Percy Deift.
\newblock Some open problems in random matrix theory and the theory of
  integrable systems.
\newblock In J.~Baik, T.~Kriecherbauer, L-C. Li, K.~T-R McLaughlin, and
  C.~Tomei, editors, {\em Integrable Systems and Random Matrices}, Contemporary
  {{Mathematics}} 458, pages 419--430. AMS, Providence, Rhode Island, 2008.

\bibitem[Dei17]{Deift2017}
Percy Deift.
\newblock Some {{Open Problems}} in {{Random Matrix Theory}} and the {{Theory}}
  of {{Integrable Systems}}. {{II}}.
\newblock {\em Symmetry, Integrability and Geometry: Methods and Applications},
  March 2017.

\bibitem[DG15]{DamanikGoldstein2015}
David Damanik and Michael Goldstein.
\newblock On the existence and uniqueness of global solutions for the {{KdV}}
  equation with quasi-periodic initial data.
\newblock {\em Journal of the American Mathematical Society}, 29(3):825--856,
  June 2015.

\bibitem[DSS20]{DodsonEtAl2020}
Benjamin Dodson, Avraham Soffer, and Thomas Spencer.
\newblock The {{Nonlinear Schr{\"o}dinger Equation}} on {{Z}} and {{R}} with
  {{Bounded Initial Data}}: {{Examples}} and {{Conjectures}}.
\newblock {\em Journal of Statistical Physics}, 180(1):910--934, September
  2020.

\bibitem[Ego94]{Egorova1994}
I.~E. Egorova.
\newblock The {C}auchy problem for the {K}d{V} equation with almost periodic
  initial data whose spectrum is nowhere dense.
\newblock In {\em Spectral operator theory and related topics}, volume~19 of
  {\em Adv. Soviet Math.}, pages 181--208. Amer. Math. Soc., Providence, RI,
  1994.

\bibitem[Gal05]{Gallo2005}
Cl{\'e}ment Gallo.
\newblock Korteweg-de {{Vries}} and {{Benjamin-Ono}} equations on {{Zhidkov}}
  spaces.
\newblock {\em Advances in Differential Equations}, 10(3), January 2005.

\bibitem[{Har}15a]{Harrop-Griffiths2015}
Benjamin {Harrop-Griffiths}.
\newblock Large data local well-posedness for a class of {{KdV-type}}
  equations.
\newblock {\em Transactions of the American Mathematical Society},
  367(2):755--773, February 2015.

\bibitem[{Har}15b]{Harrop-Griffiths2015a}
Benjamin {Harrop-Griffiths}.
\newblock Large {{Data Local Well-Posedness}} for a {{Class}} of {{KdV-Type
  Equations II}}.
\newblock {\em International Mathematics Research Notices},
  2015(18):8590--8619, January 2015.

\bibitem[ILS98]{IorioEtAl1998}
R.~Iorio, F.~Linares, and M.~Scialom.
\newblock {{KdV}} and {{BO}} equations with bore-like data.
\newblock {\em Differential and Integral Equations}, 11(6):895--915, January
  1998.

\bibitem[IT22]{IfrimTataru2022b}
Mihaela Ifrim and Daniel Tataru.
\newblock Local well-posedness for quasi-linear problems: {{A}} primer.
\newblock {\em Bulletin of the American Mathematical Society}, 60(2):167--194,
  July 2022.

\bibitem[Kat93]{Kato1993}
Tosio Kato.
\newblock Abstract evolution equations, linear and quasilinear, revisited.
\newblock In Hikosaburo Komatsu, editor, {\em Functional {{Analysis}} and
  {{Related Topics}}, 1991}, pages 103--125, Berlin, Heidelberg, 1993.
  Springer.

\bibitem[Kat04]{Katznelson2004}
Yitzhak Katznelson.
\newblock {\em An {{Introduction}} to {{Harmonic Analysis}}}.
\newblock Cambridge University Press, January 2004.

\bibitem[KT06]{KappelerTopalov2006}
T.~Kappeler and P.~Topalov.
\newblock Global wellposedness of {{KdV}} in {$H^{-1} (\mathbb T, \mathbb R)$}.
\newblock {\em Duke Mathematical Journal}, 135(2):327--360, November 2006.

\bibitem[Lau23]{Laurens2023}
Thierry Laurens.
\newblock Global {{Well-Posedness}} for {$H^{-1} (\mathbb R)$}
  {{Perturbations}} of {{KdV}} with {{Exotic Spatial Asymptotics}}.
\newblock {\em Communications in Mathematical Physics}, 397(3):1387--1439,
  February 2023.

\bibitem[MMT12]{MarzuolaEtAl2012}
Jeremy~L. Marzuola, Jason Metcalfe, and Daniel Tataru.
\newblock Quasilinear {{Schr{\"o}dinger}} equations {{I}}: {{Small}} data and
  quadratic interactions.
\newblock {\em Advances in Mathematics}, 231(2):1151--1172, October 2012.

\bibitem[MT76]{McKeanTrubowitz1976}
H.~P. McKean and E.~Trubowitz.
\newblock Hill's operator and hyperelliptic function theory in the presence of
  infinitely many branch points.
\newblock {\em Communications on Pure and Applied Mathematics}, 29(2):143--226,
  1976.

\bibitem[Oh14]{Oh2014}
Tadahiro Oh.
\newblock Global existence for the defocusing nonlinear {{Schr{\"o}dinger}}
  equations with limit periodic initial data.
\newblock {\em Communications on Pure and Applied Analysis}, 14(4):1563--1580,
  Tue Apr 01 00:00:00 UTC 2014.

\bibitem[Oh15]{Oh2015a}
Tadahiro Oh.
\newblock On {{Nonlinear Schr{\"o}dinger Equations}} with {{Almost Periodic
  Initial Data}}.
\newblock {\em SIAM Journal on Mathematical Analysis}, 47(2):1253--1270,
  January 2015.

\bibitem[Pal22]{Palacios2022}
Jos{\'e}~Manuel Palacios.
\newblock Local well-posedness for the {{gKdV}} equation on the background of a
  bounded function.
\newblock {\em Revista Matem{\'a}tica Iberoamericana}, 39(1):341--396, May
  2022.

\bibitem[Pap24]{Papenburg2024}
Hagen Papenburg.
\newblock Local {{Wellposedness}} of dispersive equations with quasi-periodic
  initial data, February 2024.

\bibitem[Sch24]{Schippa2024}
Robert Schippa.
\newblock Strichartz estimates for quasi-periodic functions and applications,
  July 2024.

\bibitem[Tao01]{Tao2001}
Terence Tao.
\newblock Global {{Regularity}} of {{Wave Maps II}}. {{Small Energy}} in {{Two
  Dimensions}}.
\newblock {\em Communications in Mathematical Physics}, 224(2):443--544,
  December 2001.

\bibitem[Tao06]{Tao2006}
Terence Tao.
\newblock {\em Nonlinear {{Dispersive Equations}}: {{Local}} and {{Global
  Analysis}}}.
\newblock {American Mathematical Soc.}, 2006.

\bibitem[Tay11]{Taylor2011b}
Michael~E. Taylor.
\newblock {\em Partial {{Differential Equations III}}: {{Nonlinear
  Equations}}}, volume 117 of {\em Applied {{Mathematical Sciences}}}.
\newblock Springer, New York, NY, 2011.

\bibitem[TF80]{TsutsumiFukuda1980}
Masayoshi Tsutsumi and Isamu Fukuda.
\newblock On {{Solutions}} of the {{Derivative Nonlinear Schrodinger
  Equation}}. {{Existence}} and {{Uniqueness Theorem}}.
\newblock {\em Funkcialaj Ekvacioj}, 23, 1980.

\bibitem[Tsu12]{Tsugawa2012}
Kotaro Tsugawa.
\newblock Local {{Well-Posedness}} of the {{KdV Equation}} with
  {{Quasi-Periodic Initial Data}}.
\newblock {\em SIAM Journal on Mathematical Analysis}, 44(5):3412--3428,
  January 2012.

\end{thebibliography}

\end{document}